\title[\tiny Deformation of proper actions on reductive homogeneous spaces]{Deformation of proper actions on reductive homogeneous spaces}
\author{Fanny Kassel}
\address{D\'epartement de Math\'ematiques,
B\^atiment~425,
Facult\'e des Sciences d'Orsay,
Universit\'e Paris-Sud 11,
91405 Orsay Cedex,
France}
\email{fanny.kassel@math.u-psud.fr}
\theoremstyle{plain}
\newtheorem{prop}{Proposition}[section]
\newtheorem{theo}[prop]{Theorem}
\newtheorem{coro}[prop]{Corollary}
\newtheorem{lem}[prop]{Lemma}
\theoremstyle{definition}
\newtheorem{rema}[prop]{Remark}
\newcommand{\C}{\mathbb{C}}
\newcommand{\R}{\mathbb{R}}
\newcommand{\Q}{\mathbb{Q}}
\newcommand{\Z}{\mathbb{Z}}
\newcommand{\N}{\mathbb{N}}
\newcommand{\F}{\mathbb{F}}
\newcommand{\kkk}{\mathbf{k}}
\newcommand{\GLnC}{\mathrm{GL}_n(\mathbb{C})}
\newcommand{\GLnR}{\mathrm{GL}_n(\mathbb{R})}
\newcommand{\SLn}{\mathrm{SL}_n}
\newcommand{\SL}{\mathrm{SL}}
\newcommand{\PGL}{\mathrm{PGL}}
\newcommand{\SO}{\mathrm{SO}}
\newcommand{\SU}{\mathrm{SU}}
\newcommand{\U}{\mathrm{U}}
\newcommand{\Sp}{\mathrm{Sp}}
\newcommand{\M}{\mathrm{M}}
\newcommand{\g}{\mathfrak{g}}
\newcommand{\p}{\mathfrak{p}}
\newcommand{\n}{\mathfrak{n}}
\newcommand{\aaa}{\mathfrak{a}}
\newcommand{\z}{\mathfrak{z}}
\newcommand{\so}{\mathfrak{so}}
\newcommand{\uu}{\mathfrak{u}}
\newcommand{\HH}{\mathbb{H}}
\newcommand{\PP}{\mathbb{P}}
\newcommand{\A}{\mathbb{A}}
\newcommand{\D}{\mathcal{D}}
\newcommand{\Hom}{\mathrm{Hom}}
\newcommand{\End}{\mathrm{End}}
\newcommand{\Int}{\mathrm{Int}}
\newcommand{\diag}{\mathrm{diag}}
\DeclareMathOperator{\Res}{Res}
\DeclareMathOperator{\Ad}{Ad}
\DeclareMathOperator{\ad}{ad}
\DeclareMathOperator{\pr}{pr}
\begin{document}
\maketitle
\numberwithin{equation}{section}
\numberwithin{table}{section}

\begin{abstract}
Let $G$ be a real reductive Lie group and $H$ a closed reductive subgroup of~$G$.
We investigate the deformation of ``standard'' compact quotients of~$G/H$, \textit{i.e.}, of quotients of~$G/H$ by discrete subgroups~$\Gamma$ of~$G$ that are uniform lattices in a closed reductive subgroup~$L$ of~$G$ acting properly and cocompactly on~$G/H$.
For $L$ of real rank~$1$, we prove that after a small deformation in~$G$, such a group~$\Gamma$ remains discrete in~$G$ and its action on~$G/H$ remains properly discontinuous and cocompact.
More generally, we prove that the properness of the action of any convex cocompact subgroup of~$L$ on~$G/H$ is preserved under small deformations, and we extend this result to reductive homogeneous spaces~$G/H$ over any local field.
As an application, we obtain compact quotients of $\SO(2n,2)/\U(n,1)$ by Zariski-dense discrete subgroups of~$\SO(2n,2)$ acting properly discontinuously.
\vspace{0.2cm}
\end{abstract}

\section{Introduction}

Let $G$ be a real connected reductive linear Lie group and $H$ a closed connected reductive subgroup of~$G$.
We are interested in the compact quotients of~$G/H$ by discrete subgroups~$\Gamma$ of~$G$.
We ask that the action of~$\Gamma$ on~$G/H$ be properly discontinuous in order for the quotient $\Gamma\backslash G/H$ to be Hausdorff.
This imposes strong restrictions on~$\Gamma$ when $H$ is noncompact.
For instance, if $\mathrm{rank}_{\R}(G)=\mathrm{rank}_{\R}(H)$, then all discrete subgroups of~$G$ acting properly discontinuously on~$G/H$ are finite: this is the Calabi-Markus phenomenon \cite{kob89}.
Usually the action of~$\Gamma$ on~$G/H$ is also required to be free, so that $\Gamma\backslash G/H$ be a manifold, but this condition is not very restrictive: if $\Gamma$ acts properly discontinuously and cocompactly on~$G/H$, then it is finitely generated, hence virtually torsion-free by Selberg's lemma~\cite{sel}.

In this paper we investigate the deformation of compact quotients $\Gamma\backslash G/H$ in the important case when $\Gamma$ is ``standard'', \textit{i.e.}, when $\Gamma$ is a uniform lattice in some closed reductive subgroup~$L$ of~$G$ acting properly and cocompactly on~$G/H$.
Most of our results hold for reductive homogeneous spaces over any local field, but in this introduction we first consider the real case.

\subsection{Deformation of compact quotients in the real case}

Let $G$ be a real reductive linear Lie group and $H$ a closed reductive subgroup of~$G$.
In all known examples, if $G/H$ admits a compact quotient, then there is a closed reductive subgroup~$L$ of~$G$ that acts properly and cocompactly on~$G/H$.
For instance, $L=\U(n,1)$ acts properly and transitively on the $(2n+1)$-dimensional anti-de Sitter space $G/H=\SO(2n,2)/\SO(2n,1)$ (see Section~\ref{quotients compacts Zariski-denses}).
Any torsion-free uniform lattice~$\Gamma$ of such a group~$L$ acts properly discontinuously, freely, and cocompactly on~$G/H$; we will say that the corresponding compact quotient $\Gamma\backslash G/H$ is \textit{standard}.
Note that $L$ always admits torsion-free uniform lattices by~\cite{bor63}.
Kobayashi and Yoshino conjectured that any reductive homogeneous space~$G/H$ admitting compact quotients admits standard ones (\cite{ky}, Conj.~3.3.10); this conjecture remains open.

Of course, nonstandard compact quotients may also exist: this is the case for instance for $G/H=(G_0\times G_0)/\Delta_{G_0}$, where $G_0$ is any reductive Lie group and $\Delta_{G_0}$ is the diagonal of $G_0\times G_0$ (see \cite{ghy}, \cite{gol}, \cite{kob98}, \cite{sal00}).
But in general we know only standard examples.
In order to construct nonstandard ones, it is natural, given a reductive subgroup~$L$ of~$G$ acting properly and cocompactly on~$G/H$, to slightly deform torsion-free uniform lattices~$\Gamma$ of~$L$ in~$G$ and to see whether they remain discrete in~$G$ and their action on~$G/H$ remains proper, free, and cocompact.

Since our goal is to obtain nonstandard quotients, we are not really interested in trivial deformations of~$\Gamma$, \textit{i.e.}, in deformations by conjugation, for which the quotient $\Gamma\backslash G/H$ will remain standard.
The problem therefore boils down to the case when $L$ is reductive of real rank~$1$.
Indeed, by classical preliminary reductions, that is, considering separately the different irreducible quasifactors of~$\Gamma$ and possibly passing to subgroups of finite index, we may assume that $\Gamma$ is irreducible and that $L$ has no compact factor.
If $L$ is semisimple of real rank~$\geq 2$, then Margulis's superrigidity theorem implies that $\Gamma$ is locally rigid in~$G$ (\cite{mar91}, Cor.~IX.5.9).

We prove the following result.

\begin{theo}\label{proprete, groupes de Lie}
Let $G$ be a real reductive linear Lie group, $H$ and~$L$ two closed reductive subgroups of~$G$.
Assume that $\mathrm{rank}_{\R}(L)=1$ and that $L$ acts properly and cocompactly on~$G/H$.
For any torsion-free uniform lattice~$\Gamma$ of~$L$, there is a neighborhood~$\mathcal{U}\subset\Hom(\Gamma,G)$ of the natural inclusion such that any $\varphi\in\mathcal{U}$ is injective and $\varphi(\Gamma)$ is discrete in~$G$, acting properly discontinuously and cocompactly on~$G/H$.
\end{theo}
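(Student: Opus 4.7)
The plan is to combine three ingredients: (i)~preservation of properness under small deformation, which is the main technical result announced in the abstract; (ii)~deduction of injectivity and discreteness from properness together with torsion-freeness of~$\Gamma$; and (iii)~an Ehresmann--Thurston-type argument for cocompactness.

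For~(i), since $\mathrm{rank}_{\R}(L)=1$, any uniform lattice $\Gamma\subset L$ is convex cocompact in~$L$: it acts cocompactly by isometries on the (negatively curved) Riemannian symmetric space of~$L$. The paper's deformation theorem for properness of convex cocompact actions on~$G/H$ then provides a neighborhood $\mathcal{U}_1\subset\Hom(\Gamma,G)$ of the inclusion such that for every $\varphi\in\mathcal{U}_1$, the group $\varphi(\Gamma)$ acts properly discontinuously on~$G/H$. For~(ii), suppose to the contrary that infinitely many distinct $\gamma_n\in\Gamma$ satisfy $\varphi(\gamma_n)\to g$ in~$G$; choosing a compact $K\subset G/H$ containing both $eH$ and $g\cdot eH$, one would have $\varphi(\gamma_n)K\cap K\neq\emptyset$ for all large~$n$, contradicting proper discontinuity. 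Hence $\varphi(\Gamma)$ is discrete in~$G$ and $\ker\varphi$ is finite; torsion-freeness of~$\Gamma$ then forces $\varphi$ to be injective.

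For~(iii), the compact quotient $M=\Gamma\backslash G/H$ carries a $(G,G/H)$-structure whose holonomy factors through the inclusion $\Gamma\hookrightarrow G$. By the Ehresmann--Thurston principle, for $\varphi$ in a neighborhood $\mathcal{U}_2$ of the inclusion the $(G,G/H)$-structure on~$M$ deforms continuously to one whose holonomy is obtained by composing the projection $\pi_1(M)\twoheadrightarrow\Gamma$ with~$\varphi$, and the associated developing map $\tilde M\to G/H$ varies continuously and remains a local diffeomorphism. Since $\varphi$ is injective and $\Gamma$ is torsion-free, the action of $\varphi(\Gamma)$ on~$G/H$ is free, so $\varphi(\Gamma)\backslash G/H$ is a Hausdorff manifold and the developing map descends to a local diffeomorphism $f_\varphi\colon M\to\varphi(\Gamma)\backslash G/H$. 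As a continuous map from the compact space~$M$ to a Hausdorff space, $f_\varphi$ is proper; a proper local diffeomorphism is a covering, and connectedness of $\varphi(\Gamma)\backslash G/H$ (inherited from~$G/H$) forces surjectivity. Hence $\varphi(\Gamma)\backslash G/H$ is a compact quotient of~$M$. Setting $\mathcal{U}=\mathcal{U}_1\cap\mathcal{U}_2$ completes the argument.

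The main obstacle is~(i): proving that properness of a convex cocompact action on a reductive homogeneous space is preserved under deformation. This is the substantial technical contribution of the paper. Step~(ii) is essentially formal, and step~(iii) rests on the classical deformation theory of $(G,X)$-structures on compact manifolds, exploiting the proper discontinuity and freeness of $\varphi(\Gamma)$ on $G/H$ furnished by (i) and~(ii).
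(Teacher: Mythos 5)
Your three-step plan is sound, and steps (i) and (ii) essentially reproduce what the paper does, but step (iii) is where you genuinely diverge from the paper, in an interesting way.

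For steps (i) and (ii), your approach parallels the paper's: the deformation theorem for convex cocompact subgroups (Theorem~\ref{proprete, groupes algebriques}) already gives properness, discreteness, \emph{and} injectivity in one stroke, so re-deriving injectivity and discreteness from properness alone as you do in (ii) is both unnecessary and slightly incomplete. The subtle point: if you only know that the \emph{group} $\varphi(\Gamma)$ acts properly discontinuously (that is, $\{g'\in\varphi(\Gamma):g'K\cap K\neq\emptyset\}$ is finite), then infinitely many $\gamma_n\in\ker\varphi$ all map to the identity and your compact-$K$ argument does not give a contradiction; you would get discreteness of $\varphi(\Gamma)$ but not finiteness of $\ker\varphi$. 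The paper circumvents this by working with the Cartan-projection estimate of Theorem~\ref{varphi ne change pas beaucoup mu}, which controls $\|\mu(\varphi(\gamma))\|$ in terms of $\|\mu(\gamma)\|$ and thus bounds $\ker\varphi$ directly. You should either quote the stronger conclusion of Theorem~\ref{proprete, groupes algebriques} (which includes injectivity) or phrase properness as a condition on $\gamma\in\Gamma$ rather than on elements of $\varphi(\Gamma)$.

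For step (iii), your route to cocompactness is genuinely different from the paper's and is a legitimate alternative. The paper instead uses a purely cohomological criterion due to Kobayashi (\cite{kob89}, Cor.~5.5): a torsion-free discrete subgroup of $G$ acting properly discontinuously on $G/H$ does so cocompactly if and only if its cohomological dimension equals $d(G)-d(H)$; since $\varphi$ is injective, $\varphi(\Gamma)\cong\Gamma$ has the same cohomological dimension as $\Gamma$, namely $d(L)=d(G)-d(H)$ by the hypothesis that $L$ acts properly and cocompactly on $G/H$, and cocompactness follows immediately. Your Ehresmann--Thurston argument --- deform the $(G,G/H)$-structure on $M=\Gamma\backslash G/H$, descend the developing map to a local diffeomorphism $f_\varphi\colon M\to\varphi(\Gamma)\backslash G/H$, and use compactness of $M$, openness of the image, and connectedness of $G/H$ to force surjectivity --- is correct but invokes the deformation theory of $(G,X)$-structures, a substantially heavier tool. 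The paper's cohomological argument is shorter, requires no smooth developing-map machinery, and sidesteps the (real but routine) bookkeeping about $\pi_1(M)$ versus $\Gamma$ when $G/H$ is not simply connected. Both routes work; the paper's is the cleaner one in this setting.
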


We denote by $\Hom(\Gamma,G)$ the set of group homomorphisms from~$\Gamma$ to~$G$, endowed with the compact-open topology.
In the real case, the fact that $\varphi(\Gamma)$ remains discrete in~$G$ for $\varphi\in\Hom(\Gamma,G)$ close to the natural inclusion is a general result of Guichard (\cite{gui}, Th.~2).

Theorem~\ref{proprete, groupes de Lie} improves a result of Kobayashi (\cite{kob98}, Th.~2.4), who considered particular homomorphisms of the form $\gamma\mapsto\gamma\psi(\gamma)$, where $\psi : \Gamma\rightarrow\nolinebreak Z_G(L)$ is a homomorphism with values in the centralizer of~$L$ in~$G$.

By~\cite{ky}, Cor.~3.3.7, Theorem~\ref{proprete, groupes de Lie} applies to the following triples~$(G,H,L)$:
\begin{enumerate}
	\item $(\SO(2n,2),\SO(2n,1),\U(n,1))$ for~$n\geq 1$,
	\item $(\SO(2n,2),\U(n,1),\SO(2n,1))$ for~$n\geq 1$,
	\item $(\U(2n,2),\Sp(n,1),\U(2n,1))$ for~$n\geq 1$,
	\item $(\SO(8,8),\SO(8,7),\mathrm{Spin}(8,1))$,
	\item $(\SO(8,\C),\SO(7,\C),\mathrm{Spin}(7,1))$,
	\item $(\SO^{\ast}(8),\U(1,3),\mathrm{Spin}(6,1))$,
	\item $(\SO^{\ast}(8),\mathrm{Spin}(6,1),\U(1,3))$,
	\item $(\SO^{\ast}(8),\SO^{\ast}(6)\times\SO^{\ast}(2),\mathrm{Spin}(6,1))$,
	\item $(\SO(4,4),\mathrm{Spin}(4,3),\SO(4,1))$,
	\item $(\SO(4,3),\mathrm{G}_{2(2)},\SO(4,1))$.
\end{enumerate}

\medskip

As mentioned above, our aim is to deform standard compact quotients of~$G/H$ into nonstandard ones, which are in some sense more generic.
The best that we may hope for is to obtain Zariski-dense discrete subgroups of~$G$ acting properly discontinuously, freely, and cocompactly on~$G/H$.
Of course, even when $L$ has real rank~$1$, nontrivial deformations in~$G$ of uniform lattices~$\Gamma$ of~$L$ do not always exist.
For instance, if $L$ is semisimple, noncompact, with no quasisimple factor locally isomorphic to~$\SO(n,1)$ or~$\SU(n,1)$, then the first cohomology group~$H^1(\Gamma,\g)$ vanishes by~\cite{rag}, Th.~1.
This, together with~\cite{wei64}, implies that $\Gamma$ is locally rigid in~$G$.
(Here $\g$ denotes the Lie algebra of~$G$.)

For $(G,H,L)=(\SO(2n,2),\SO(2n,1),\U(n,1))$ with $n\geq 2$, uniform lattices~$\Gamma$ of~$L$ are not locally rigid in~$G$, but a small deformation of~$\Gamma$ will never provide a Zariski-dense subgroup of~$G$.
Indeed, by \cite{rag} and~\cite{wei64} there is a neighborhood in~$\Hom(\Gamma,G)$ of the natural inclusion whose elements are all homomorphisms of the form $\gamma\mapsto\nolinebreak\gamma\psi(\gamma)$, where $\psi : \Gamma\rightarrow\SO(2n,2)$ is a homomorphism with values in the center of~$\U(n,1)$.

On the other hand, for $(G,H,L)=(\SO(2n,2),\U(n,1),\SO(2n,1))$ with $n\geq 1$, there do exist small Zariski-dense deformations of certain uniform lattices of~$L$ in~$G$ (see Section~\ref{quotients compacts Zariski-denses}): such deformations can be obtained by a bending construction due to Johnson and Millson~\cite{jm}.
Theorem~\ref{proprete, groupes de Lie} therefore implies the following result on the compact quotients of the homogeneous space $G/H=\SO(2n,2)/\U(n,1)$.

\begin{coro}\label{quotients compacts de SO(2n,2)/SU(n,1)}
For any~$n\geq 1$, there is a Zariski-dense discrete subgroup of~$\SO(2n,2)$ acting properly discontinuously, freely, and cocompactly on \linebreak $\SO(2n,2)/\U(n,1)$.
\end{coro}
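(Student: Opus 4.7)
The plan is to apply Theorem~\ref{proprete, groupes de Lie} to the triple $(G,H,L)=(\SO(2n,2),\U(n,1),\SO(2n,1))$, which appears as item~(2) in the list above, and to feed into it a one-parameter family of deformations of a uniform lattice $\Gamma\subset L$ in~$G$ obtained from a Johnson--Millson bending~\cite{jm}. The strategy is: once one has deformations $\varphi_t$ of the natural inclusion that are Zariski-dense in~$G$ for $t\neq 0$, Theorem~\ref{proprete, groupes de Lie} will automatically guarantee that for $|t|$ small, $\varphi_t(\Gamma)$ is discrete in~$G$ and acts properly discontinuously and cocompactly on~$G/H$, with the action free because $\Gamma$ is torsion-free and $\varphi_t$ is injective.

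First I would choose an arithmetic torsion-free uniform lattice $\Gamma$ of $L=\SO(2n,1)$ of ``simplest type'', \textit{i.e.}, defined by a quadratic form over a totally real number field, such that the closed hyperbolic manifold $M=\Gamma\backslash\HH^{2n}$ contains an embedded compact totally geodesic hypersurface~$N$; the existence of such $\Gamma$ is classical. Let $C\subset\Gamma$ be the stabilizer of a chosen lift of~$N$ to~$\HH^{2n}$; then $C$ sits inside a subgroup $L_0\simeq\SO(2n-1,1)$ of~$L$, and, according as $N$ separates~$M$ or not, $\Gamma$ decomposes as an amalgamated product $A\ast_C B$ or as an HNN extension over~$C$. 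Viewed inside $G=\SO(2n,2)$, $L_0$ stabilizes a subspace of signature $(2n-1,1)$ in the ambient quadratic space of signature $(2n,2)$; its orthogonal complement has signature~$(1,1)$, so $Z_G(L_0)$ contains the corresponding $\SO(1,1)$, which is \emph{not} contained in~$L$. Picking a one-parameter subgroup $(a_t)_{t\in\R}$ of that $\SO(1,1)$, I define $\varphi_t:\Gamma\to G$ by $\varphi_t|_A=\mathrm{id}$ and $\varphi_t|_B=\Int(a_t)|_B$ (or, in the HNN case, by sending the stable letter $s$ to~$a_t s$). Since $a_t$ centralizes~$C$, each $\varphi_t$ is well defined, and $\varphi_0$ is the inclusion, so $\varphi_t$ lies in any prescribed neighborhood of the natural inclusion in~$\Hom(\Gamma,G)$ for $|t|$ small.

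With the family $\varphi_t$ in hand, Theorem~\ref{proprete, groupes de Lie} gives, for $|t|$ small, discreteness of~$\varphi_t(\Gamma)$, injectivity of~$\varphi_t$, and a properly discontinuous, free, cocompact action on~$G/H$. The substantive remaining step, which I expect to be the main obstacle, is to show that for some such small $t\neq 0$, $\varphi_t(\Gamma)$ is Zariski-dense in~$G$. The Zariski closure of $\varphi_t(\Gamma)$ contains both $\varphi_t(A)=A$, itself Zariski-dense in~$L=\SO(2n,1)$, and $\varphi_t(B)=a_tBa_t^{-1}$, Zariski-dense in the distinct conjugate $a_tLa_t^{-1}$; so its Lie algebra contains $\so(2n,1)+\Ad(a_t)(\so(2n,1))$. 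The task is then the finite algebraic check, using the list of maximal subalgebras of $\so(2n,2)$, that no proper reductive subalgebra of~$\so(2n,2)$ contains two such distinct conjugates of~$\so(2n,1)$ for $t$ in a dense set of values near~$0$; for $n=1$, where $\SO(2,2)$ is not simple, the two simple factors are handled separately by the same mechanism.

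This algebraic verification of Zariski density (equivalently, of the nontriviality of the bending at the level of the Zariski closure) is the only genuinely new input needed; discreteness, properness, freeness, and cocompactness all follow formally from Theorem~\ref{proprete, groupes de Lie} once the deformation $\varphi_t$ is constructed and shown to be Zariski-dense.
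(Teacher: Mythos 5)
Your proposal follows exactly the paper's route: apply Theorem~\ref{proprete, groupes de Lie} to $(G,H,L)=(\SO(2n,2),\U(n,1),\SO(2n,1))$, then bend an arithmetic lattice of~$L$ in~$G$ \`a la Johnson--Millson along the $\SO(1,1)$ in the centralizer of the hypersurface subgroup, treating the separating and nonseparating cases via an amalgam and an HNN extension respectively. The formal deductions (injectivity, discreteness, properness, cocompactness, freeness) are correctly attributed to the theorem. The gap is exactly where you flag it: you do not carry out the Zariski-density verification, and the strategy you sketch --- ``the finite algebraic check, using the list of maximal subalgebras of~$\so(2n,2)$'' --- is more laborious than what is needed and is not actually done. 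The paper replaces that classification by a one-line representation-theoretic observation (Remark~\ref{remarque algebres de Lie}): $\so(m,2)$ decomposes as $\so(m,1)\oplus\R^{m+1}$ into irreducible $\SO(m,1)$-modules, so any Lie subalgebra strictly containing $\so(m,1)$ is already all of $\so(m,2)$. With that in hand, one only needs the Lie algebra of $\overline{\varphi_t(\Gamma)}$ to contain $\so(m,1)$ properly, which reduces to showing $\Ad(e^{tY})(\so(m,1))\neq\so(m,1)$ for small $t\neq 0$.

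That last point is a second gap in your write-up: you assert that $a_tLa_t^{-1}$ is a ``distinct conjugate'' of~$L$, but this must be proved. A priori, $a_t$ could lie in the normalizer $N_G(\SO(m,1)^\circ)$, in which case the bending would produce nothing new. The paper disposes of this cleanly: near the identity the exponential map identifies $N_G(\SO(m,1)^\circ)$ with $\n_{\so(m,2)}(\so(m,1))$, and by the same irreducibility argument $\n_{\so(m,2)}(\so(m,1))=\so(m,1)$, so $e^{tY}\in N_G(\SO(m,1)^\circ)$ for small $t\neq 0$ would force $Y\in\so(m,1)$, contradicting the choice of~$Y$. You should supply this step; without it the ``distinctness'' of $a_tLa_t^{-1}$ and hence the Zariski-density is not justified. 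Finally, the special worry about $n=1$ (semisimplicity of $\SO(2,2)$) is unnecessary: Remark~\ref{remarque algebres de Lie} and the normalizer argument apply uniformly for all $m\geq 2$, and no case split is needed.
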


Note that by~\cite{ky}, Prop.~3.2.7, the homogeneous space $\SO(2n,2)/\U(n,1)$ is a pseudo-Riemannian symmetric space of signature~$(2n,n^2-1)$.

The existence of compact quotients of reductive homogeneous spaces by Zariski-dense discrete subgroups was known so far only for homogeneous spaces of the form $(G_0\times G_0)/\Delta_{G_0}$.

\subsection{Deformation of properly discontinuous actions over a general local field}

We prove that the properness of the action is preserved under small deformations not only for real groups, but more generally for algebraic groups over any local field~$\kkk$.
By a local field we mean $\R$, $\C$, a finite extension of~$\Q_p$, or the field $\F_q((t))$ of formal Laurent series over a finite field~$\F_q$.
Moreover we relax the assumption that $\Gamma$ is a torsion-free uniform lattice of~$L$, in the following way.

\begin{theo}\label{proprete, groupes algebriques}
Let $\kkk$ be a local field, $G$ the set of $\kkk$-points of a reductive algebraic $\kkk$-group~$\mathbf{G}$, and $H$ (resp.~$L$) the set of $\kkk$-points of a closed reductive subgroup~$\mathbf{H}$ (resp.~$\mathbf{L}$) of~$\mathbf{G}$.
Assume that $\mathrm{rank}_{\kkk}(\mathbf{L})=1$ and that $L$ acts properly on~$G/H$.
If $\kkk=\R$ or~$\C$, let $\Gamma$ be a torsion-free convex cocompact subgroup of~$L$; if $\kkk$ is non-Archimedean, let $\Gamma$ be any torsion-free finitely generated discrete subgroup of~$L$.
Then there is a neighborhood $\mathcal{U}\subset\Hom(\Gamma,G)$ of the natural inclusion such that any $\varphi\in\mathcal{U}$ is injective and $\varphi(\Gamma)$ is discrete in~$G$, acting properly discontinuously on~$G/H$.
\end{theo}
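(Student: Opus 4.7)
The plan is to reduce the properness of the $\varphi(\Gamma)$-action on $G/H$ to the Kobayashi--Benoist criterion for Cartan projections, and then to establish a stability estimate for the Cartan projection along~$\Gamma$ under small deformations, using the rank-one hypothesis on~$L$. Fix a Cartan decomposition $G=KA^+K$ and the associated Cartan projection $\mu:G\to\aaa^+$. The criterion (valid over any local field) says that a subset $\Lambda\subset G$ acts properly on $G/H$ if and only if $\mu(\Lambda)$ \emph{properly diverges} from~$\mu(H)$: for every compact $C\subset\aaa$ the intersection $(\mu(H)+C)\cap\mu(\Lambda)$ is compact. Applied to $L$, this gives proper divergence of $\mu(L)$ from $\mu(H)$. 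Because $\mathrm{rank}_\kkk(\mathbf L)=1$, the standard comparison lemma for Cartan projections of reductive subgroups forces $\mu(L)$ to sit inside a bounded tubular neighborhood of a single ray $\R_+ v_L\subset\aaa^+$; hence this ray, together with any bounded neighborhood of it, is properly separated from~$\mu(H)$.

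The group $\Gamma$ is word-hyperbolic and quasi-isometrically embedded in~$L$: in the Archimedean case this is exactly the convex cocompactness hypothesis, while in the non-Archimedean case a torsion-free discrete subgroup of the $\kkk$-rank-one group~$L$ acts freely on the Bruhat--Tits tree, so a finitely generated such subgroup is free and convex cocompact on its minimal invariant subtree. Composing with the orbit map of~$L$ in the symmetric space or building of~$G$, one obtains a quasi-isometric embedding $\Gamma\hookrightarrow G$, and in particular $\|\mu(\gamma)\|$ grows linearly with the word length~$|\gamma|_\Gamma$.

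The heart of the proof is the following stability estimate: for $\varphi$ in a small neighborhood~$\mathcal U\subset\Hom(\Gamma,G)$ of the natural inclusion~$\iota$, there exist a constant $C>0$ and a function $\eta$ on~$\mathcal U$ with $\eta(\varphi)\to 0$ as $\varphi\to\iota$, such that
\[\|\mu(\varphi(\gamma))-\mu(\gamma)\|\leq \eta(\varphi)\,|\gamma|_\Gamma+C\qquad(\gamma\in\Gamma).\]
To prove this, I would first control, via continuity of the action on a fixed basepoint~$o$ in the symmetric space/building of~$G$, the displacement of~$o$ under each generator of~$\Gamma$; then apply a Morse-type stability lemma in the Gromov-hyperbolic rank-one space~$L/K_L$ sitting inside~$G/K_G$. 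A geodesic-word orbit $(\varphi(\gamma_1\cdots\gamma_n)\cdot o)_n$ remains a uniform quasi-geodesic and fellow-travels the unperturbed orbit up to a sublinear error, and this translates through the tubular neighborhood of~$\R_+ v_L$ into the claimed bound on~$\mu$.

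With this estimate in hand, $\mu(\varphi(\Gamma))$ lies in a narrow cone around~$\R_+ v_L$ that remains properly separated from~$\mu(H)$ once $\eta(\varphi)$ is small enough, and $\|\mu(\varphi(\gamma))\|\to\infty$ as $|\gamma|_\Gamma\to\infty$; the Kobayashi--Benoist criterion then yields proper discontinuity of $\varphi(\Gamma)$ on~$G/H$. Discreteness and injectivity follow from the lower bound on~$\|\mu(\varphi(\gamma))\|$ for long words combined with continuity for short words. The main obstacle is the stability estimate itself: transferring the hyperbolic Morse lemma on~$L/K_L$ into a uniform linear estimate on the higher-rank Cartan projection~$\mu$ valued in~$\aaa^+$, uniformly over all of~$\Gamma$, is the technically delicate part.
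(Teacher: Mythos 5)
Your overall scaffolding matches the paper's: reduce to the Benoist--Kobayashi properness criterion (quantified as a ``drift away from $\mu(H)$'' along the rank-one ray), observe that the rank-one hypothesis on $\mathbf{L}$ confines $\mu(L)$ to a bounded neighbourhood of a union of two half-lines, and reduce Theorem~\ref{proprete, groupes algebriques} to a stability estimate of the form $\Vert\mu(\varphi(\gamma))-\mu(\gamma)\Vert\leq\varepsilon\Vert\mu(\gamma)\Vert+C_\varepsilon$, which is exactly Theorem~\ref{varphi ne change pas beaucoup mu}. The deduction of discreteness and injectivity of $\varphi$ from the lower bound on $\Vert\mu(\varphi(\gamma))\Vert$ and the torsion-free hypothesis is also the paper's argument.

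The gap is in your proposed proof of the stability estimate itself. You want to control $\mu(\varphi(\gamma))$ by applying a Morse/fellow-traveling lemma to the broken-geodesic orbit $(\varphi(\gamma_1\cdots\gamma_n)\cdot o)_n$, ``in the Gromov-hyperbolic rank-one space $L/K_L$ sitting inside $G/K_G$.'' But the deformed group $\varphi(\Gamma)$ is a subgroup of $G$, not of $L$, so its orbit does \emph{not} live in $L/K_L$; it lives in $G/K$, and $G/K$ is not Gromov-hyperbolic once $\mathrm{rank}_\kkk(\mathbf{G})\geq 2$. In a higher-rank symmetric space or Bruhat--Tits building, quasi-geodesics need not fellow-travel geodesics, so the classical Morse lemma gives nothing; and there is no coarse retraction of $G/K$ onto $L/K_L$ you could push the deformed orbit through, since the whole point is that $\varphi(\Gamma)$ has left~$L$. (A higher-rank Morse lemma in the sense of Kapovich--Leeb--Porti would require already knowing that $\varphi$ is Anosov/regular, which is essentially what one is trying to establish.) So as written, the ``technically delicate part'' is not a technicality but the actual content of Theorem~\ref{varphi ne change pas beaucoup mu}, and your route does not supply it.

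The paper bridges the rank-one/higher-rank gap differently: hyperbolicity of $L$ is used \emph{only} to decompose $\gamma\in\Gamma$ into a transverse product $\gamma_0\cdots\gamma_n$ of elements from a fixed finite set $F$ with controlled $\mu_L$ and transversality $\ell_{\gamma_i}k_{\gamma_{i+1}}\in\mathcal{C}_LP_L$ (Proposition~\ref{produit d'elements transverses}, Guichard's lemma; the shadow lemma replaces the Morse lemma here). The passage to $G$ is then made not by any coarse geometry of $G/K$ but algebraically, via the compatibility Lemma~\ref{P_L and P_alpha} ($P_L\subset P_\alpha$, $N_L^-\subset N_\alpha^-P_\alpha$ for $\alpha\in\Delta_L$), the representations $(\rho_\alpha,V_\alpha)$ with $\log\Vert\rho_\alpha(g)\Vert_\alpha=\langle\chi_\alpha,\mu(g)\rangle$, and the proximal-dynamics estimate for products of $\varepsilon$-proximal elements in $\PP(V_\alpha)$ (Lemma~\ref{produits d'elements proximaux decales} and Proposition~\ref{mu d'un produit transverse selon alpha}). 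That machinery yields $|\langle\chi_\alpha,\mu(\varphi(\gamma))-\sum_i\mu(\varphi(\gamma_i))\rangle|\leq nr_\alpha$, which is exactly the replacement for the fellow-traveling estimate that your proposal is missing. To fix your argument you would need to replace the Morse-lemma step by such a projective/representation-theoretic bound, or otherwise justify a higher-rank fellow-traveling statement independently of the conclusion.
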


Recall that for $\kkk=\R$ or~$\C$, a discrete subgroup of~$L$ is called \textit{convex cocompact} if it acts cocompactly on the convex hull of its limit set in the symmetric space of~$L$.
In particular, any uniform lattice of~$L$ is convex cocompact.

For $\kkk=\R$ or~$\C$, Theorem~\ref{proprete, groupes de Lie} follows from Theorem~\ref{proprete, groupes algebriques} and from a cohomological argument due to Kobayashi (see Section~\ref{Proprete and deformations}).
This argument does not transpose to the non-Archimedean case.

Note that in characteristic zero, every finitely generated subgroup of~$L$ is virtually torsion-free by Selberg's lemma (\cite{sel}, Lem.~8), hence the ``torsion-free'' assumption in Theorem~\ref{proprete, groupes algebriques} may easily be removed in this case.

\subsection{Translation in terms of a Cartan projection}

Let $\kkk$ be a local field and $G$ the set of $\kkk$-points of a connected reductive algebraic $\kkk$-group.
Fix a Cartan projection $\mu : G\rightarrow E^+$ of~$G$, where $E^+$ is a closed convex cone in a real finite-dimensional vector space~$E$ (see Section~\ref{Preliminaires}).
For any closed subgroup~$H$ of~$G$, the \textit{properness criterion} of Benoist (\cite{ben96}, Cor.~5.2) and Kobayashi (\cite{kob96}, Th.~1.1) translates the properness of the action on~$G/H$ of a subgroup~$\Gamma$ of~$G$ in terms of~$\mu$.
Using this criterion (see Subsection~\ref{Proprete and deformations}), Theorem~\ref{proprete, groupes algebriques} is a consequence of the following result, where we fix a norm~$\Vert\cdot\Vert$ on~$E$.

\begin{theo}\label{varphi ne change pas beaucoup mu}
Let $\kkk$ be a local field, $G$ the set of $\kkk$-points of a connected reductive algebraic $\kkk$-group~$\mathbf{G}$, and $L$ the set of $\kkk$-points of a closed reductive subgroup~$\mathbf{L}$ of~$\mathbf{G}$ of $\kkk$-rank~$1$.
If $\kkk=\R$ or~$\C$, let $\Gamma$ be a convex cocompact subgroup of~$L$; if $\kkk$ is non-Archimedean, let $\Gamma$ be any finitely generated discrete subgroup of~$L$.
For any $\varepsilon>0$, there is a neighborhood $\mathcal{U}_{\varepsilon}\subset\Hom(\Gamma,G)$ of the natural inclusion and a constant $C_{\varepsilon}\geq 0$ such that
$$\big\Vert\mu(\varphi(\gamma)) - \mu(\gamma)\big\Vert \leq \varepsilon \Vert\mu(\gamma)\Vert + C_{\varepsilon}$$
for all $\varphi\in\mathcal{U}_{\varepsilon}$ and all~$\gamma\in\Gamma$.
\end{theo}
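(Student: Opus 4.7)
The plan is to exploit the rank-one structure of $L$ together with the assumption on $\Gamma$ (convex cocompactness, resp.\ finite generation in the non-Archimedean case) to compare the Cartan projections of $\gamma$ and of $\varphi(\gamma)$ via a sub-additivity estimate applied to a geodesic word decomposition of $\gamma$. Two ingredients enter: a Milnor--\v{S}varc quasi-isometric embedding of $\Gamma$ into the (rank-one) symmetric space or Bruhat--Tits tree of $L$, and the stability of quasi-geodesics specific to the rank-one setting.

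Fix a finite symmetric generating set $S$ of $\Gamma$. Under our hypothesis $\Gamma$ acts geometrically on the convex hull of its limit set in $X_L$ (resp.\ on a $\Gamma$-invariant subtree in the non-Archimedean case, by Bass--Serre), so there exist constants $A,B>0$ with
\begin{equation*}
A^{-1}|\gamma|_S - B \;\leq\; \|\mu(\gamma)\| \;\leq\; A\,|\gamma|_S + B \qquad (\gamma\in\Gamma),
\end{equation*}
and $\Gamma$ is word hyperbolic (in the Archimedean case). Moreover, since $\mathrm{rank}_\kkk(\mathbf{L})=1$, the image $\mu(L)$ lies along a single ray $\R_{\geq 0}\,v_0\subset E^+$, so $\mu(\gamma)=\|\mu(\gamma)\|\,v_0$ for every $\gamma\in L$. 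I would then choose for each $\gamma\in\Gamma$ a geodesic writing $\gamma=s_1\cdots s_n$ with $n=|\gamma|_S$, so that the partial products $s_1\cdots s_k$ trace a uniform quasi-geodesic in $X_L$.

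Using the sub-additivity $\|\mu(gh)\|\leq\|\mu(g)\|+\|\mu(h)\|$ of the Cartan projection (in a suitable Weyl-invariant norm on $E$), the factorisation $\varphi(\gamma)=\varphi(s_1)\cdots\varphi(s_n)$ yields
\begin{equation*}
\|\mu(\varphi(\gamma))\|\;\leq\;n\cdot\max_{s\in S}\|\mu(\varphi(s))\|.
\end{equation*}
For $\varphi$ close enough to the inclusion, continuity of $\mu$ and of evaluation at each $s\in S$ make $\max_{s\in S}\|\mu(\varphi(s))\|$ arbitrarily close to $M:=\max_{s\in S}\|\mu(s)\|$. Combined with the Milnor--\v{S}varc bound this already yields a Lipschitz-type control of $\|\mu(\varphi(\gamma))\|$ by $\|\mu(\gamma)\|$.

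The main obstacle is to upgrade this crude Lipschitz bound to the sharp additive statement $\|\mu(\varphi(\gamma))-\mu(\gamma)\|\leq\varepsilon\|\mu(\gamma)\|+C_\varepsilon$ and, simultaneously, to control the direction of $\mu(\varphi(\gamma))$ in $E^+$. For this I would invoke the stability of quasi-geodesics in the rank-one space $X_L\subset X_G$ (the $\mathrm{CAT}(-1)$ Morse lemma, or its analogue for trees): the sequence $k\mapsto\varphi(s_1\cdots s_k)\cdot x_0$ remains a quasi-geodesic in $X_G$ with constants converging to those of the unperturbed orbit as $\varphi\to\mathrm{inclusion}$. Since the Cartan projection of $g\in G$ is, up to a bounded error, the $E^+$-valued displacement of $x_0$ by $g$, this forces both $\|\mu(\varphi(\gamma))\|$ close to $\|\mu(\gamma)\|$ and the direction of $\mu(\varphi(\gamma))$ close to $v_0$, with an additive error of order $\varepsilon\|\mu(\gamma)\|+O(1)$. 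The hypothesis $\mathrm{rank}_\kkk(\mathbf{L})=1$ is essential precisely at this point: the Morse-type stability of quasi-geodesics fails in higher rank, and it is also what makes convex cocompactness the appropriate regularity condition in the Archimedean case.
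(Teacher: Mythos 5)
Your proposal correctly identifies some of the scaffolding (Milnor--\v{S}varc quasi-isometry, a word/geodesic decomposition of $\gamma$, the subadditivity $\Vert\mu(gh)\Vert\leq\Vert\mu(g)\Vert+\Vert\mu(h)\Vert$, the essential role of $\mathrm{rank}_\kkk(\mathbf{L})=1$), but the central step --- invoking ``stability of quasi-geodesics'' to force $\mu(\varphi(\gamma))$ close to $\mu(\gamma)$ --- has a genuine gap, for three compounding reasons. First, Morse stability is a property of Gromov-hyperbolic spaces, and the ambient symmetric space or building $X_G$ is in general of higher rank, hence \emph{not} hyperbolic; the rank-one hypothesis is on $L$, not on $G$. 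After the deformation $\varphi$, the orbit $k\mapsto\varphi(s_1\cdots s_k)\cdot x_0$ no longer lies in the totally geodesic $X_L$, so you cannot transfer the rank-one Morse lemma from $X_L$. Second, even setting aside hyperbolicity, asserting that the deformed orbit remains a quasi-geodesic \emph{with controlled constants} is precisely the lower bound on $\Vert\mu(\varphi(\gamma))\Vert$ that the theorem must supply; subadditivity gives only the trivial upper bound, so the argument as stated is circular on the hard direction. Third, the Morse lemma (where it applies) controls distances, i.e.\ $\Vert\mu\Vert$, but the statement requires controlling the full $E^+$-valued Cartan projection, including its direction in the Weyl chamber; no coarse fellow-travelling argument gives you that, especially when $\mathrm{rank}_\kkk(\mathbf{G})\geq 2$.

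The paper closes this gap by a completely different mechanism. The coarse geometry of $L$ (convex cocompactness, or Bass's theorem for trees) enters only to produce, via Guichard's decomposition (Proposition~\ref{produit d'elements transverses}), a factorization $\gamma=\gamma_0\cdots\gamma_n$ into pieces of roughly equal $\mu_L$-size, all of the same sign, and mutually \emph{transverse} in the sense $\ell_{\gamma_i}k_{\gamma_{i+1}}\in\mathcal{C}_LP_L$; the rank-one hypothesis on $L$ is used there through the shadow lemma (Lemma~\ref{equivalence transversalite}), not through any hyperbolicity of $X_G$. The quantitative control of $\mu(\varphi(\gamma))$ then comes from representation theory and proximal dynamics: the linear forms $\langle\chi_\alpha,\cdot\rangle$ on $E$ are read off from operator norms of the fundamental representations $(\rho_\alpha,V_\alpha)$, and transversality ensures that $\rho_\alpha(\varphi(\gamma_0))\cdots\rho_\alpha(\varphi(\gamma_n))$ is a product of $\varepsilon$-proximal endomorphisms with a common attracting line/repelling hyperplane, whence the norm estimate of Lemma~\ref{produits d'elements proximaux decales} and Proposition~\ref{mu d'un produit transverse selon alpha}. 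An openness argument then shows that transversality persists for $\varphi$ near the inclusion, which is what replaces your Morse-type stability. If you want to salvage your approach, you would need something along the lines of the Kapovich--Leeb--Porti theory of Morse quasi-geodesics in higher-rank symmetric spaces, but proving the needed stability there is essentially as hard as the proximality estimate the paper carries out directly.
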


\subsection{Ideas of proofs}

The core of the paper is the proof of Theorem~\ref{varphi ne change pas beaucoup mu}.
We start by recalling, in Section~\ref{Preliminaires}, that certain linear forms~$\ell$ on~$E$ are connected to representations~$(V,\rho)$ of~$\mathbf{G}$ by relations of the form
$$\ell(\mu(g)) = \log\Vert\rho(g)\Vert_{_V}$$
for all~$g\in G$, where~$\Vert\cdot\Vert_{_V}$ is a certain fixed norm on~$V$.
We are thus led to bound ratios of the form $\Vert\rho(\varphi(\gamma))\Vert_{_V}/\Vert\rho(\gamma)\Vert_{_V}$, where~$\gamma\in\Gamma\smallsetminus\{ 1\} $ and where $\varphi\in\Hom(\Gamma,G)$ is close to the natural inclusion of~$\Gamma$ in~$G$.

In order to bound these ratios we look at the dynamics of~$G$ acting on the projective space~$\PP(V)$, notably the dynamics of the elements~$g\in G$ that are \textit{proximal} in~$\PP(V)$.
By definition, such elements~$g\in G$ admit an attracting fixed point and a repelling projective hyperplane in~$\PP(V)$.
In Section~\ref{Dynamique proximale} we consider products $z_1k_2z_2\ldots k_nz_n$ of proximal elements~$z_i$ having a common attracting fixed point~$x_0^+$ and a common repelling hyperplane~$X_0^-$, with isometries~$k_i$ such that $k_i\cdot x_0^+$ remains bounded away from~$X_0^-$.
We estimate the contraction power of such a product in terms of the contraction powers of the~$z_i$.

In Section~\ref{Partie produit transverse} we see how such dynamical considerations apply to the elements $\gamma\in\Gamma$ and their images~$\varphi(\gamma)$ under a small deformation $\varphi\in\nolinebreak\Hom(\Gamma,G)$.
We use Guichard's idea~\cite{gui} of writing every element~$\gamma\in\Gamma$ as a product $\gamma_0\ldots\gamma_n$ of elements of a fixed finite subset~$F$ of~$\Gamma$, where the norms~$\Vert\mu(\gamma_i)\Vert$ and $\Vert\mu(\gamma_i\gamma_{i+1})-\mu(\gamma_i)-\mu(\gamma_{i+1})\Vert$ are controlled for all~$i$.

In Section~\ref{Demonstration des theoremes} we combine the results of Sections~\ref{Dynamique proximale} and~\ref{Partie produit transverse} by carefully choosing the finite subset~$F$ of~$\Gamma$ in order to get a sharp control of the ratios~$\Vert\rho(\varphi(\gamma))\Vert_{_V}/\Vert\rho(\gamma)\Vert_{_V}$, or equivalently of $\ell(\mu(\varphi(\gamma))-\mu(\gamma))$ for~$\gamma\in\Gamma\smallsetminus\{ 1\} $.
From this we deduce Theorem~\ref{varphi ne change pas beaucoup mu}.

At the end of Section~\ref{Demonstration des theoremes} we explain how Theorems~\ref{proprete, groupes de Lie} and~\ref{proprete, groupes algebriques} follow from Theorem~\ref{varphi ne change pas beaucoup mu}.
Finally, in Section~\ref{quotients compacts Zariski-denses} we establish Corollary~\ref{quotients compacts de SO(2n,2)/SU(n,1)} by relating Theorem~\ref{proprete, groupes de Lie} to Johnson and Millson's bending construction.

\subsection*{Acknowledgements}

I warmly thank Yves Benoist and Olivier Guichard for fruitful discussion.

\section{Cartan projections, maximal parabolic subgroups, and representations}\label{Preliminaires}

Throughout the paper, $\kkk$ denotes a local field, \textit{i.e.}, $\R$, $\C$, a finite extension of~$\Q_p$, or the field $\F_q((t))$ of formal Laurent series over a finite field~$\F_q$.
If $\kkk=\R$ or~$\C$, we denote by~$|\cdot|$ the usual absolute value on~$\kkk$.
If $\kkk$ is non-Archimedean, we denote by~$\mathcal{O}$ the ring of integers of~$\kkk$, by~$q$ the cardinal of its residue field, by~$\pi$ a uniformizer, by~$\omega$ the (additive) valuation on~$\kkk$ such that $\omega(\pi)=1$, and by $|\cdot| = q^{-\omega(\cdot)}$ the corresponding (multiplicative) absolute value.
If $\mathbf{G}$ is an algebraic group, we denote by~$G$ the set of its $\kkk$-points and by~$\g$ its Lie algebra.

In this section, we recall a few well-known facts on connected reductive algebraic $\kkk$-groups and their Cartan projections.

\subsection{Weyl chambers}\label{chambre de Weyl}

Fix a connected reductive algebraic $\kkk$-group $\mathbf{G}$.
The derived group~$\mathbf{D(G)}$ is semisimple, the identity component~$\mathbf{Z(G)}^{\circ}$ of the center of~$\mathbf{G}$ is a torus, which is trivial if $\mathbf{G}$ is semisimple, and $\mathbf{G}$ is the almost product of~$\mathbf{D(G)}$ and~$\mathbf{Z(G)}^{\circ}$.
Recall that the $\kkk$-split $\kkk$-tori of~$\mathbf{G}$ are all conjugate over~$\kkk$.
Fix such a torus~$\mathbf{A}$ and let $\mathbf{N}$ (resp.~$\mathbf{Z}$) denote its normalizer (resp.\ centralizer) in~$\mathbf{G}$.
The group $X(\mathbf{A})$ of $\kkk$-characters of~$\mathbf{A}$ and the group $Y(\mathbf{A})$ of $\kkk$-cocharacters are both free $\Z$-modules of rank $\mathrm{rank}_{\kkk}(\mathbf{G})$ and there is a perfect pairing
$$\langle\cdot\,,\cdot\rangle : X(\mathbf{A})\times Y(\mathbf{A})\longrightarrow\Z.$$
Note that $\mathbf{A}$ is the almost product of $(\mathbf{A}\cap\mathbf{D(G)})^{\circ}$ and~$(\mathbf{A}\cap\mathbf{Z(G)})^{\circ}$, hence $X(\mathbf{A})\otimes_{\Z}\R$ is the direct sum of $X((\mathbf{A}\cap\mathbf{D(G)})^{\circ})\otimes_{\Z}\R$ and $X((\mathbf{A}\cap\nolinebreak\mathbf{Z(G)})^{\circ})\otimes_{\Z}\nolinebreak\R$.

The set $\Phi=\Phi(\mathbf{A},\mathbf{G})$ of restricted roots of~$\mathbf{A}$ in~$\mathbf{G}$, \textit{i.e.}, the set of nontrivial weights of~$\mathbf{A}$ in the adjoint representation of~$\mathbf{G}$, is a root system of $X((\mathbf{A}\cap\mathbf{D(G)})^{\circ})\otimes_{\Z}\R$.
For~$\alpha\in\Phi$, let~$\check{\alpha}$ be the corresponding coroot: by definition, $\langle\alpha,\check{\alpha}\rangle=2$ and $s_{\alpha}(\Phi)=\Phi$, where $s_{\alpha}$ is the reflection of $X(\mathbf{A})\otimes_{\Z}\R$ mapping~$x$ to $x - \langle x,\check{\alpha}\rangle\,\alpha$.
The group $W=N/Z$ is finite and identifies with the Weyl group of~$\Phi$, generated by the reflections~$s_{\alpha}$.

Similarly, $E=Y(\mathbf{A})\otimes_{\Z}\R$ is the direct sum of $E_D=Y((\mathbf{A}\cap\mathbf{D(G)})^{\circ})\otimes_{\Z}\R$ and $E_Z=Y((\mathbf{A}\cap\nolinebreak\mathbf{Z(G)})^{\circ})\otimes_{\Z}\R$.
The group $W=N/Z$ acts trivially on~$E_Z$ and identifies with the Weyl group of the root system $\check{\Phi}=\{ \check{\alpha},\ \alpha\in\Phi\} $ of~$E_D$.
We refer to~\cite{bot} for proofs and more detail.

If $\kkk$ is non-Archimedean, set $A^{\circ}=A$; if $\kkk=\R$ or~$\C$, set
$$A^{\circ} = \big\{ a\in A\,,\quad \chi(a)\in\, ]0,+\infty[ \quad\forall\chi\in X(\mathbf{A})\big\} .$$
Choose a basis~$\Delta$ of~$\Phi$ and let
$$\begin{array}{lcclcl}
& A^+ & = & \big\{ a\in A^{\circ}\!, & \ |\alpha(a)|\geq 1 & \forall\alpha\in\Delta\big\} \\
\mathrm{\big(resp.}\quad & E^+ & = & \big\{ x\in E, & \langle\alpha,x\rangle\geq 0 & \forall\alpha\in\Delta\big\} \mathrm{\big)}
\end{array}$$
denote the corresponding closed positive Weyl chamber in~$A^{\circ}$ (resp.\ in~$E$).
The set~$E^+$ is a closed convex cone in the real vector space~$E$.
If $\kkk=\R$ or~$\C$, then $E$ identifies with~$\aaa$ and $E^+$ with $\log A^+\subset\aaa$, and we endow $E$ with the Euclidean norm~$\Vert\cdot\Vert$ induced by the Killing form of~$\g$.
If $\kkk$ is non-Archimedean, we endow~$E$ with any $W$-invariant Euclidean norm~$\Vert\cdot\Vert$.

\subsection{Cartan decompositions and Cartan projections}\label{Projection de Cartan}

If $\kkk=\R$ or~$\C$, there is a maximal compact subgroup $K$ of~$G$ such that the Cartan decomposition $G=KA^+K$ holds: for $g\in G$, there are elements $k_g,\ell_g\in K$ and a unique $a_g\in A^+$ such that $g = k_g a_g \ell_g$ (\cite{hel}, Chap.~9, Th.~1.1).
Setting $\mu(g)=\log a_g$ defines a map $\mu : G\rightarrow E^+\simeq\log A^+$, which is continuous, proper, and surjective.
It is called the \emph{Cartan projection} with respect to the Cartan decomposition $G=KA^+K$.

If $\kkk$ is non-Archimedean, let $\Res : X(\mathbf{Z})\rightarrow X(\mathbf{A})$ denote the restriction homomorphism, where $X(\mathbf{Z})$ is the group of $\kkk$-characters of~$\mathbf{Z}$.
There is a unique group homomorphism $\nu : Z\rightarrow E$ such that
$$\langle\Res(\chi),\nu(z)\rangle = -\,\omega(\chi(z))$$
for all $\chi\in X(\mathbf{Z})$ and~$z\in Z$.
Let $Z^+\subset Z$ denote the inverse image of~$E^+$ under~$\nu$.
The \emph{Cartan decomposition} $G = KZ^+K$ holds: for $g\in G$, there are elements $k_g,\ell_g\in K$ and $z_g\in Z^+$ such that $g = k_g z_g \ell_g$, and $\nu(z_g)$ is uniquely defined.
Setting $\mu(g)=\nu(z_g)$ defines a map $\mu : G\rightarrow E^+$, which is continuous and proper, and whose image $\mu(G)$ is the intersection of~$E^+$ with a lattice of~$E$.
It is called the \emph{Cartan projection} with respect to the Cartan decomposition~$G=KZ^+K$.
For proofs and more detail we refer to the original articles \cite{bt1} and~\cite{bt2}, but the reader may also find~\cite{rou} a useful reference.

\subsection{A geometric interpretation}

If $\kkk=\R$ or~$\C$, let $X=G/K$ denote the Riemannian symmetric space of~$G$ and $d$ its distance; set $x_0=K\in X$.
Since $G$ acts on~$X$ by isometries, we have
\begin{equation}\label{prelim mu distance, reel}
\Vert\mu(g)\Vert = d(x_0,g\cdot x_0)
\end{equation}
for all~$g\in G$.
If $\kkk$ is non-Archimedean, let $X$ denote the \emph{Bruhat-Tits building} of~$G$: it is a metric space on which $G$ acts properly by isometries with a compact fundamental domain (see \cite{bt1} or~\cite{rou}).
When $\mathrm{rank}_{\kkk}(\mathbf{G})=1$, it is a bipartite simplicial tree (see \cite{ser77}, \S~II.1, for the case of $G=\SL_2(\kkk)$).
The group~$K$ is the stabilizer of some point~$x_0\in X$, and we have
\begin{equation}\label{prelim mu distance, ultrametrique}
\Vert\mu(g)\Vert = d(x_0,g\cdot x_0)
\end{equation}
for all~$g\in G$, where $d$ denotes the distance on~$X$.
It follows from (\ref{prelim mu distance, reel}) and (\ref{prelim mu distance, ultrametrique}) that in both cases (Archimedean or not),
\begin{equation}\label{inegalite triangulaire pour mu}
\Vert\mu(gg')\Vert \,\leq\, \Vert\mu(g)\Vert + \Vert\mu(g')\Vert
\end{equation}
for all $g,g'\in G$.
In fact, the following stronger inequalities hold (see for instance \cite{kas08}, Lem.~2.3): for all $g,g'\in G$,
\begin{equation}\label{inegalite fine pour mu}
\left \{
\begin{array}{c @{\ \leq\ } c}
    \Vert\mu(gg')-\mu(g')\Vert & \Vert\mu(g)\Vert,\\
    \Vert\mu(gg')-\mu(g)\Vert & \Vert\mu(g')\Vert.
\end{array}
\right.
\end{equation}

\subsection{Maximal parabolic subgroups}\label{Sous-groupes paraboliques maximaux}

For~$\alpha\in\Phi$, let~$\mathbf{U}_{\alpha}$ denote the corresponding unipotent subgroup of~$\mathbf{G}$, with Lie algebra $\uu_{\alpha}=\g_{\alpha}\oplus\g_{2\alpha}$, where
$$\g_{i\alpha} = \big\{ X\in\g,\quad \Ad(a)(X)=\alpha(a)^iX\quad \forall a\in A\big\} $$
for~$i=1,2$.
For any subset~$\theta$ of~$\Delta$, let~$\mathbf{P}_{\theta}$ denote the corresponding \textit{standard} parabolic subgroup of~$\mathbf{G}$, with Lie algebra
$$\p_{\theta} = \z \oplus \Big(\bigoplus_{\beta\in\Phi^+} \uu_{\beta}\Big) \oplus \Big(\bigoplus_{\beta\in\N(\Delta\smallsetminus\theta)} \uu_{-\beta}\Big).$$
Every parabolic $\kkk$-subgroup~$\mathbf{P}$ of~$\mathbf{G}$ is conjugate over~$\kkk$ to a unique standard one.
In particular, the maximal proper parabolic $\kkk$-subgroups of~$\mathbf{G}$ are the conjugates of the groups $\mathbf{P}_{\alpha}=\mathbf{P}_{\{ \alpha\} }$, where $\alpha\in\Delta$.

Fix~$\alpha\in\Delta$.
Since $\mathbf{P}_{\alpha}$ is its own normalizer in~$\mathbf{G}$, the \textit{flag variety} $\mathbf{G}/\mathbf{P}_{\alpha}$ parametrizes the set of parabolic $\kkk$-subgroups that are conjugate to~$\mathbf{P}_{\alpha}$.
It is a projective variety, which is defined over~$\kkk$.
Let~$\mathbf{N}_{\alpha}^-$ denote the unipotent subgroup of~$\mathbf{G}$ generated by the groups~$\mathbf{U}_{-\beta}$ for $\beta\in\alpha +\Phi^+$, with Lie algebra
$$\n_{\alpha}^- = \bigoplus_{\beta\in\Phi^+} \uu_{-(\alpha+\beta)}.$$
Let~$W_{\alpha}$ be the subgroup of~$W$ generated by the reflections~$s_{\beta}$ for $\beta\in\Delta\smallsetminus\{ \alpha\} $.
The \textit{Bruhat decomposition}
$$\mathbf{G}/\mathbf{P}_{\alpha} = \coprod_{w\in W/W_{\alpha}} \mathbf{N}_{\alpha}^- w \mathbf{P}_{\alpha}$$
holds, where the projective subvariety $\mathbf{N}_{\alpha}^- w \mathbf{P}_{\alpha}$ has positive codimension whenever~$wW_{\alpha}\neq W_{\alpha}$.
We refer to~\cite{bot} for proofs and more detail.

\subsection{Representations of~$\mathbf{G}$}\label{Representations de G}

For~$\alpha\in\Delta$, let $\omega_{\alpha}\in X(\mathbf{A})$ denote the corresponding fundamental weight: by definition, $\langle\omega_{\alpha},\check{\alpha}\rangle=1$ and $\langle\omega_{\alpha},\check{\beta}\rangle=\nolinebreak 0$ for all~$\beta\in\Delta\smallsetminus\{ \alpha\} $.
By \cite{tit71}, Th.~7.2, there is an irreducible $\kkk$-representation $(\rho_{\alpha},V_{\alpha})$ of~$\mathbf{G}$ whose highest weight~$\chi_{\alpha}$ is a positive multiple of~$\omega_{\alpha}$ and whose highest weight space~$x_{\alpha}^+$ is a line.
The point $x_{\alpha}^+\in\PP(V_{\alpha})$ is the unique fixed point of~$P_{\alpha}$ in~$\PP(V_{\alpha})$.
The map from $\mathbf{G}/\mathbf{P}_{\alpha}$ to~$\PP(V_{\alpha})$ sending~$g\mathbf{P}_{\alpha}$ to~$\rho_{\alpha}(g)(x_{\alpha}^+)$ is a closed immersion.
We denote the set of restricted roots of~$(\rho_{\alpha},V_{\alpha})$ by~$\Lambda_{\alpha}$ and, for every~$\lambda\in\Lambda_{\alpha}$, the weight space of~$\lambda$ by~$(V_{\alpha})_{\lambda}$.

If~$\kkk=\R$ (resp.\ if~$\kkk=\C$), then the weight spaces are orthogonal with respect to some $K$-invariant Euclidean (resp.\ Hermitian) norm~$\Vert\cdot\Vert_{\alpha}$ on~$V_{\alpha}$.
The corresponding operator norm~$\Vert\cdot\Vert_{\alpha}$ on~$\End(V_{\alpha})$ satisfies
\begin{equation}\label{norme des representations and Cartan projection, cas reel}
\Vert\rho_{\alpha}(g)\Vert_{\alpha} = e^{\langle\chi_{\alpha},\mu(g)\rangle}
\end{equation}
for all~$g\in G$.
If $\kkk$ is non-Archimedean, then there is a $K$-invariant ultrametric norm~$\Vert\cdot\Vert_{\alpha}$ on~$V_{\alpha}$ such that
$$\bigg\Vert\sum_{\lambda\in\Lambda_{\alpha}} v_{\lambda}\bigg\Vert_{\alpha} = \max_{\lambda\in\Lambda_{\alpha}} \Vert v_{\lambda}\Vert_{\alpha}$$
for all $(v_{\lambda})\in \prod_{\lambda\in\Lambda_{\alpha}}(V_{\alpha})_{\lambda}$ and such that the restriction of~$\rho_{\alpha}(z)$ to~$(V_{\alpha})_{\lambda}$ is a homothety of ratio~$q^{\langle\lambda,\nu(z)\rangle}$ for all $z\in Z$ and all $\lambda\in\Lambda_{\alpha}$ (\cite{qui}, Th.~6.1).
The corresponding operator norm~$\Vert\cdot\Vert_{\alpha}$ on~$\End(V_{\alpha})$ satisfies
\begin{equation}\label{norme des representations and Cartan projection, cas ultrametrique}
\Vert\rho_{\alpha}(g)\Vert_{\alpha} = q^{\langle\chi_{\alpha},\mu(g)\rangle}
\end{equation}
for all~$g\in G$.

\subsection{The example of $\mathbf{SL}_n$}

Let $\mathbf{G}=\mathbf{SL}_n$ for some integer $n\geq 2$.
The group $\mathbf{A}$ of diagonal matrices with determinant~$1$ is a maximal $\kkk$-split \linebreak $\kkk$-torus of~$\mathbf{G}$ which is its own centralizer, \textit{i.e.}, $\mathbf{Z}=\mathbf{A}$.
The corresponding \linebreak root system~$\Phi$ is the set of linear forms $\varepsilon_i-\varepsilon_j$, $1\leq i\neq j\leq n$, where
$$\varepsilon_i\big(\diag(a_1,\ldots,a_n)\big) = a_i.$$
The roots $\varepsilon_i-\varepsilon_{i+1}$, for $1\leq i\leq n-1$, form a basis~$\Delta$ of~$\Phi$.
If $\kkk$ is Archimedean (resp.\ non-Archimedean), the corresponding positive Weyl chamber is
\begin{eqnarray*}
A^+ & = & \big\{ \diag(a_1,\ldots,a_n)\in A,\ \ \! a_i\in\, ]0,+\infty[\ \forall i\ \,\mathrm{and}\ a_1\geq\ldots\geq a_n\big\} \\
\mathrm{\big(resp.}\quad A^+ & = & \big\{ \diag(a_1,\ldots,a_n)\in A,\ |a_1|\geq\ldots\geq|a_n|\big\} \mathrm{\big).}
\end{eqnarray*}
Set $K=\SO(n)$ (resp.\ $K=\SU(n)$, resp.\ $K=\SLn(\mathcal{O})$) if $\kkk=\R$ (resp.\ if $\kkk=\nolinebreak\C$, resp.\ if $\kkk$ is non-Archimedean).
The Cartan decomposition $G=KA^+K$ holds.
If $\kkk=\R$ (resp.\ if $\kkk=\nolinebreak\C$) it follows from the polar decomposition in $\GLnR$ (resp.\ in $\GLnC$) and from the reduction of symmetric (resp.\ Hermitian) matrices.
If $\kkk$ is non-Archimedean, it follows from the structure theorem for finitely generated modules over a principal ideal domain.
The real vector space
$$E = \big\{ (x_1,\ldots,x_n)\in\R^n,\ x_1+\ldots+x_n=0\big\}\ \simeq\ \R^{n-1}$$
and its closed convex cone
$$E^+ = \big\{ (x_1,\ldots,x_n)\in E,\ x_1\geq\ldots\geq x_n\big\} $$
do not depend on~$\kkk$.
Let $\mu : G\rightarrow E^+$ denote the Cartan projection with respect to the Cartan decomposition~$G=KA^+K$.
If $\kkk=\R$ or~$\C$, then $\mu(g)=(\frac{1}{2}\log x_i)_{1\leq i\leq n}$ where $x_i$ is the $i$-th eigenvalue of~$^t\!\overline{g}g$.
If $\kkk$ is non-Archimedean and if $m$ is any integer such that $\pi^mg\in\M_n(\mathcal{O})$, then $\mu(g)=(\omega(x_{m,i})-m)_{1\leq i\leq n}$ where $x_{m,i}$ is the $i$-th invariant factor of~$\pi^mg$.

Fix a simple root $\alpha=\varepsilon_{i_0}-\varepsilon_{i_0+1}\in\Delta$.
The parabolic group~$\mathbf{P}_{\alpha}$ is defined by the vanishing of the $(i,j)$-matrix entries for $1\leq j\leq i_0<i\leq n$.
The flag variety~$\mathbf{G}/\mathbf{P}_{\alpha}$ is the Grassmannian~$\mathcal{G}(i_0,n)$ of $i_0$-dimensional subspaces of the affine space~$\mathbb{A}^n$.
The Lie algebra~$\n_{\alpha}^-$ is defined by the vanishing of the $(i,j)$-matrix entries for $1\leq i\leq i_0$ and for $i_0+1\leq i,j\leq n$.
The decomposition
$$\mathbf{G}/\mathbf{P}_{\alpha} = \coprod_{w\in W/W_{\alpha}} \mathbf{N}_{\alpha}^- w \mathbf{P}_{\alpha}$$
is the decomposition of the Grassmannian~$\mathcal{G}(i_0,n)$ into Schubert cells.
The representation~$(\rho_{\alpha},V_{\alpha})$ is the natural representation of~$\mathbf{SL}_n$ in the wedge product~$\Lambda^{i_0}\A^n$.
Its highest weight is the fundamental weight
$$\omega_{\alpha}=\varepsilon_1+\ldots+\varepsilon_{i_0}$$
associated with~$\alpha$.
The embedding of the Grassmannian~$\mathcal{G}(i_0,n)$ into the projective space $\PP(V_{\alpha})=\PP(\Lambda^{i_0}\A^n)$ is the Plücker embedding.

\section{Dynamics in projective spaces}\label{Dynamique proximale}

In this section we look at the dynamics of certain endomorphisms of \linebreak $\kkk$-vector spaces in the corresponding projective spaces, where $\kkk$ is a local field.
In Subsection~\ref{Proximalite et normes} we start by recalling the notion of proximality.
We then consider products of the form $z_1k_2z_2\ldots k_nz_n$, where the~$z_i$ are proximal elements with a common attracting fixed point~$x_0^+$ and a common repelling hyperplane~$X_0^-$, and the~$k_i$ are isometries such that $k_i\cdot x_0^+$ remains bounded away from~$X_0^-$.
We estimate the contraction power of such a product in terms of the contraction powers of the~$z_i$.
In Subsection~\ref{Projection de Cartan and poids fondamentaux} we consider a connected reductive algebraic $\kkk$-group~$\mathbf{G}$ and apply the result of Subsection~\ref{Proximalite et normes} to the representations~$(V_{\alpha},\rho_{\alpha})$ of~$\mathbf{G}$ introduced in Subsection~\ref{Representations de G}.
From (\ref{norme des representations and Cartan projection, cas reel}) and~(\ref{norme des representations and Cartan projection, cas ultrametrique}) we get an upper bound for $|\langle\chi_{\alpha},\mu(g_1\ldots g_n)-\mu(g_1)-\ldots-\mu(g_n)\rangle|$ for elements $g_1,\ldots,g_n\in G$ satisfying certain contractivity and transversality conditions.

\subsection{Proximality in projective spaces and norm estimates}\label{Proximalite et normes}

Let $\kkk$ be a local field and $V$ be a finite-dimensional vector space over~$\kkk$.
Given a basis~$(v_1,\ldots,v_n)$ of~$V$, we define the norm
\begin{equation}\label{definition norme avec une base}
\bigg\Vert\sum_{1\leq j\leq n} t_j\,v_j\bigg\Vert_{_V} = \sup_{1\leq j\leq n} |t_j|
\end{equation}
on~$V$, and we keep the notation~$\Vert\cdot\Vert_{_V}$ for the corresponding operator norm on~$\End(V)$.
We endow the projective space~$\PP(V)$ with the distance
$$d(x_1,x_2) = \inf\big\{ \Vert v_1-v_2\Vert_{_V},\ v_i\in x_i\ \mathrm{and}\ \Vert v_i\Vert_{_V} =1\ \forall i=1,2\big\} .$$
Recall that an element $g\in\End(V)\smallsetminus\{ 0\} $ is called \textit{proximal} if it has a unique eigenvalue of maximal absolute value and if this eigenvalue has multiplicity~1.
(The eigenvalues of~$g$ belong to a finite extension~$\kkk_g$ of~$\kkk$ and we consider the unique extension to~$\kkk_g$ of the absolute value~$|\cdot|$ on~$\kkk$.)
If~$g$ is proximal, then its maximal eigenvalue belongs to~$\kkk$; we denote by $x_g^+\in\PP(V)$ the corresponding eigenline and by~$X_g^-$ the image in~$\PP(V)$ of the unique $g$-invariant complementary subspace of~$x_g^+$ in~$V$.
Note that $g$ acts on~$\PP(V)$ by contracting $\PP(V)\smallsetminus X_g^-$ towards~$x_g^+$.
For~$\varepsilon>0$, we will say that $g$ is $\varepsilon$-\textit{proximal} if it satisfies the two following additional conditions:
\begin{enumerate}
	\item $d(x_g^+,X_g^-)\geq 2\varepsilon$,
	\item for any $x\in\PP(V)$, if $d(x,X_g^-)\geq\varepsilon$, then $d(g\cdot x,x_g^+)\leq\varepsilon$.
\end{enumerate}
We will need the following lemma.

\begin{lem}\label{produits d'elements proximaux decales}
Let~$X_0^-$ be a projective hyperplane of~$\PP(V)$, let $x_0^+\in\PP(V)\smallsetminus\nolinebreak X_0^-$, and let $\varepsilon>0$ such that $d(x_0^+,X_0^-)\geq 2\varepsilon$.
Then there exists~$r_{\varepsilon}>0$ such that for any isometries $k_2,\ldots,k_n\in\End(V)$ with $d(k_i\cdot x_0^+,X_0^-)\geq 2\varepsilon$ and for any $\varepsilon$-proximal endomorphisms $z_1,\ldots,z_n\in\End(V)$ with $x_{z_i}^+=x_0^+$ and $X_{z_i}^-=X_0^-$, inducing a homothety of ratio~$\Vert z_i\Vert_{_V}$ on the line~$x_0^+$, we have
$$e^{-(n-1)\,r_{\varepsilon}}\cdot\prod_{i=1}^n \Vert z_i\Vert_{_V} \,\leq\, \Vert z_1k_2z_2\ldots k_n z_n\Vert_{_V} \,\leq\, \prod_{i=1}^n \Vert z_i\Vert_{_V}.$$
\end{lem}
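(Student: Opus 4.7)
The upper bound is immediate: each $k_i$ has operator norm $\Vert k_i\Vert_{_V}=1$, so submultiplicativity of $\Vert\cdot\Vert_{_V}$ on $\End(V)$ gives $\Vert z_1k_2z_2\cdots k_nz_n\Vert_{_V}\leq\prod_{i=1}^n\Vert z_i\Vert_{_V}$. For the lower bound, I would fix a unit vector $e_+$ spanning the line $x_0^+$, let $W\subset V$ be the hyperplane with $\PP(W)=X_0^-$, and evaluate the product on $e_+$. Since every $z_i$ preserves both $x_0^+$ and $W$, acting on $x_0^+$ as multiplication by some scalar $\lambda_i\in\kkk$ with $|\lambda_i|=\Vert z_i\Vert_{_V}$, the analysis reduces to tracking the vectors $u_j:=(z_jk_{j+1}z_{j+1}\cdots k_nz_n)(e_+)$ for $j$ running from $n$ down to $1$, together with the trivial bound $\Vert u_1\Vert_{_V}\leq\Vert z_1k_2\cdots k_nz_n\Vert_{_V}$.

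The key estimate is a downward induction on $j$ proving simultaneously that (i)~$[u_j]$ lies within projective distance $\varepsilon$ of $x_0^+$, and (ii)~$\Vert u_j\Vert_{_V}\geq c_{\varepsilon}^{n-j}\prod_{i=j}^n\Vert z_i\Vert_{_V}$ for some constant $c_{\varepsilon}>0$ depending only on $\varepsilon$ and on the ambient norm. The base case $j=n$ is trivial since $u_n=\lambda_n\,e_+$. For the inductive step, (i)~for $u_{j+1}$ together with $d(k_{j+1}\cdot x_0^+,X_0^-)\geq 2\varepsilon$ and the isometry property of $k_{j+1}$ gives $d([k_{j+1}u_{j+1}],X_0^-)\geq\varepsilon$. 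The $\varepsilon$-proximality of $z_j$ then yields (i)~for $u_j$ directly. For (ii), I decompose $k_{j+1}u_{j+1}=a_{j+1}\,e_+ + w_{j+1}$ along $V=\kkk\,e_+\oplus W$: the transversality $d(x_0^+,X_0^-)\geq 2\varepsilon$ controls the norms of the two projections onto this splitting and forces $|a_{j+1}|\geq c'_{\varepsilon}\Vert u_{j+1}\Vert_{_V}$. Applying $z_j$, which preserves this splitting and acts as the homothety $\lambda_j$ on $\kkk\,e_+$, the $\kkk\,e_+$-component of $z_jk_{j+1}u_{j+1}$ has norm $|\lambda_j||a_{j+1}|=\Vert z_j\Vert_{_V}|a_{j+1}|$, giving $\Vert u_j\Vert_{_V}\geq c_{\varepsilon}\Vert z_j\Vert_{_V}\Vert u_{j+1}\Vert_{_V}$. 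Setting $r_{\varepsilon}:=-\log c_{\varepsilon}\geq 0$ and specializing to $j=1$ delivers the claim.

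The main obstacle, and the only point requiring care, is the quantitative comparison between the projective distance and the size of the $\kkk\,e_+$-component. The norm $\Vert\cdot\Vert_{_V}$ is defined via an arbitrary basis of $V$, which is in no way adapted to the splitting $V=\kkk\,e_+\oplus W$; extracting the positive constant $c_{\varepsilon}$ in (ii)~therefore requires bounding the operator norms of the two associated projections purely in terms of the hypothesis $d(x_0^+,X_0^-)\geq 2\varepsilon$. In the ultrametric case the sup-norm structure makes this comparison very clean, whereas in the Archimedean case one invokes continuity of the projections; in both cases the dependence is on $\varepsilon$ alone, which is exactly what is needed for $c_{\varepsilon}$ to remain uniform in the number $n$ of factors so that the announced bound $e^{-(n-1)r_{\varepsilon}}\prod\Vert z_i\Vert_{_V}$ holds.
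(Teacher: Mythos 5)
Your proposal is correct and follows essentially the same route as the paper: a descending induction that tracks the image of a unit vector spanning $x_0^+$ under the partial products $z_j k_{j+1}\cdots k_n z_n$, using $\varepsilon$-proximality and the transversality hypothesis $d(k_i\cdot x_0^+,X_0^-)\geq 2\varepsilon$ to keep that image in a fixed neighborhood of $x_0^+$, and a compactness/continuity argument to produce the uniform constant controlling the $x_0^+$-component of vectors whose class is bounded away from $X_0^-$. (The paper packages this last point as a single two-sided estimate on the coefficient of $v_0$ for unit vectors in $B_\varepsilon$, which is what your $c'_\varepsilon$ together with the projection-norm bound amounts to.)
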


\medskip

\begin{proof}
Since the operator norm~$\Vert\cdot\Vert_{_V}$ on~$\End(V)$ is submultiplicative and $k_i$ is an isometry of~$V$ for all~$i$,
$$\Vert z_1k_2z_2\ldots k_n z_n\Vert_{_V} \,\leq\, \prod_{i=1}^n \Vert z_i\Vert_{_V}.$$
Let us prove the left-hand inequality.
Let $v_0\in V\smallsetminus\{ 0\} $ satisfy $x_0^+=\kkk v_0$ and let~$V_0$ be the hyperplane of~$V$ such that $X_0^-=\PP(V_0)$.
Set
\begin{eqnarray*}
b_{\varepsilon} & = & \{ x\in\PP(V),\ d(x,x_0^+)\leq\varepsilon\} \quad\quad\ \,\\
\mathrm{and}\quad\quad B_{\varepsilon} & = & \{ x\in\PP(V),\ d(x,X_0^-)\geq\varepsilon\} .\quad\quad\ \,
\end{eqnarray*}
Note that the set of unitary vectors~$v\in V$ with $\kkk v\in B_{\varepsilon}$ is compact and that the map sending~$v\in V$ to $t\in\kkk$ such that $v\in tv_0+V_0$ is continuous, hence there exists~$r_{\varepsilon}>0$ such that
\begin{equation}\label{definition de r_epsilon}
v\in \big[e^{-\frac{r_{\varepsilon}}{2}},e^{\frac{r_{\varepsilon}}{2}}\big]\,\Vert v\Vert_{_V}\,v_0 + V_0
\end{equation}
for all~$v\in V\smallsetminus\{ 0\} $ with $\kkk v\in B_{\varepsilon}$.
Set
$$h_j = z_jk_{j+1}z_{j+1}\ldots k_nz_n\in\End(V)$$
for $1\leq j\leq n$.
We claim that $h_j\cdot B_{\varepsilon}\subset b_{\varepsilon}$ and
\begin{equation}\label{inegalite h_j}
\Vert h_j\cdot v_0\Vert_{_V} \,\geq\, e^{-(n-j)\,r_{\varepsilon}}\cdot\prod_{i=j}^n \Vert z_i\Vert_{_V}
\end{equation}
for all~$j$.
This follows from an easy descending induction on~$j$.
Indeed, for all~$i$ we have $k_i\cdot b_{\varepsilon}\subset B_{\varepsilon}$ since $k_i$ is an isometry of~$V$ and $d(k_i\cdot x_0^+,X_0^-)\geq 2\varepsilon$, and $z_i\cdot B_{\varepsilon}\subset b_{\varepsilon}$ since $z_i$ is $\varepsilon$-proximal with $x_{z_i}^+=x_0^+$ and $X_{z_i}^-=X_0^-$.
By~(\ref{definition de r_epsilon}), we have $k_{j+1}h_{j+1}\cdot v_0\in t_jv_0+V_0$ for some $t_j\in\R$ with
$$|t_j|\ \geq\ e^{-\frac{r_{\varepsilon}}{2}}\,\Vert k_{j+1}h_{j+1}\cdot v_0\Vert_{_V}\ =\ e^{-\frac{r_{\varepsilon}}{2}}\,\Vert h_{j+1}\cdot v_0\Vert_{_V}.$$
By the inductive assumption,
$$|t_j|\ \geq\ e^{-(n-j-\frac{1}{2})\,r_{\varepsilon}}\cdot\prod_{i=j+1}^n \Vert z_i\Vert_{_V}.$$
By hypothesis, $z_j$ preserves~$V_0$ and induces a homothety of ratio~$\Vert z_j\Vert_{_V}$ on the line~$x_0^+$, hence $h_j\cdot v_0=z_jk_{j+1}h_{j+1}\cdot v_0\in\Vert z_j\Vert_{_V}\,t_jv_0+V_0$, where
$$\Vert z_j\Vert_{_V}\,|t_j|\ \geq\ e^{-(n-j-\frac{1}{2})\,r_{\varepsilon}}\cdot\prod_{i=j}^n \Vert z_i\Vert_{_V}.$$
Inequality~(\ref{inegalite h_j}) follows, using~(\ref{definition de r_epsilon}) again.
\end{proof}

\subsection{Cartan projection along the fundamental weights}\label{Projection de Cartan and poids fondamentaux}

Lemma~\ref{produits d'elements proximaux decales} implies the following result.

\begin{prop}\label{mu d'un produit transverse selon alpha}
Let~$\kkk$ be a local field and $G$ the set of $\kkk$-points of a connected reductive algebraic $\kkk$-group.
Let $G=KA^+K$ or $G=KZ^+K$ be a Cartan decomposition and $\mu : G\rightarrow E^+$ the corresponding Cartan projection.
Fix~$\alpha\in\Delta$ and let $\mathcal{C}_{\alpha}$ be a compact subset of~$N_{\alpha}^-$.
Then there exist $r_{\alpha},R_{\alpha}>0$ such that for all $g_1,\ldots,g_n\in G$ with $\langle\alpha,\mu(g_i)\rangle\geq R_{\alpha}$ and $\ell_{g_i}k_{g_{i+1}}\in\mathcal{C}_{\alpha} P_{\alpha}$, we have
$$\bigg|\Big\langle\chi_{\alpha},\mu(g_1\ldots g_n) - \sum_{i=1}^n \mu(g_i)\Big\rangle\bigg| \leq n r_{\alpha}.$$
\end{prop}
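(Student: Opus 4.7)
My plan is to translate the Cartan-projection estimate into a multiplicative norm estimate via the representation $(\rho_\alpha,V_\alpha)$ of Subsection~\ref{Representations de G}, and then invoke Lemma~\ref{produits d'elements proximaux decales}. Formulas (\ref{norme des representations and Cartan projection, cas reel}) and (\ref{norme des representations and Cartan projection, cas ultrametrique}) identify $\langle\chi_\alpha,\mu(g)\rangle$ with $\log_\beta\Vert\rho_\alpha(g)\Vert_\alpha$, where $\beta=e$ if $\kkk$ is Archimedean and $\beta=q$ otherwise, so the proposition is equivalent to
$$\beta^{-nr_\alpha}\,\prod_{i=1}^n\Vert\rho_\alpha(g_i)\Vert_\alpha \;\leq\; \Vert\rho_\alpha(g_1\cdots g_n)\Vert_\alpha \;\leq\; \prod_{i=1}^n\Vert\rho_\alpha(g_i)\Vert_\alpha.$$
The upper inequality is just submultiplicativity of the operator norm; the real content is the lower one, which I will extract from Lemma~\ref{produits d'elements proximaux decales}.

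The first step is to use the Cartan decompositions $g_i = k_{g_i}a_{g_i}\ell_{g_i}$ and set $m_i := \ell_{g_{i-1}}k_{g_i}\in K$ for $2\leq i\leq n$, so that
$$g_1\cdots g_n \;=\; k_{g_1}\,a_{g_1}\,m_2\,a_{g_2}\,m_3\cdots m_n\,a_{g_n}\,\ell_{g_n}.$$
Since $\rho_\alpha(k_{g_1})$ and $\rho_\alpha(\ell_{g_n})$ are isometries of $(V_\alpha,\Vert\cdot\Vert_\alpha)$, it suffices to bound the norm of the middle product. I then plan to apply Lemma~\ref{produits d'elements proximaux decales} with $V := V_\alpha$ carrying the $K$-invariant norm $\Vert\cdot\Vert_\alpha$ (the proof of the lemma goes through verbatim for any such norm making the weight spaces orthogonal), with $x_0^+ := x_\alpha^+$ the highest weight line, with $X_0^- := \PP\big(\bigoplus_{\lambda\neq\chi_\alpha}(V_\alpha)_\lambda\big)$, and with $z_i := \rho_\alpha(a_{g_i})$, $k_i := \rho_\alpha(m_i)$. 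The routine hypotheses are easy to check: each $z_i$ is diagonal in the weight basis and acts on $x_\alpha^+$ by a homothety whose ratio has absolute value $\Vert z_i\Vert_\alpha = \Vert\rho_\alpha(g_i)\Vert_\alpha$; each $k_i$ is an isometry since $m_i\in K$; and since $m_i\in \mathcal{C}_\alpha P_\alpha$ while $P_\alpha$ fixes $x_\alpha^+$, the point $k_i\cdot x_\alpha^+$ lies in the compact subset $\mathcal{C}_\alpha\cdot x_\alpha^+$ of the open Bruhat cell $N_\alpha^-\cdot x_\alpha^+$, hence at uniform positive distance $\geq 2\varepsilon_\alpha$ from $X_0^-$.

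The hard part, which pins down the threshold $R_\alpha$, is verifying that $z_i = \rho_\alpha(a_{g_i})$ is $\varepsilon_\alpha$-proximal as soon as $\langle\alpha,\mu(g_i)\rangle \geq R_\alpha$. Since $z_i$ acts on $(V_\alpha)_\lambda$ with eigenvalue of modulus $\beta^{\langle\lambda,\mu(g_i)\rangle}$, the ratio of the second-largest modulus to the largest equals $\beta^{-\min_{\lambda\neq\chi_\alpha}\langle\chi_\alpha-\lambda,\mu(g_i)\rangle}$. Here I invoke the representation-theoretic fact that, for every weight $\lambda\neq\chi_\alpha$ of $V_\alpha$, the expansion $\chi_\alpha - \lambda = \sum_{\beta\in\Delta}n_\beta\beta$ has $n_\beta\in\N$ with $n_\alpha\geq 1$. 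This holds because $\chi_\alpha$ is a positive multiple of the fundamental weight $\omega_\alpha$: the line $x_\alpha^+$ is stabilised by all of $P_\alpha$ and the derived Levi acts trivially on it, so $V_\alpha$ is generated from $x_\alpha^+$ purely by the action of $\n_\alpha^-$, whose roots all have $\alpha$-coefficient $\leq -1$. Since $\mu(g_i)\in E^+$ makes $\langle\beta,\mu(g_i)\rangle \geq 0$ for every $\beta\in\Delta$, we conclude $\langle\chi_\alpha - \lambda,\mu(g_i)\rangle \geq \langle\alpha,\mu(g_i)\rangle \geq R_\alpha$, and enlarging $R_\alpha$ drives the eigenvalue-gap ratio below any prescribed $\varepsilon_\alpha$. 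Lemma~\ref{produits d'elements proximaux decales} then produces $\Vert\rho_\alpha(g_1\cdots g_n)\Vert_\alpha \geq e^{-(n-1)r_{\varepsilon_\alpha}}\prod_i\Vert\rho_\alpha(g_i)\Vert_\alpha$; taking logarithms in base $\beta$ and setting $r_\alpha := r_{\varepsilon_\alpha}/\log\beta$ (understood as $r_{\varepsilon_\alpha}$ in the Archimedean case) completes the argument.
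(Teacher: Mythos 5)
Your overall strategy is the same as the paper's: translate the $\langle\chi_\alpha,\mu(\cdot)\rangle$-estimate into a norm estimate on $\rho_\alpha$ via formulas~(\ref{norme des representations and Cartan projection, cas reel})--(\ref{norme des representations and Cartan projection, cas ultrametrique}) and apply Lemma~\ref{produits d'elements proximaux decales}, and the bookkeeping (Cartan decompositions, $z_i=\rho_\alpha(z_{g_i})$, $k_i=\rho_\alpha(\ell_{g_{i-1}}k_{g_i})$, rescaling $r_\alpha=r_{\varepsilon_\alpha}/\log\beta$) matches the paper exactly; your side remark about the norm in Lemma~\ref{produits d'elements proximaux decales} being stated for a sup-norm but applied with the $K$-invariant norm $\Vert\cdot\Vert_\alpha$ is a legitimate observation that the paper leaves implicit. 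Where you diverge is in how you justify what the paper isolates as Lemma~\ref{remarques V_alpha}. For the eigenvalue-gap claim you argue that $V_\alpha$ is generated from $x_\alpha^+$ by $\n_\alpha^-$ alone (since all of $\p_\alpha$ stabilises the highest weight line), so that every weight $\lambda\neq\chi_\alpha$ satisfies $\chi_\alpha-\lambda-\alpha\in\N\Delta$; the paper instead uses the $W_\alpha$-invariance of the weight set to show that any $\lambda\in\chi_\alpha-\N(\Delta\smallsetminus\{\alpha\})$ equals $\chi_\alpha$. Both prove the same thing, though the paper's Weyl-group argument is a bit more robust over fields of positive characteristic (where the PBW-flavoured statement you use should really be phrased via the distribution algebra of $\mathbf{G}$ rather than $U(\g)$). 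More significantly, for the transversality claim $\rho_\alpha(N_\alpha^-)\cdot x_\alpha^+\cap X_\alpha^-=\emptyset$ you simply assert it by labelling $N_\alpha^-\cdot x_\alpha^+$ ``the open Bruhat cell,'' but $X_\alpha^-$ is a projective hyperplane of $\PP(V_\alpha)$, not a priori a union of lower Bruhat cells of $G/P_\alpha$, so this disjointness does need an argument. The paper supplies one as Lemma~\ref{remarques V_alpha}(2), via the observation that $1$ lies in the closure of $\{znz^{-1}:z\in Z\}$ for $n\in N_\alpha^-$, while $X_\alpha^-$ is closed, $Z$-stable, and avoids $x_\alpha^+$. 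That step is a genuine omission in your write-up, though the gap is easy to fill.
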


We keep notation from Section~\ref{Preliminaires}.
In particular, for $g\in G$ we write $g = k_g z_g \ell_g$ with $k_g,\ell_g\in K$ and $z_g\in Z^+$, as in Subsection~\ref{Projection de Cartan}.
Given a simple root $\alpha\in\Delta$, we denote by~$\mathbf{N}_{\alpha}^-$ (resp.\ by~$\mathbf{P}_{\alpha}$) the unipotent (resp.\ parabolic) subgroup of~$\mathbf{G}$ introduced in Subsection~\ref{Sous-groupes paraboliques maximaux}, and by~$\chi_{\alpha}$ the highest weight of the representation $(V_{\alpha},\rho_{\alpha})$ introduced in Subsection~\ref{Representations de G}.

Precisely, Proposition~\ref{mu d'un produit transverse selon alpha} follows from Lemma~\ref{produits d'elements proximaux decales}, from (\ref{norme des representations and Cartan projection, cas reel}) and~(\ref{norme des representations and Cartan projection, cas ultrametrique}), and from the following lemma.

\begin{lem}\label{remarques V_alpha}
Let $x_{\alpha}^+\in\PP(V_{\alpha})$ be the highest weight line~$(V_{\alpha})_{\chi_{\alpha}}$, and let $X_{\alpha}^-$ be the image in~$\PP(V_{\alpha})$ of the sum of the weight spaces~$(V_{\alpha})_{\lambda}$ for $\lambda\in\Lambda_{\alpha}\smallsetminus\{ \chi_{\alpha}\} $.
\begin{enumerate}
	\item Given $\varepsilon>0$ with $d(x_{\alpha}^+,X_{\alpha}^-)\geq 2\varepsilon$, there exists~$R_{\alpha}>0$ such that for any $z\in Z^+$ with $\langle\alpha,\mu(z)\rangle\geq R_{\alpha}$, the element $\rho_{\alpha}(z)$ is $\varepsilon$-proximal in~$\PP(V_{\alpha})$ with $\big(x_{\rho_{\alpha}(z)}^+,X_{\rho_{\alpha}(z)}^-\big)=(x_{\alpha}^+,X_{\alpha}^-)$.
  \item We have $\rho_{\alpha}(N_{\alpha}^-)(x_{\alpha}^+)\cap X_{\alpha}^-=\emptyset$.
\end{enumerate}
\end{lem}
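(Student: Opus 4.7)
\medskip

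\noindent\emph{Proof plan.} The idea is to read both statements off the weight-space decomposition $V_\alpha = \bigoplus_{\lambda\in\Lambda_\alpha} (V_\alpha)_\lambda$ of Subsection~\ref{Representations de G}. Any $z\in Z^+$ acts on $(V_\alpha)_\lambda$ by a scalar of absolute value $e^{\langle\lambda,\mu(z)\rangle}$ in the Archimedean case (resp.\ $q^{\langle\lambda,\mu(z)\rangle}$ in the non-Archimedean case), so both $x_\alpha^+$ and $X_\alpha^-$ are $\rho_\alpha(z)$-invariant, and the ratio of the eigenvalue on $x_\alpha^+$ to the operator norm of $\rho_\alpha(z)|_{X_\alpha^-}$ is governed by $\min_{\lambda\neq\chi_\alpha}\langle\chi_\alpha-\lambda,\mu(z)\rangle$.

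The decisive combinatorial input for~(1) is that every $\lambda\in\Lambda_\alpha\smallsetminus\{\chi_\alpha\}$ satisfies $n_\alpha\geq 1$ in the expansion $\chi_\alpha-\lambda=\sum_{\beta\in\Delta}n_\beta\,\beta$. I would establish this by noting that $\chi_\alpha=c\,\omega_\alpha$ pairs to zero with $\check{\beta}$ for every $\beta\in\Delta\smallsetminus\{\alpha\}$, so the highest weight vector $v_0$ is fixed by the semisimple part of the Levi of $\mathbf{P}_\alpha$; any weight $\lambda$ reachable from $v_0$ using only Levi root-lowering operators (those corresponding to positive roots supported on $\Delta\smallsetminus\{\alpha\}$) must therefore equal $\chi_\alpha$ itself. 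Combined with $\langle\beta,\mu(z)\rangle\geq 0$ for $\beta\in\Delta$ and $z\in Z^+$, this yields
\[\langle\chi_\alpha-\lambda,\mu(z)\rangle \,\geq\, \langle\alpha,\mu(z)\rangle \,\geq\, R_\alpha\]
for every $\lambda\neq\chi_\alpha$. For $R_\alpha$ sufficiently large this identifies $x_{\rho_\alpha(z)}^+$ with $x_\alpha^+$ and $X_{\rho_\alpha(z)}^-$ with $X_\alpha^-$; using the orthogonality of weight spaces with respect to the norm chosen in Subsection~\ref{Representations de G}, the second defining condition of $\varepsilon$-proximality is forced, while the first is given by the hypothesis $d(x_\alpha^+,X_\alpha^-)\geq 2\varepsilon$.

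For part~(2), I fix a highest weight vector $v_0$ spanning $x_\alpha^+$. For every $n\in N_\alpha^-$ the operator $\rho_\alpha(n)$ is unipotent, and since $\n_\alpha^-$ is a sum of root spaces for strictly negative roots one has $\rho_\alpha(n)v_0\in v_0+\bigoplus_{\lambda\neq\chi_\alpha}(V_\alpha)_\lambda$. Hence the $(V_\alpha)_{\chi_\alpha}$-component of $\rho_\alpha(n)v_0$ is the non-zero vector $v_0$, so the line $\rho_\alpha(n)(x_\alpha^+)$ cannot be contained in $X_\alpha^-$.

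The main obstacle I expect is the combinatorial step in~(1); once it is in hand, the quantitative estimates for $\varepsilon$-proximality are routine computations in an orthogonal basis of weight vectors, and part~(2) is immediate from the unipotent action on $v_0$.
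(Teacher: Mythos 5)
Your proposal is correct, and it reaches both conclusions by a genuinely different route than the paper. For part~(1), the paper's key claim is exactly yours --- every weight $\lambda\neq\chi_\alpha$ lies in $\chi_\alpha-\alpha-\N\Delta$ --- but the paper proves it via the Weyl group: the subgroup $W_\alpha$ generated by the $s_\beta$, $\beta\in\Delta\smallsetminus\{\alpha\}$, fixes $\chi_\alpha$, so any weight $\lambda\in\chi_\alpha-\N(\Delta\smallsetminus\{\alpha\})$ has a $W_\alpha$-conjugate in $\chi_\alpha+\N\Delta$, forcing $\lambda=\chi_\alpha$. Your argument instead works at the module level: the derived group of the Levi fixes the line $x_\alpha^+$ (since $\langle\chi_\alpha,\check\beta\rangle=0$ for $\beta\neq\alpha$), hence the Levi-submodule generated by $v_0$ --- which is the whole $n_\alpha=0$ graded piece of $V_\alpha$ by cyclicity of $v_0$ --- reduces to the line $\kkk v_0$. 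Both are valid; the Weyl-group route sidesteps the care one must take when phrasing $\mathfrak{sl}_2$-string or lowering-operator arguments in terms of restricted roots over a non-split or non-Archimedean $\kkk$, where root spaces $\g_{-\beta}$ may have dimension~$>1$ and exponentials are not literally available --- so if you want to keep your argument it is safer to phrase it as you do at the end (``the Levi-module generated by $v_0$ is $\kkk v_0$'') rather than via individual lowering operators. For part~(2), your computation $\rho_\alpha(n)v_0\in v_0+\bigoplus_{\lambda\neq\chi_\alpha}(V_\alpha)_\lambda$ is a direct, affine-chart style argument (and note it quietly uses part~(1), since one needs a cocharacter that is strictly positive on all $\chi_\alpha-\lambda$, $\lambda\neq\chi_\alpha$, to justify the triangularity); the paper instead uses a soft dynamical argument, observing that $1$ lies in the closure of $\{znz^{-1}:z\in Z\}$ and that $X_\alpha^-$ is closed, $Z$-stable, and misses $x_\alpha^+$. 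Your version is more explicit and constructive; the paper's avoids any reference to the weight-graded structure of $V_\alpha$ beyond what is already established in part~(1).
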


\begin{proof}
\begin{enumerate}
	\item It is sufficient to see that every restricted weight of~$(\rho_{\alpha},V_{\alpha})$ except~$\chi_{\alpha}$ belongs to $\chi_{\alpha}-\alpha-\N\Delta$.
  Consider the subgroup~$W_{\alpha}$ of~$W$ generated by the reflections $s_{\beta} : x\mapsto x-\langle x,\check{\beta}\rangle\,\beta$ for $\beta\in\Delta\smallsetminus\{ \alpha\} $.
  It acts transitively on the root subsystem of~$\Phi$ generated by~$\Delta\smallsetminus\{ \alpha\} $, and it fixes~$\chi_{\alpha}$ since $\chi_{\alpha}$ is a multiple of~$\omega_{\alpha}$ and $\langle\omega_{\alpha},\check{\beta}\rangle =0$ for all $\beta\in\Delta\smallsetminus\{ \alpha\} $.
  Therefore, for every weight $\lambda\in\chi_{\alpha}-\N(\Delta\smallsetminus\{ \alpha\} )$ there exists $w\in W_{\alpha}$ such that $w\cdot\lambda\in\chi_{\alpha}+\N\Delta$, which implies that $\lambda=\chi_{\alpha}$.
  \item For~$n\in N_{\alpha}^-$, the identity element~$1\in G$ belongs to the closure of the conjugacy class $\{ znz^{-1},\ z\in Z\} $, hence $x_{\alpha}^+$ belongs to the closure of the orbit $\rho_{\alpha}(Zn)(x_{\alpha}^+)$ in~$\PP(V_{\alpha})$.
  But $X_{\alpha}^-$ is closed in~$\PP(V_{\alpha})$, stable under~$Z$, and does not contain~$x_{\alpha}^+$.\qedhere
\end{enumerate}
\end{proof}

\smallskip

\begin{proof}[Proof of Proposition~\ref{mu d'un produit transverse selon alpha}]
The point $x_{\alpha}^+\in\PP(V_{\alpha})$ is fixed by~$P_{\alpha}$.
Moreover, $\rho_{\alpha}(N_{\alpha}^-)(x_{\alpha}^+)\cap X_{\alpha}^-=\emptyset$ by Lemma~\ref{remarques V_alpha}, hence there exists~$\varepsilon>0$ such that
$$d\big(\rho_{\alpha}(\mathcal{C}_{\alpha} P_{\alpha})(x_{\alpha}^+),X_{\alpha}^-\big)\geq 2\varepsilon.$$
Let $R_{\alpha}$ be given by Lemma~\ref{remarques V_alpha} and let $r_{\alpha}=r_{\varepsilon}/\log q$, where $r_{\varepsilon}>0$ is given by Lemma~\ref{produits d'elements proximaux decales} and $q=e$ if $\kkk$ is Archimedean, $q$ is the cardinal of the residue field of~$\mathcal{O}$ otherwise.
Let $g_1,\ldots,g_n\in G$ satisfy $\langle\alpha,\mu(g_i)\rangle\geq R_{\alpha}$ and $\ell_{g_i}k_{g_{i+1}}\in\mathcal{C}_{\alpha} P_{\alpha}$ for all~$i$.
By Lemma~\ref{remarques V_alpha}, $\rho_{\alpha}(z_{g_i})$ is $\varepsilon$-proximal in~$\PP(V_{\alpha})$ with $x_{\rho_{\alpha}(z_{g_i})}^+=x_{\alpha}^+$ and $X_{\rho_{\alpha}(z_{g_i})}^-=X_{\alpha}^-$.
Moreover, it induces a homothety of ratio~$\Vert\rho_{\alpha}(z_{g_i})\Vert_{\alpha}$ on the line~$x_{\alpha}^+$.
By Lemma~\ref{produits d'elements proximaux decales},
$$q^{-nr_{\alpha}}\cdot\prod_{i=1}^n \Vert\rho_{\alpha}(z_{g_i})\Vert_{\alpha}\ \leq\ \Vert\rho_{\alpha}(g_1\ldots g_n)\Vert_{\alpha}\ \leq\ \prod_{i=1}^n \Vert\rho_{\alpha}(z_{g_i})\Vert_{\alpha}.$$
Using~(\ref{norme des representations and Cartan projection, cas reel}) and~(\ref{norme des representations and Cartan projection, cas ultrametrique}), we get
$$\Big\langle\chi_{\alpha},\sum_{i=1}^n \mu(g_i)\Big\rangle - nr_{\alpha}\ \leq\ \langle\chi_{\alpha},\mu(g_1\ldots g_n)\rangle\ \leq\ \Big\langle\chi_{\alpha},\sum_{i=1}^n \mu(g_i)\Big\rangle.\qedhere$$
\end{proof}

\section{Transverse products}\label{Partie produit transverse}

In this section we explain how, under the assumptions of Theorem~\ref{varphi ne change pas beaucoup mu}, Proposition~\ref{mu d'un produit transverse selon alpha} applies to the elements $\gamma\in\nolinebreak\Gamma$ and their images~$\varphi(\gamma)$ under a small deformation $\varphi\in\Hom(\Gamma,G)$.
We use Guichard's idea~\cite{gui} of writing every element~$\gamma\in\Gamma$ as a ``transverse product'' $\gamma_0\ldots\gamma_n$ of elements of a fixed finite subset~$F$ of~$\Gamma$.

\subsection{Transversality in $L$}\label{Transversalite en rang un}

Let~$\kkk$ be a local field and $\mathbf{L}$ a connected reductive algebraic $\kkk$-group of $\kkk$-rank~$1$.
Fix a Cartan decomposition $L=K_LA_L^+K_L$ or $L=K_LZ_L^+K_L$, where $K_L$ is a maximal compact subgroup of~$L$, where $\mathbf{A_L}$ is a maximal $\kkk$-split $\kkk$-torus of~$\mathbf{L}$, and where $\mathbf{Z_L}$ is the centralizer of~$\mathbf{A_L}$ in~$\mathbf{L}$.
Let $\mu_L : L\rightarrow E_L^+$ denote the corresponding Cartan projection, where $E_L=Y(\mathbf{A_L})\otimes_{\Z}\R$.
Since $\mathbf{L}$ has $\kkk$-rank~$1$, the vector space~$E_L$ is a line, and any isomorphism from~$E_L$ to~$\R$ gives a Cartan projection $\mu_L^{\R} : L\rightarrow\R$.

If $\mathbf{L}$ has semisimple $\kkk$-rank~1, then~$\mu_L^{\R}$ takes only nonnegative or only nonpositive values.
We denote by~$\alpha_L$ the indivisible positive restricted root of~$\mathbf{A_L}$ in~$\mathbf{L}$, by $\mathbf{P_L}=\mathbf{P}_{\alpha_L}$ the proper parabolic subgroup of~$\mathbf{L}$ associated with~$\alpha_L$, and by~$\mathbf{N_L^-}=\mathbf{U}_{-\alpha_L}$ the unipotent subgroup associated with~$-\alpha_L$.

If $\mathbf{L}$ has semisimple $\kkk$-rank~$0$, then~$\mathbf{A_L}$ is central in~$\mathbf{L}$, hence $\mathbf{Z_L}=\mathbf{L}$.
In this case $\mu_L^{\R}$ is a group homomorphism from~$L$ to~$\R$, thus taking both positive and negative values.
We set $\mathbf{P_L}=\mathbf{Z_L}=\mathbf{L}$ and $\mathbf{N_L^-}=\{ 1\} $.

For the reader's convenience, we give a proof of the following result, which is due to Guichard in the real semisimple case (\cite{gui}, Lem.~7 \&~9).
We consider the more general situation of a reductive algebraic group over a local field.

\begin{prop}[Guichard]\label{produit d'elements transverses}
Let $\kkk$ be a local field, $L$ the set of $\kkk$-points of a connected reductive algebraic $\kkk$-group of $\kkk$-rank~$1$, and $\mu_L^{\R} : L\rightarrow\R$ a Cartan projection.
If $\kkk=\R$ or~$\C$, let $\Gamma$ be a convex cocompact subgroup of~$L$; if $\kkk$ is non-Archimedean, let $\Gamma$ be any finitely generated discrete subgroup of~$L$.
Then there exist~$D>0$ and a compact subset~$\mathcal{C}_L$ of~$N_L^-$ such that for $R\geq D$, any~$\gamma\in\Gamma$ may be written as $\gamma=\gamma_0\ldots\gamma_n$, where
\begin{enumerate}
  \item $|\mu_L^{\R}(\gamma_0)|\leq R+D$ and $R-D\leq |\mu_L^{\R}(\gamma_i)|\leq R+D$ for all $1\leq i\leq n$,
	\item $\mu_L^{\R}(\gamma_1),\ldots,\mu_L^{\R}(\gamma_n)$ are all~$\geq 0$ or all~$\leq 0$,
	\item $\ell_{\gamma_i} k_{\gamma_{i+1}} \in \mathcal{C}_L P_L$ for all $1\leq i\leq n-1$.
\end{enumerate}
\end{prop}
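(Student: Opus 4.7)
The plan is geometric: I would interpret $\mu_L^{\R}$ (up to a positive scalar $c$) as the displacement for the action of $L$ on its rank-one model space $X_L$---the Riemannian symmetric space $L/K_L$ when $\kkk$ is Archimedean, the Bruhat--Tits tree of $L$ when $\kkk$ is non-Archimedean. In either case $X_L$ is a proper Gromov-hyperbolic geodesic space, $L$ acts by isometries, and $|\mu_L^{\R}(g)|=c\cdot d(x_0,g\cdot x_0)$ with $x_0=K_L$. The flag variety $L/P_L$ identifies $L$-equivariantly with the visual boundary $\partial_\infty X_L$, and the Cartan coordinates $(k_g,\ell_g)$ of $g=k_g z_g \ell_g$ encode, respectively, the endpoints at infinity of the geodesic segment $[x_0,g\cdot x_0]$ at its two ends (read in $L/P_L$). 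Under this dictionary, condition~(3)---namely $\ell_{\gamma_i}k_{\gamma_{i+1}}\in\mathcal{C}_L P_L$ for some fixed compact $\mathcal{C}_L\subset N_L^-$---is equivalent to a uniform transversality statement for the pair $(k_{\gamma_{i+1}}P_L,\ell_{\gamma_i}^{-1}P_L)\in(L/P_L)^2$: it should lie in a fixed compact subset of the open Bruhat cell, away from the closed cell $\{P_L\}$.

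The semisimple-rank-zero degenerate case is immediate: $\mathbf{N_L^-}=\{1\}$, so condition~(3) is vacuous, $\mu_L^{\R}:L\to\R$ is a group homomorphism, and a finitely generated discrete subgroup of the one-dimensional $\kkk$-split torus $L$ is virtually cyclic, so $\gamma\in\Gamma$ factorizes trivially as a product of powers of a generator. Assume therefore that $\mathbf{L}$ has semisimple $\kkk$-rank~$1$. For $\gamma\in\Gamma$ with $|\mu_L^{\R}(\gamma)|>R+D$, I would place points $p_0=x_0,\,p_1,\ldots,p_n=\gamma\cdot x_0$ on the geodesic $[x_0,\gamma\cdot x_0]\subset X_L$ with consecutive spacing close to $R/c$. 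Convex cocompactness in the Archimedean case---and, in the non-Archimedean case, the analogous statement that a finitely generated discrete subgroup of $L$ acts cocompactly on its minimal invariant subtree in $X_L$ together with $x_0$ being at bounded distance from it---yields a constant $D_1\geq 0$ independent of $\gamma$ such that each $p_i$ lies within $D_1$ of an orbit point $\eta_i\cdot x_0$ with $\eta_i\in\Gamma$; I impose $\eta_0=1$ and $\eta_n=\gamma$. Setting $\gamma_0=1$ and $\gamma_i=\eta_{i-1}^{-1}\eta_i$ for $i\geq 1$ gives the desired factorization, and the triangle inequality yields $d(\eta_{i-1}\cdot x_0,\eta_i\cdot x_0)\in[R/c-2D_1,\,R/c+2D_1]$, hence condition~(1) after absorbing $2cD_1$ into~$D$; condition~(2) is automatic, since in the semisimple rank-one case $\mu_L^{\R}$ takes a single sign on all of $L$.

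The main step is~(3). By $\delta$-hyperbolicity of $X_L$ and the $D_1$-proximity of the broken path $x_0,\eta_1\cdot x_0,\ldots,\gamma\cdot x_0$ to the macroscopic geodesic $[x_0,\gamma\cdot x_0]$, each segment $[\eta_{i-1}\cdot x_0,\eta_i\cdot x_0]$ fellow-travels the corresponding subsegment $[p_{i-1},p_i]$ up to an error $O(\delta+D_1)$. Consequently, at each interior vertex $\eta_i\cdot x_0$ the ``incoming'' and ``outgoing'' directions of the broken path---which translate via the Cartan dictionary to the boundary classes $\ell_{\gamma_i}^{-1}P_L$ and $k_{\gamma_{i+1}}P_L$ after $\eta_i$-translation---are close to the two opposite tangent directions of the macroscopic geodesic at $p_i$, with an error that shrinks uniformly as $R\to\infty$. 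Choosing $R$ (hence $D$) large compared with $\delta$ and $D_1$ then makes these two boundary points uniformly transverse in $\partial_\infty X_L\simeq L/P_L$, placing $\ell_{\gamma_i}k_{\gamma_{i+1}}$ in a fixed compact subset $\mathcal{C}_L P_L$ of the open Bruhat cell. The main technical obstacle is this quantitative hyperbolicity estimate: balancing the shadow error $D_1$ of the finite approximation against the spacing~$R$ so that a single compact set $\mathcal{C}_L$ works uniformly in $\gamma\in\Gamma$ and in $i$.
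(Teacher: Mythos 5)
Your plan follows the same geometric route as the paper: interpret $|\mu_L^{\R}|$ as displacement in the rank-one model space $X_L$, place equally-spaced orbit points along a geodesic between orbit images, and read off the factorization from the coset decomposition; your derivations of conditions~(1) and~(2) and the treatment of the degenerate semisimple-rank-zero case are essentially the paper's. One small technical difference: the paper works from a base point $x'_0 \in \Int(\D)$ inside the convex hull $X'_L$ of the limit set, so that the segment $[x'_0,\gamma^{-1}\cdot x'_0]$ lies entirely in $X'_L$ and the cocompactness of the $\Gamma$-action gives orbit approximants directly; with $x_0$ (which need not lie in $X'_L$) as in your sketch you would need an additional quasiconvexity step to push $[x_0,\gamma\cdot x_0]$ into a bounded neighbourhood of the orbit.

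The step you flag as ``the main technical obstacle'' is precisely the content of Lemma~\ref{equivalence transversalite}, and the paper resolves it by decoupling two things. First, by elementary triangle inequalities among the orbit points along the segment $I \subset X'_L$, it establishes a near-additivity statement $|\mu_L^{\R}(\gamma_i\gamma_{i+1})| \geq |\mu_L^{\R}(\gamma_i)| + |\mu_L^{\R}(\gamma_{i+1})| - D$, where $D$ depends only on the diameter of the fundamental domain~$\D$ and on $d(x_0,x'_0)$. Second, Lemma~\ref{equivalence transversalite} shows, via the shadow lemma for Gromov-hyperbolic spaces, that this near-additivity forces transversality: if $[k\cdot\mathcal{R}^+]$ lies in a small shadow-neighbourhood of the opposite boundary point $[\mathcal{R}^-]$, then for $z_1,z_2 \in Z_L^+$ with $|\mu_L^{\R}(z_i)| \geq D$ one gets $|\mu_L^{\R}(z_1kz_2)| < |\mu_L^{\R}(z_1)| + |\mu_L^{\R}(z_2)| - D$ directly from the triangle inequality in $X_L$. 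This reformulates the transversality condition as a purely metric inequality, sidestepping the need to track boundary directions and balance their error against $R\to\infty$ as in your sketch; supplying this lemma and its shadow-lemma proof is what closes the gap you acknowledge.
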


To prove Proposition~\ref{produit d'elements transverses} we use the following lemma, which translates the transversality condition~(3) in terms of~$\mu_L^{\R}$.

\begin{lem}\label{equivalence transversalite}
Under the assumptions of Proposition~\ref{produit d'elements transverses}, there exists $D_0\geq\nolinebreak 0$ with the following property: given any~$D\geq D_0$, there is a compact subset~$\mathcal{C}_L$ of~$N_L^-$ such that for $k\in K_L$, if
$$|\mu_L^{\R}(z_1kz_2)|\geq |\mu_L^{\R}(z_1)|+|\mu_L^{\R}(z_2)|-D$$
for some $z_1,z_2\in Z_L^+$ with $|\mu_L^{\R}(z_1)|,|\mu_L^{\R}(z_2)|\geq D$, then $k\in\mathcal{C}_L P_L$.
\end{lem}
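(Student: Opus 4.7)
If $\mathbf{L}$ has semisimple $\kkk$-rank $0$, then $P_L = L$ and $N_L^- = \{1\}$, so the conclusion holds trivially with $\mathcal{C}_L = \{1\}$ and any $D_0 \geq 0$. Assume henceforth that $\mathbf{L}$ has semisimple $\kkk$-rank $1$. The plan is to translate the hypothesis on $\mu_L^{\R}$ into a transversality condition in the projective space $\PP(V_{\alpha_L})$ of the representation $(\rho_{\alpha_L}, V_{\alpha_L})$ from Subsection~\ref{Representations de G}. With the notation of Lemma~\ref{remarques V_alpha}, the orbit map $gP_L \mapsto \rho_{\alpha_L}(g) \cdot x_{\alpha_L}^+$ realizes $L/P_L \hookrightarrow \PP(V_{\alpha_L})$; since $\mathbf{L}$ has semisimple $\kkk$-rank $1$, the Bruhat decomposition gives $L/P_L = (N_L^- \cdot x_{\alpha_L}^+) \sqcup \{s_{\alpha_L} \cdot x_{\alpha_L}^+\}$, with $s_{\alpha_L} \cdot x_{\alpha_L}^+ \in X_{\alpha_L}^-$.

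\textbf{Reduction to transversality.}
Since $L/P_L$ is compact, the parametrization $n \mapsto \rho_{\alpha_L}(n) \cdot x_{\alpha_L}^+$ is a proper bijection from $N_L^-$ onto $L/P_L \setminus \{s_{\alpha_L} \cdot x_{\alpha_L}^+\}$: any divergent sequence in $N_L^-$ has image converging to $s_{\alpha_L} \cdot x_{\alpha_L}^+ \in X_{\alpha_L}^-$. Consequently, for every $\varepsilon > 0$ the set $\{n \in N_L^- : d(\rho_{\alpha_L}(n) \cdot x_{\alpha_L}^+, X_{\alpha_L}^-) \geq \varepsilon\}$ is compact in $N_L^-$, and any $k \in K_L$ whose image $\rho_{\alpha_L}(k) \cdot x_{\alpha_L}^+$ stays a definite distance away from $X_{\alpha_L}^-$ belongs to such a compact set times $P_L$. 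It therefore suffices to produce $D_0 \geq 0$ and, for each $D \geq D_0$, a positive $\varepsilon(D)$, such that the hypothesis of the lemma forces $d(\rho_{\alpha_L}(k) \cdot x_{\alpha_L}^+, X_{\alpha_L}^-) \geq \varepsilon(D)$.

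\textbf{Key estimate.}
Fix a unit vector $v_0^+$ spanning $(V_{\alpha_L})_{\chi_{\alpha_L}}$, and for $k \in K_L$ write $\rho_{\alpha_L}(k) v_0^+ = a_k v_0^+ + w_k$ with $w_k \in \bigoplus_{\lambda \neq \chi_{\alpha_L}} (V_{\alpha_L})_\lambda$; then $|a_k|$ is comparable to $d(\rho_{\alpha_L}(k) \cdot x_{\alpha_L}^+, X_{\alpha_L}^-)$. Expand $\rho_{\alpha_L}(z_1 k z_2)$ in block form with respect to the decomposition $V_{\alpha_L} = (V_{\alpha_L})_{\chi_{\alpha_L}} \oplus \bigoplus_{\lambda \neq \chi_{\alpha_L}} (V_{\alpha_L})_\lambda$, using that $\rho_{\alpha_L}(z_i)$ preserves this decomposition, acts by the scalar $q^{\langle \chi_{\alpha_L}, \mu_L(z_i)\rangle}$ on the first summand, and has operator norm at most $q^{\langle \chi', \mu_L(z_i)\rangle}$ on the second, where $\chi'$ denotes the second-highest weight. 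This yields an upper bound
$$\|\rho_{\alpha_L}(z_1 k z_2)\|_{\alpha_L} \,\leq\, C\,\|\rho_{\alpha_L}(z_1)\|_{\alpha_L}\,\|\rho_{\alpha_L}(z_2)\|_{\alpha_L}\,\bigl(|a_k| + q^{-\delta \min(|\mu_L^{\R}(z_1)|,\,|\mu_L^{\R}(z_2)|)}\bigr),$$
where $\delta > 0$ is the coefficient relating $\langle \chi_{\alpha_L} - \chi', \cdot\rangle$ to $|\mu_L^{\R}|$ on $E_L^+$. On the other hand, the hypothesis of the lemma combined with (\ref{norme des representations and Cartan projection, cas reel}) and (\ref{norme des representations and Cartan projection, cas ultrametrique}) gives the lower bound $\|\rho_{\alpha_L}(z_1 k z_2)\|_{\alpha_L} \geq q^{-\lambda D} \|\rho_{\alpha_L}(z_1)\|_{\alpha_L} \|\rho_{\alpha_L}(z_2)\|_{\alpha_L}$, with $\lambda$ the analogous coefficient for $\chi_{\alpha_L}$. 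Choosing $D_0$ large enough that the second term in the parenthesis is dominated by the first whenever $|\mu_L^{\R}(z_i)| \geq D \geq D_0$ then forces $|a_k| \geq \varepsilon(D) > 0$, as required.

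\textbf{Main obstacle.}
The principal difficulty is to make the block bound sharp enough to conclude in the borderline case where the weight gap $\delta$ does not strictly exceed $\lambda$ (as happens, for instance, for adjoint-type representations). In such cases the naive block estimate must be refined, for example by exploiting the full weight multiplicity structure of $V_{\alpha_L}$ so that $\|\rho_{\alpha_L}(z_1 k z_2)\|_{\alpha_L}$ depends on a genuinely higher power of $|a_k|$; alternatively, one argues directly in the rank-one symmetric space or Bruhat-Tits tree of $L$ via the asymptotic
$$d(x_0, z_1 k z_2 \cdot x_0) = |\mu_L^{\R}(z_1)| + |\mu_L^{\R}(z_2)| + \log d(k \cdot \xi^+, \xi^-) + O(1),$$
where $\xi^\pm$ denote the endpoints at infinity of the $A_L$-axis through the base point $x_0$; this identity bounds the visual transversality, hence $|a_k|$, from below in terms of $D$.
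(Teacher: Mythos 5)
Your primary, representation-theoretic approach contains a genuine gap, which you honestly identify as the ``main obstacle'': the naive block estimate yields
$$\Vert\rho_{\alpha_L}(z_1kz_2)\Vert \leq C\,\Vert\rho_{\alpha_L}(z_1)\Vert\,\Vert\rho_{\alpha_L}(z_2)\Vert\,\big(|a_k| + q^{-\delta\min(|\mu_L^{\R}(z_1)|,|\mu_L^{\R}(z_2)|)}\big),$$
and the lower bound from the hypothesis is $q^{-\lambda D}\,\Vert\rho_{\alpha_L}(z_1)\Vert\,\Vert\rho_{\alpha_L}(z_2)\Vert$. To force $|a_k|$ away from $0$ you need the error term to be strictly smaller than $q^{-\lambda D}$ for large $D$, i.e., $\delta>\lambda$. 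But there is no reason for this to hold. In the rank-one case with restricted root system $A_1$, one has $\omega_{\alpha_L}=\alpha_L/2$, $\chi_{\alpha_L}=c\,\omega_{\alpha_L}$ for some $c>0$, and $\chi_{\alpha_L}-\chi'=m\alpha_L$ for some integer $m\geq 1$, so $\delta/\lambda=2m/c$. For the natural representation of $\SO(n,1)$ one has $c=2$ and $m=1$, so $\delta=\lambda$: the borderline case, where your estimate gives no lower bound on $|a_k|$. The representation-theoretic route throws away the precise geometry (colinearity of $x_0$, $z_i^{\pm 1}x_0$, and the points on the rays $\mathcal{R}^\pm$) that the paper's argument exploits, and the operator-norm bound on a product of three matrices is simply too lossy to recover the transversality information.

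The ``alternative'' you mention in your final sentence --- arguing directly in the rank-one symmetric space or Bruhat--Tits tree of $L$ via an asymptotic of the form $d(x_0,z_1kz_2\cdot x_0) \approx |\mu_L^{\R}(z_1)|+|\mu_L^{\R}(z_2)|+\text{(Gromov product term)}$ --- is essentially the paper's proof, and is the right idea; but you give it only as a one-line afterthought, not as a proof. The paper proceeds by contraposition: identify $L/P_L$ with the visual boundary $\partial X_L$; invoke the shadow lemma to obtain a constant $D_0$ such that the sets $\mathcal{U}_t=\{[\mathcal{R}]:\mathcal{R}(0)=x_0,\ d(\mathcal{R}(t),\mathcal{R}^-(t))<D_0\}$ form a neighborhood basis of $[\mathcal{R}^-]=w\cdot P_L$; and then, for $k\cdot P_L\in\mathcal{U}_D$, apply the plain triangle inequality $d(\mathcal{R}^-(t_1),k\cdot\mathcal{R}^+(t_2)) \leq (t_1-D) + d(\mathcal{R}^-(D),k\cdot\mathcal{R}^+(D)) + (t_2-D)$, using that $z_1^{-1}x_0$, $\mathcal{R}^-(D)$, $x_0$ are colinear and likewise on the other side. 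This yields exactly the strict inequality $|\mu_L^{\R}(z_1kz_2)|<|\mu_L^{\R}(z_1)|+|\mu_L^{\R}(z_2)|-D$, with no $O(1)$ terms and no hyperbolicity constants to chase, so it handles every $L$ of $\kkk$-rank $1$ uniformly. You should abandon the representation-theoretic attack for this lemma and flesh out the geometric argument.
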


\smallskip

Note that Proposition~\ref{mu d'un produit transverse selon alpha} implies some kind of converse to Lemma~\ref{equivalence transversalite}: for any compact subset~$\mathcal{C}_L$ of~$N_L^-$, there exists~$D\geq 0$ such that for all $k\in K_L\cap\mathcal{C}_LP_L$ and all $z_1,z_2\in Z^+$,
$$|\mu_L^{\R}(z_1kz_2)|\geq |\mu_L^{\R}(z_1)|+|\mu_L^{\R}(z_2)|-D.$$

\smallskip

\begin{proof}[Proof of Lemma~\ref{equivalence transversalite}]
We may assume that $\mathbf{L}$ has semisimple $\kkk$-rank~$1$.
Then $L/P_L$ is the disjoint union of $N_L^-\cdot P_L$ and $\{ w\cdot P_L\} $, where $w$ denotes the nontrivial element of the (restricted) Weyl group of~$L$.
It is therefore sufficient to prove the existence of a neighborhood~$\mathcal{U}$ of~$w\cdot P_L$ in~$L/P_L$ such that for all~$k\in K_L$ with $k\cdot P_L\in\mathcal{U}$ and all $z_1,z_2\in Z_L^+$ with $\mu_L^{\R}(z_1),\mu_L^{\R}(z_2)\geq D$, we have
$$|\mu_L^{\R}(z_1kz_2)| < |\mu_L^{\R}(z_1)| + |\mu_L^{\R}(z_2)| - D.$$
Let $X_L$ denote either the Riemannian symmetric space or the Bruhat-Tits tree of~$L$, depending on whether $\kkk$ is Archimedean or not.
The space $X_L$ is Gromov-hyperbolic and we may identify~$L/P_L$ with the boundary at infinity~$\partial X_L$ of~$X_L$, \textit{i.e.}, with the set of equivalence classes~$[\mathcal{R}]$ of geodesic half-lines $\mathcal{R} : [0,+\infty[\rightarrow X_L$ for the equivalence relation ``to stay at bounded distance''.
The point $P_L\in L/P_L$ (resp.\ $w\cdot P_L\in L/P_L$) corresponds to the equivalence class $[\mathcal{R}^+]$ (resp.\ $[\mathcal{R}^-]$) of the geodesic half-line $\mathcal{R}^+ : [0,+\infty[\rightarrow X$ (resp.\ $\mathcal{R}^- : [0,+\infty[\rightarrow X$) whose image is $Z_L^+\cdot x_0$ (resp.~$(w\cdot Z_L^+)\cdot x_0$).
Let $d$ be the distance on~$X_L$ and $x_0$ the point of~$X_L$ whose stabilizer is~$K_L$.
By~(\ref{prelim mu distance, reel}) and~(\ref{prelim mu distance, ultrametrique}), we may assume that $|\mu_L^{\R}(g)|=d(x_0,g\cdot x_0)$ for all~$g\in L$.
By the ``shadow lemma'' (see \cite{bou}, Lem.~1.6.2, for instance), there is a constant~$D_0>0$ such that the open sets
$$\mathcal{U}_t = \Big\{ [\mathcal{R}],\quad \mathcal{R}(0)=x_0\ \mathrm{and}\ d\big(\mathcal{R}(t),\mathcal{R}^-(t)\big)<D_0\Big\} ,$$
for $t\in [0,+\infty[ $, form a basis of neighborhoods of~$[\mathcal{R}^-]$ in~$\partial X_L$.
Fix $D\geq D_0$.
For all $k\in K_L$ and $z_1,z_2\in Z_L^+$ with $t_1:=\mu_L^{\R}(z_1)\geq D$ and $t_2:=\mu_L^{\R}(z_2)\geq D$, we have
\begin{eqnarray*}
|\mu_L^{\R}(z_1kz_2)| & = & d(x_0,z_1kz_2\cdot x_0)\\
& = & d(z_1^{-1}\cdot x_0,kz_2\cdot x_0)\\
& = & d\big(\mathcal{R}^-(t_1),k\cdot\mathcal{R}^+(t_2)\big)\\
& \leq & d\big(\mathcal{R}^-(t_1),\mathcal{R}^-(D)\big) + d\big(\mathcal{R}^-(D),k\cdot\mathcal{R}^+(D)\big)\\
& & \quad\ +\, d\big(k\cdot\mathcal{R}^+(D),k\cdot\mathcal{R}^+(t_2)\big)\\
& = & t_1 - D + d\big(\mathcal{R}^-(D),k\cdot\mathcal{R}^+(D)\big) + t_2 - D\\
& = & |\mu_L^{\R}(z_1)| + |\mu_L^{\R}(z_2)| - 2D + d\big(\mathcal{R}^-(D),k\cdot\mathcal{R}^+(D)\big).
\end{eqnarray*}
Therefore, if $[k\cdot\mathcal{R}^+]\in\mathcal{U}_D$ then $|\mu_L^{\R}(z_1kz_2)|<|\mu_L^{\R}(z_1)|+|\mu_L^{\R}(z_2)|-D$.
This completes the proof of Lemma~\ref{equivalence transversalite}.
\end{proof}

\smallskip

\begin{proof}[Proof of Proposition~\ref{produit d'elements transverses}]
As in the proof of Lemma~\ref{equivalence transversalite}, let~$X_L$ denote either the Riemannian symmetric space or the Bruhat-Tits tree of~$L$, depending on whether $\kkk$ is Archimedean or not.
Let $d$ be the distance on~$X_L$ and $x_0$ the point of~$X_L$ whose stabilizer is~$K_L$.
By~(\ref{prelim mu distance, reel}) and~(\ref{prelim mu distance, ultrametrique}), we may assume that $|\mu_L^{\R}(g)|=d(g\cdot x_0,x_0)$ for all~$g\in L$.
Let~$X'_L$ denote the convex hull of the limit set of~$\Gamma$ in~$X_L$.
It is a closed subset of~$X_L$ on which $\Gamma$ acts cocompactly: indeed, if $\kkk=\R$ or~$\C$ this is the convex cocompacity assumption; if $\kkk$ is non-Archimedean it follows from~\cite{bas}, Prop.~7.9.
Fix a compact fundamental domain~$\D$ of~$X'_L$ for the action of~$\Gamma$, and fix~$x'_0\in\Int(\D)$.
Let~$\mathrm{d}_{\D}$ be the diameter of~$\D$ and $D_0$ the constant given by Lemma~\ref{equivalence transversalite}.
Let
$$D = \max\big(D_0,6\,\mathrm{d}_{\D}+6\,d(x_0,x'_0)\big) > 0$$
and let $\mathcal{C}_L$ be the corresponding compact subset of~$N_L^-$ given by Lemma~\ref{equivalence transversalite}.
We claim that $D$ and $\mathcal{C}_L$ satisfy the conclusions of Proposition~\ref{produit d'elements transverses}.
Indeed, let~$R\geq D$.
Fix~$\gamma\in\Gamma$ and let~$I$ be the geodesic segment of~$X'_L$ with endpoints $x'_0$ and $\gamma^{-1}\cdot x'_0$.
Let $n\in\N$ such that
$$nR\, \leq\, d(x'_0,\gamma^{-1}\cdot x'_0)\, <\, (n+1)R.$$
For all $1\leq i\leq n$, let $x'_i\in I$ satisfy $d(x'_i,x'_0)=iR$.
We have $x'_i\in\lambda_i\cdot\D$ for some $\lambda_i\in\Gamma$.
Let $\gamma_0=\gamma\lambda_n\in\Gamma$ and $\gamma_i=\lambda_{n-i+1}^{-1}\lambda_{n-i}\in\Gamma$ for $i\geq 1$ (where $\lambda_0=1$), so that $\gamma=\gamma_0\ldots\gamma_n$.
For all $1\leq i\leq n$,
\begin{eqnarray*}
\big||\mu_L^{\R}(\gamma_i)| - d(x'_{n-i},x'_{n-i+1})\big| & = & \big|d(\lambda_{n-i}\cdot x_0,\lambda_{n-i+1}\cdot x_0) - d(x'_{n-i},x'_{n-i+1})\big|\\
& \leq & d(\lambda_{n-i}\cdot x_0,\lambda_{n-i}\cdot x'_0) + d(\lambda_{n-i}\cdot x'_0,x'_{n-i})\\
& & +\, d(x'_{n-i+1},\lambda_{n-i+1}\cdot x'_0) + d(\lambda_{n-i+1}\cdot x'_0,\lambda_{n-i+1}\cdot x_0)\\
& \leq & 2\,\mathrm{d}_{\D} + 2\,d(x_0,x'_0).
\end{eqnarray*}
Since $d(x'_{n-i},x'_{n-i+1})=R$, we have $\big||\mu_L^{\R}(\gamma_i)|-R\big| \leq 2\,\mathrm{d}_{\D} + 2\,d(x_0,x'_0)$.
Similarly,
$$\big||\mu_L^{\R}(\gamma_0)| - d(x'_n,\gamma^{-1}\cdot x'_0)\big| \leq 2\,\mathrm{d}_{\D} + 2\,d(x_0,x'_0),$$
hence $|\mu_L^{\R}(\gamma_0)|\leq R+2\,\mathrm{d}_{\D}+2\,d(x_0,x'_0)$.
For $1\leq i\leq n-1$, the same reasoning shows that
\begin{eqnarray}\label{comparaison entre mu(gamma_i gamma_i+1) et mu(gamma_i) + mu(gamma_i+1)}
|\mu_L^{\R}(\gamma_i\gamma_{i+1})| & \geq & d(x_{n-i-1},x_{n-i+1}) - 2\,\mathrm{d}_{\D} - 2\,d(x_0,x'_0)\nonumber\\
& = & 2R - 2\,\mathrm{d}_{\D} - 2\,d(x_0,x'_0)\nonumber\\
& \geq & |\mu_L^{\R}(\gamma_i)| + |\mu_L^{\R}(\gamma_{i+1})| - 6\,\mathrm{d}_{\D} - 6\,d(x_0,x'_0)\nonumber\\
& \geq & |\mu_L^{\R}(\gamma_i)| + |\mu_L^{\R}(\gamma_{i+1})| - D.
\end{eqnarray}
By Lemma~\ref{equivalence transversalite}, we have $\ell_{\gamma_i}k_{\gamma_{i+1}}\in \mathcal{C}_L P_L$ for all $1\leq i\leq n-1$.

We claim that $\mu_L^{\R}(\gamma_1),\ldots,\mu_L^{\R}(\gamma_n)\in\R$ all have the same sign.
Indeed, we may assume that $\mathbf{L}$ has semisimple $\kkk$-rank~$0$, in which case $\mu_L^{\R} : L\rightarrow\R$ is a group homomorphism.
If $\mu_L^{\R}(\gamma_i)$ and~$\mu_L^{\R}(\gamma_{i+1})$ had different signs for some $1\leq i\leq\nolinebreak n-1$, then (\ref{comparaison entre mu(gamma_i gamma_i+1) et mu(gamma_i) + mu(gamma_i+1)}) would imply that $$\min\big(|\mu_L^{\R}(\gamma_i)|,|\mu_L^{\R}(\gamma_{i+1})|\big) \leq \frac{D}{2},$$
which would contradict the fact that
\begin{eqnarray*}
|\mu_L^{\R}(\gamma_i)|,|\mu_L^{\R}(\gamma_{i+1})| & \geq & R-2\,\mathrm{d}_{\D}-2\,d(x_0,x'_0)\\
& \geq & D-2\,\mathrm{d}_{\D}-2\,d(x_0,x'_0)\ >\ \frac{D}{2}.\qedhere
\end{eqnarray*}
\end{proof}

\subsection{Transversality in $G$}\label{Transversalite en rang superieur}

Let $\kkk$ be a local field, $\mathbf{G}$ a connected reductive algebraic $\kkk$-group, and $\mathbf{L}$ a closed connected reductive subgroup of~$\mathbf{G}$ of $\kkk$-rank~$1$.
Fix a Cartan decomposition $G=KA^+K$ or $G=KZ^+K$.

\begin{rema}\label{decompositions de Cartan compatibles}
After conjugating~$\mathbf{L}$ by some element of~$G$, we may assume that $L$ admits a Cartan decomposition $L=K_LA_L^+K_L$ or $L=K_LZ_L^+K_L$ with $K_L\subset K$, with $\mathbf{A_L}\subset\mathbf{A}$, and with $A_L^+\cap A^+$ noncompact.
\end{rema}

\noindent
Indeed, $\mathbf{A_L}$ is contained in some maximal $\kkk$-split $\kkk$-torus of~$\mathbf{G}$, and these tori are all conjugate over~$\kkk$ (\cite{bot}, Th.~4.21).
Thus, after conjugating~$\mathbf{L}$ by some element of~$G$, we may assume that $\mathbf{A_L}\subset\mathbf{A}$.
We now use a result proved by Mostow~\cite{mos55} and Karpelevich~\cite{kar} in the Archimedean case, and by Landvogt~\cite{lan} in the non-Archimedean case: after conjugating~$\mathbf{L}$ again by some element of~$G$, we may assume that $K_L\subset K$.
Finally, after conjugating~$\mathbf{L}$ by some element of the Weyl group~$W$, we may assume that $A_L^+\cap A^+$ is noncompact.

\medskip

Assume that the conditions of Remark~\ref{decompositions de Cartan compatibles} are satisfied.
The following lemma provides a link between Propositions~\ref{mu d'un produit transverse selon alpha} and~\ref{produit d'elements transverses}.
We use the notation of Subsection~\ref{Sous-groupes paraboliques maximaux}.

\begin{lem}\label{P_L and P_alpha}
If the restriction of $\alpha\in\Delta$ to~$\mathbf{A_L}$ is nontrivial, then $P_L\subset P_{\alpha}$ and $N_L^- \subset N_{\alpha}^- P_{\alpha}$.
\end{lem}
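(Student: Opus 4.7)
\emph{Plan.} Both inclusions will follow from a weight-space analysis of $\mathfrak{l}\subset\g$, made possible by the compatibility of Cartan decompositions in Remark~\ref{decompositions de Cartan compatibles}. The key arithmetic input is that the noncompactness of $A_L^+\cap A^+$ forces every simple root $\beta\in\Delta$ to restrict to~$\mathbf{A_L}$ as a nonnegative multiple $m_\beta\alpha_L$ of~$\alpha_L$, with $m_\beta=0$ if and only if $\beta|_{\mathbf{A_L}}$ is trivial: indeed, picking $a\in A_L^+\cap A^+$ with $|\alpha_L(a)|$ arbitrarily large, the requirement $|\beta(a)|\geq 1$ coming from $a\in A^+$ forces $m_\beta\geq 0$. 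Since every root $\beta=\sum_\gamma n_\gamma\gamma\in\Phi$ has all coefficients $n_\gamma$ of the same sign, this yields the following dichotomy: a root $\beta$ with $\beta|_{\mathbf{A_L}}>0$ (resp.\ $<0$) lies in~$\Phi^+$ (resp.~$\Phi^-$), and a root $\beta$ with $\beta|_{\mathbf{A_L}}=0$ satisfies $n_\gamma=0$ for every $\gamma\in\Delta$ with $m_\gamma>0$. In particular, under the hypothesis $\alpha|_{\mathbf{A_L}}\neq 0$ (i.e.\ $m_\alpha>0$), such a $\beta$ satisfies $c_\alpha(\beta)=0$, so $\g_\beta\subset\p_\alpha$ by the explicit description of~$\p_\alpha$ in Subsection~\ref{Sous-groupes paraboliques maximaux}.

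For $P_L\subset P_\alpha$ I would decompose $\p_L=\z_L\oplus\uu^L_{\alpha_L}$ (together with $\uu^L_{2\alpha_L}$ when $2\alpha_L$ is a root of~$\mathbf{L}$) into $\mathbf{A}$-weight spaces of~$\g$: the summand $\z_L\subset\z\oplus\bigoplus_{\beta|_{\mathbf{A_L}}=0}\g_\beta$ lies in~$\p_\alpha$ by the previous paragraph, while every positive $\mathbf{A_L}$-weight space $\uu^L_{k\alpha_L}$ ($k>0$) lies in $\bigoplus_{\beta\in\Phi^+}\g_\beta\subset\p_\alpha$ by the dichotomy. Translating to groups, $\mathbf{Z_L}$ sits inside the centralizer $Z_{\mathbf{G}}(\mathbf{A_L})$---the Levi subgroup of~$\mathbf{G}$ generated by~$\mathbf{Z}$ and the root subgroups $\mathbf{U}_\beta$ with $\beta|_{\mathbf{A_L}}=0$, all contained in $\mathbf{P}_\alpha$---while $\mathbf{U}^L_{\alpha_L}$ is contained in the full positive unipotent subgroup $\mathbf{U}^+\subset\mathbf{P}_\alpha$. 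Hence $\mathbf{P_L}\subset\mathbf{P}_\alpha$, and $P_L\subset P_\alpha$ after taking $\kkk$-points.

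For $N_L^-\subset N_\alpha^- P_\alpha$, the dichotomy gives $\mathrm{Lie}(\mathbf{N_L^-})=\uu^L_{-\alpha_L}\subset\bigoplus_{\beta\in\Phi^-}\g_\beta$, so $\mathbf{N_L^-}$ is contained in the full opposite unipotent subgroup $\mathbf{U}^-$ of~$\mathbf{G}$. Splitting $\Phi^-$ into $\{\beta:c_\alpha(\beta)<0\}$---indexing the root subgroups of~$\mathbf{N}_\alpha^-$---and $\{\beta:c_\alpha(\beta)=0\}$---indexing the opposite unipotent radical of the Levi factor $\mathbf{L}_\alpha\subset\mathbf{P}_\alpha$---yields the product decomposition $\mathbf{U}^-=\mathbf{N}_\alpha^-\cdot(\mathbf{U}^-\cap\mathbf{L}_\alpha)\subset\mathbf{N}_\alpha^-\mathbf{P}_\alpha$, whence $N_L^-\subset N_\alpha^- P_\alpha$. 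The whole argument is essentially weight bookkeeping plus the Levi decomposition of~$\mathbf{P}_\alpha$, so no serious obstacle is expected; the degenerate case where $\mathbf{L}$ has semisimple $\kkk$-rank~$0$ is handled identically, with $\mathbf{N_L^-}=\{1\}$ making the second inclusion trivial and $\mathbf{P_L}=\mathbf{L}=\mathbf{Z_L}\subset Z_{\mathbf{G}}(\mathbf{A_L})\subset\mathbf{P}_\alpha$ giving the first.
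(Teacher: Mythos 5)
Your proof is correct and takes essentially the same route as the paper: both compare the $\mathbf{A_L}$-grading of~$\g$ with the $\mathbf{A}$-grading, and deduce the Lie-algebra inclusions $\p_L\subset\p_{\alpha}$ and $\n_L^-\subset\n_{\alpha}^-\oplus\n_{\alpha^c}^-$ before passing to groups. The paper packages the root-coefficient bookkeeping more compactly by choosing a single $a\in A_L^+\cap A^+$ with $|\alpha(a)|>1$ and reading the inclusions off the absolute values of the $\Ad(a)$-eigenvalues, whereas you spell out the coefficients $m_\beta$ and $n_\gamma$ directly -- but this is the same information, so the argument is substantively identical.
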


\begin{proof}
Fix~$\alpha\in\Delta$ whose restriction to~$\mathbf{A_L}$ is nontrivial, and let $a\in A_L^+\cap A^+$ such that $|\alpha(a)|>1$.
Note that $\g=\n_{\alpha}^-\oplus\p_{\alpha}$ and $\p_{\alpha}=\p_{\emptyset}\oplus\n_{\alpha^c}^-$, where
\begin{eqnarray*}
\n_{\alpha}^- & = & \bigoplus_{\beta\in\Phi^+} \uu_{-(\alpha+\beta)},\\
\p_{\emptyset} & = & \z \oplus \bigoplus_{\beta\in\Phi^+} \uu_{\beta},\\
\mathrm{and}\quad\quad \n_{\alpha^c}^- & = & \bigoplus_{\beta\in\N(\Delta\smallsetminus\{ \alpha\} )} \uu_{-\beta}
\end{eqnarray*}
are all direct sums of eigenspaces of~$\Ad(a)$, with eigenvalues of absolute value~$<1$ on~$\n_{\alpha}^-$ and~$\geq 1$ on~$\p_{\emptyset}$.
Since $\p_L$ is a sum of eigenspaces of~$\Ad(a)$ for eigenvalues of absolute value~$\geq 1$, we have $\p_L\subset\p_{\alpha}$.
Given that $\mathbf{P_L}$ and~$\mathbf{P}_{\alpha}$ are connected, this implies that~$P_L\subset P_{\alpha}$.
Since $\n_L^-$ is a sum of eigenspaces of~$\Ad(a)$ for eigenvalues of absolute value~$<1$, we have $\n_L^-\subset\n_{\alpha}^-\oplus\n_{\alpha^c}^-$.
Note that $[\n_{\alpha}^-,\n_{\alpha^c}^-] \subset \n_{\alpha}^-$, hence $N_{\alpha}^-$ is normalized by the group~$N_{\alpha^c}^-$ generated by the groups~$U_{-\beta}$ for $\beta\in\N(\Delta\smallsetminus\{ \alpha\} )$.
This implies that
$$N_L^- \,\subset\, N_{\alpha}^- N_{\alpha^c}^- \,\subset\, N_{\alpha}^- P_{\alpha}.\qedhere$$
\end{proof}

\section{Cartan projection and deformation}\label{Demonstration des theoremes}

In this section we prove Theorem~\ref{varphi ne change pas beaucoup mu} using Propositions~\ref{mu d'un produit transverse selon alpha} and~\ref{produit d'elements transverses}.
By the triangular inequality, it is sufficient to prove the following proposition.

\begin{prop}\label{mu d'un produit et somme des mu}
Let $\kkk$ be a local field, $G$ the set of $\kkk$-points of a connected reductive algebraic $\kkk$-group~$\mathbf{G}$, and $L$ the set of $\kkk$-points of a closed reductive subgroup~$\mathbf{L}$ of~$\mathbf{G}$ of $\kkk$-rank~$1$.
Fix a Cartan projection $\mu : G\rightarrow E^+$ and a norm~$\Vert\cdot\Vert$ on~$E$.
If $\kkk=\R$ or~$\C$, let $\Gamma$ be a convex cocompact subgroup of~$L$; if $\kkk$ is non-Archimedean, let $\Gamma$ be any finitely generated discrete subgroup of~$L$.
Then for any~$\varepsilon>0$, there exist a finite subset~$F_{\varepsilon}$ of~$\Gamma$, a neighborhood $\mathcal{U}_{\varepsilon}\subset\Hom(\Gamma,G)$ of the natural inclusion, and a constant $C_{\varepsilon}\geq 0$ such that any~$\gamma\in\Gamma$ may be written as $\gamma=\gamma_0\ldots\gamma_n$ for some $\gamma_0,\ldots,\gamma_n\in F_{\varepsilon}$ with
\begin{enumerate}
	\item $n \leq \varepsilon\,\Vert\mu(\gamma)\Vert + C_{\varepsilon}$,
	\item $\Vert\mu(\varphi(\gamma_i))-\mu(\gamma_i)\Vert\leq 1$ for all $\varphi\in\mathcal{U}_{\varepsilon}$ and $0\leq i\leq n$,
  \item for all $\varphi\in\mathcal{U}_{\varepsilon}$,
  $$\Big\Vert\mu(\varphi(\gamma)) - \sum_{i=0}^n \mu(\varphi(\gamma_i))\Big\Vert \leq \varepsilon \Vert\mu(\gamma)\Vert + C_{\varepsilon}.$$
\end{enumerate}
\end{prop}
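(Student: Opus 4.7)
The plan is to combine Guichard's decomposition (Proposition~\ref{produit d'elements transverses}) with the dynamical control given by Proposition~\ref{mu d'un produit transverse selon alpha}. Given $\varepsilon>0$, I first choose a parameter $R\geq D$ (to be determined from $\varepsilon$) and apply Proposition~\ref{produit d'elements transverses} to express each $\gamma\in\Gamma$ as a transverse product $\gamma=\gamma_0\ldots\gamma_n$ with $|\mu_L^{\R}(\gamma_i)|\in[R-D,R+D]$ for $i\geq 1$, $|\mu_L^{\R}(\gamma_0)|\leq R+D$, common sign on the $\mu_L^{\R}(\gamma_i)$, and $\ell_{\gamma_i}k_{\gamma_{i+1}}\in\mathcal{C}_LP_L$. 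All factors then lie in the compact subset $\{\ell\in L:|\mu_L^{\R}(\ell)|\leq R+D\}$ of $L$; since $\Gamma$ is discrete in $L$, its intersection with this set is a finite subset $F_\varepsilon\subset\Gamma$ containing every factor appearing for every $\gamma$. Since $\mu(L)$ lies in a one-dimensional subspace of $E$, the norms $\Vert\mu(\ell)\Vert$ and $|\mu_L^{\R}(\ell)|$ are linearly comparable for $\ell\in L$, which gives $n\leq C\Vert\mu(\gamma)\Vert/R+O(1)$ and hence (1) upon taking $R\geq C/\varepsilon$. Condition (2) then follows by continuity of $\mu$ on the finite set $F_\varepsilon$, shrinking $\mathcal{U}_\varepsilon$ as needed.

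The heart of the argument is (3). After the compatibility adjustment of Remark~\ref{decompositions de Cartan compatibles}, I fix a generator $v_L\in E_L^+\cap E^+$. For each simple root $\alpha\in\Delta$ with nontrivial restriction to $\mathbf{A_L}$, Lemma~\ref{P_L and P_alpha} gives $N_L^-\subset N_\alpha^-P_\alpha$, so one can choose a compact $\mathcal{C}_\alpha\subset N_\alpha^-$ with $\mathcal{C}_LP_L\subset\mathcal{C}_\alpha P_\alpha$; the transversality hypothesis of Proposition~\ref{mu d'un produit transverse selon alpha} is then satisfied by the $\gamma_i$. Moreover, since $v_L\in E^+$ lies on a one-dimensional subspace of $E_L$ on which $\alpha$ does not vanish, we have $\alpha(v_L)>0$, and thus $\langle\alpha,\mu(\gamma_i)\rangle=\alpha(v_L)\,|\mu_L^{\R}(\gamma_i)|\geq\alpha(v_L)(R-D)$ exceeds the threshold $R_\alpha$ for $R$ large. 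Both conditions persist for $\varphi(\gamma_i)$ with $\varphi$ close to the inclusion on $F_\varepsilon$, by continuity. Proposition~\ref{mu d'un produit transverse selon alpha} applied to $\varphi(\gamma_0),\ldots,\varphi(\gamma_n)$ then yields
\[
\bigl|\bigl\langle\chi_\alpha,\,\mu(\varphi(\gamma))-{\textstyle\sum_i}\mu(\varphi(\gamma_i))\bigr\rangle\bigr|\leq nr_\alpha,
\]
which, by the choice of $R$, is bounded by $\varepsilon\Vert\mu(\gamma)\Vert+O(1)$ up to an innocuous constant factor.

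To conclude (3) I must assemble these directional estimates into a norm bound. The characters of $\mathbf{A}$ that vanish on $\mathbf{D(G)}$ extend to group homomorphisms $G\to\R$ through $\mu$, so in those central directions the error $\mu(\varphi(\gamma))-\sum_i\mu(\varphi(\gamma_i))$ is exactly zero. Combined with the estimates from the preceding paragraph for all $\alpha\in\Delta$ restricting nontrivially to $\mathbf{A_L}$, and using that the $\chi_\alpha$ together with the central characters span $E^*$, the norm estimate follows in the case where every simple root has nontrivial restriction. The main obstacle, and what I expect the paper to address with care, is the remaining case in which some $\alpha\in\Delta$ vanishes on $\mathbf{A_L}$: Proposition~\ref{mu d'un produit transverse selon alpha} is then unavailable for this $\alpha$ because $\langle\alpha,\mu(\gamma_i)\rangle=0$. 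I would try to treat those directions separately using that $\varphi(\gamma_i)$ is close to $L$ together with (\ref{inegalite fine pour mu}), regrouping the factors into longer blocks so that the multiplicative error accumulates only $O(n)$ times rather than $O(n)$ times the size of each factor, so as to reach the same $\varepsilon\Vert\mu(\gamma)\Vert+C_\varepsilon$ bound in the remaining directions.
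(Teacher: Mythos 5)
You have the right opening moves: Guichard's decomposition (Proposition~\ref{produit d'elements transverses}), the pass from $\mathcal{C}_LP_L$ to $\mathcal{C}_\alpha P_\alpha$ via Lemma~\ref{P_L and P_alpha}, and Proposition~\ref{mu d'un produit transverse selon alpha} giving control of $\langle\chi_\alpha,\mu(\varphi(\gamma))-\sum_i\mu(\varphi(\gamma_i))\rangle$ for every $\alpha$ that restricts nontrivially to $\mathbf{A_L}$. You also correctly identify the real difficulty: these estimates control only the projection of the error onto the subspace $E_{\Delta_L}$ spanned by $\{\check\alpha:\alpha\in\Delta_L\}$, and for $\alpha\in\Delta\smallsetminus\Delta_L$ (which can be nonempty, e.g.\ $\mathbf{L}=\mathbf{SL}_2$ embedded in a corner of $\mathbf{SL}_n$) Proposition~\ref{mu d'un produit transverse selon alpha} is unavailable because the contraction hypothesis $\langle\alpha,\mu(\gamma_i)\rangle\geq R_\alpha$ fails.

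Your proposed fix for these remaining directions does not work and is not what the paper does. Splitting into longer blocks and iterating $(\ref{inegalite fine pour mu})$ still produces an error of order $nR\sim\Vert\mu(\gamma)\Vert$ in each such direction: you have $n$ factors each of size $\approx R$, and no matter how you group them, the raw triangle inequality gives $O(nR)$ with no way to squeeze in a factor $\varepsilon$. The paper never estimates the components of the error orthogonal to $E_{\Delta_L}$ directly. Instead it records \emph{two extra facts} and then invokes a purely Euclidean observation (Lemma~\ref{remarque formelle}): first, the vector $x=\sum_i\mu(\gamma_i)$ lies in $E_{\Delta_L}$ and, because the $\mu_L^{\R}(\gamma_i)$ share a sign, $\Vert x\Vert=\sum_i\Vert\mu(\gamma_i)\Vert$ up to a bounded correction for $\gamma_0$; second, subadditivity of $\Vert\mu\Vert$ gives the one-sided bound $\Vert\mu(\varphi(\gamma))\Vert\leq\sum_i\Vert\mu(\varphi(\gamma_i))\Vert\leq(1+\delta)\Vert x\Vert+O(1)$. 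Lemma~\ref{remarque formelle} then says: if $x\in E_1$, if the orthogonal projection of $x'$ onto $E_1$ is within $2\delta\Vert x\Vert$ of $x$, and if $\Vert x'\Vert\leq(1+\delta)\Vert x\Vert$ (all up to additive constants), then $\Vert x'-x\Vert$ is $o(1)\Vert x\Vert$ as $\delta\to 0$. The point is Pythagoras: $\Vert x'\Vert^2=\Vert\pr_{E_1}x'\Vert^2+\Vert x'-\pr_{E_1}x'\Vert^2$, so a near-equality between $\Vert x'\Vert$ and $\Vert\pr_{E_1}x'\Vert\approx\Vert x\Vert$ forces the out-of-$E_1$ component to be small. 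This is the missing idea, and without it your argument has a genuine gap in the case $\Delta_L\subsetneq\Delta$.

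Two smaller issues: Proposition~\ref{mu d'un produit transverse selon alpha} should be applied to $\varphi(\gamma_1),\ldots,\varphi(\gamma_n)$ only, with $\gamma_0$ handled separately by $(\ref{inegalite fine pour mu})$, since the transversality condition of Proposition~\ref{produit d'elements transverses} is only asserted between consecutive factors with index $\geq 1$. And the claim that the $\chi_\alpha$ for $\alpha\in\Delta_L$ together with central characters span $E^*$ is precisely what fails when $\Delta_L\subsetneq\Delta$; Lemma~\ref{remarque formelle} is the device that lets one avoid needing such a spanning set.
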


The proof of Proposition~\ref{mu d'un produit et somme des mu} will be given in Subsection~\ref{Demonstration de la proposition sur mu}.

\subsection{Norms on~$E$ and its subspaces}

Under the assumptions of Proposition~\ref{mu d'un produit et somme des mu}, let $G=KA^+K$ or $G=KZ^+K$ be the Cartan decomposition corresponding to~$\mu$.
By~(\ref{inegalite fine pour mu}), in order to prove Proposition~\ref{mu d'un produit et somme des mu}, we may assume that~$\mathbf{L}$ is connected and replace it by any conjugate by~$G$.
By Remark~\ref{decompositions de Cartan compatibles}, we may assume that $L$ admits a Cartan decomposition $L=K_LA_L^+K_L$ or $L=K_LZ_L^+K_L$ with $K_L\subset K$, with $\mathbf{A_L}\subset\mathbf{A}$, and with $A_L^+\cap A^+$ noncompact.
Let $\mu_L : L\rightarrow E_L^+$ be the corresponding Cartan projection.
We naturally see~$E_L$ as a line in~$E$.
If $\mathbf{L}$ has semisimple $\kkk$-rank~$1$, then $E_L^+$ is a half-line in~$E^+$; if $\mathbf{L}$ has semisimple $\kkk$-rank~$0$, then $E_L^+=E_L$ is a line in~$E$, intersecting~$E^+$ in a half-line, and
$$\mu(g) = E^+\cap W\cdot\mu_L(g)$$
for all~$g\in L$.
Since all norms on~$E$ are equivalent, we may assume that $\Vert\cdot\Vert$ is the $W$-invariant Euclidean norm introduced in Section~\ref{Preliminaires}.
By composing~$\mu_L$ with some isomorphism from~$E_L$ to~$\R$, we get a Cartan projection $\mu_L^{\R} : L\rightarrow\R$ with
$$|\mu_L^{\R}(g)|=\Vert\mu(g)\Vert$$
for all~$g\in L$.
For every~$\alpha\in\Delta$ there are constants $t_{\alpha}^+,t_{\alpha}^-\geq 0$ such that
\begin{equation}\label{definition t_alpha}
  \langle\alpha,\mu(g)\rangle =
     \begin{cases}
        t_{\alpha}^+\,|\mu_L^{\R}(g)| & \text{if $\mu_L^{\R}(g)\geq 0$,}\\
        t_{\alpha}^-\,|\mu_L^{\R}(g)| & \text{if $\mu_L^{\R}(g)\leq 0$.}
     \end{cases}
\end{equation}
Let $\Delta_L = \{ \alpha\in\Delta,\ t_{\alpha}^{\pm}>0\} $ denote the set of simple roots of~$\mathbf{A}$ in~$\mathbf{G}$ whose restriction to~$\mathbf{A_L}$ is nontrivial.
Let $E_{\Delta_L}$ denote the subspace of~$E$ spanned by the coroots~$\check{\alpha}$ for $\alpha\in\Delta_L$, and let $\pr_{E_{\Delta_L}} : E\rightarrow E_{\Delta_L}$ denote the orthogonal projection on~$E_{\Delta_L}$.
Then
$$|v|_{E_{\Delta_L}} = \Vert\pr_{E_{\Delta_L}}(v)\Vert$$
defines a seminorm~$|\cdot|_{E_{\Delta_L}}$ on~$E$.
For $\alpha\in\Delta$, let~$\chi_{\alpha}$ denote the highest weight of the representation~$(\rho_{\alpha},V_{\alpha})$ of~$\mathbf{G}$ introduced in Subsection~\ref{Representations de G}.
Recall that $\langle\chi_{\alpha},\check{\alpha}\rangle\neq 0$ and $\langle\chi_{\alpha},\check{\beta}\rangle= 0$ for all $\beta\in\nolinebreak\Delta\smallsetminus\nolinebreak\{ \alpha\} $, hence $\{ \langle\chi_{\alpha},\cdot\rangle,\ \alpha\in\Delta_L\} $ is a basis of the dual of~$E_{\Delta_L}$.
Thus the function
$$v \longmapsto \sum_{\alpha\in\Delta_L} |\langle\chi_{\alpha},v\rangle|$$
is a norm on~$E_{\Delta_L}$.
Since all norms on~$E_{\Delta_L}$ are equivalent, there exists~$c\geq 1$ such that
\begin{equation}\label{normes equivalentes}
c^{-1}\cdot\sum_{\alpha\in\Delta_L} |\langle\chi_{\alpha},v\rangle|\ \leq\ |v|_{E_{\Delta_L}}\ \leq\ c\cdot\sum_{\alpha\in\Delta_L} |\langle\chi_{\alpha},v\rangle|
\end{equation}
for all~$v\in E$.

\subsection{Norm of the projection on~$E_{\Delta_L}$}

The main step in the proof of Proposition~\ref{mu d'un produit et somme des mu} consists of the following proposition, which gives an upper bound for the seminorm~$|\cdot|_{E_{\Delta_L}}$.

\begin{prop}\label{lemme majoration E_Delta_L}
Under the assumptions of Proposition~\ref{mu d'un produit et somme des mu}, for any~$\delta>0$ there exist a finite subset~$F'_{\delta}$ of~$\Gamma$, a neighborhood $\mathcal{U}'_{\delta}\subset\Hom(\Gamma,G)$ of the natural inclusion, and a constant~$C'_{\delta}\geq 0$ such that any~$\gamma\in\Gamma$ may be written as $\gamma=\gamma_0\ldots\gamma_n$ for some $\gamma_0,\ldots,\gamma_n\in F'_{\delta}$ with
\begin{enumerate}
  \item $n\leq\delta\sum_{i=0}^n \Vert\mu(\gamma_i)\Vert$,
	\item $\sum_{i=1}^n \Vert\mu(\gamma_i)\Vert = \Vert\sum_{i=1}^n \mu(\gamma_i)\Vert$,
	\item $\Vert\mu(\varphi(\gamma_i))-\mu(\gamma_i)\Vert\leq 1$ for all~$\varphi\in\mathcal{U}'_{\delta}$ and $0\leq i\leq n$,
	\item for all~$\varphi\in\mathcal{U}'_{\delta}$,
$$\Big|\mu(\varphi(\gamma)) - \sum_{i=0}^n \mu(\varphi(\gamma_i))\Big|_{E_{\Delta_L}} \leq \delta\,\Big(\sum_{i=0}^n \Vert\mu(\gamma_i)\Vert\Big) + C'_{\delta}.$$
\end{enumerate}
\end{prop}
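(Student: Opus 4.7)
The plan is to apply Guichard's decomposition (Proposition~\ref{produit d'elements transverses}) at a scale $R$ to be chosen large in terms of $\delta$, and then to bound the seminorm $|\cdot|_{E_{\Delta_L}}$ component-by-component via the norm equivalence~(\ref{normes equivalentes}), by applying Proposition~\ref{mu d'un produit transverse selon alpha} to each simple root $\alpha\in\Delta_L$.

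First I would fix $R\geq D$ (to be determined later) and apply Proposition~\ref{produit d'elements transverses} to write $\gamma=\gamma_0\gamma_1\ldots\gamma_n$ with $|\mu_L^{\R}(\gamma_i)|\in [R-D,R+D]$ for $1\leq i\leq n$, with $|\mu_L^{\R}(\gamma_0)|\leq R+D$, with all $\mu_L^{\R}(\gamma_i)$ ($i\geq 1$) of a common sign, and with $\ell_{\gamma_i}k_{\gamma_{i+1}}\in\mathcal{C}_L P_L$ for $1\leq i\leq n-1$. The common-sign condition forces the vectors $\mu(\gamma_i)\in E^+$ for $i\geq 1$ to lie on a single ray, which yields~(2). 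The set $F'_\delta:=\{\gamma\in\Gamma:\ |\mu_L^{\R}(\gamma)|\leq R+D\}$ is finite by discreteness of $\Gamma$ together with properness of $\mu_L$, and contains all factors $\gamma_i$. Condition~(3) follows from the continuity of $\varphi\mapsto\mu(\varphi(\gamma'))$ at each $\gamma'\in F'_\delta$, so long as $\mathcal{U}'_\delta$ is taken sufficiently small. Condition~(1) follows from the lower bound $\Vert\mu(\gamma_i)\Vert\geq R-D$ for $i\geq 1$, which gives $n\leq \sum_{i=0}^n\Vert\mu(\gamma_i)\Vert/(R-D)$; so choosing $R$ large in terms of $1/\delta$ suffices.

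The core of the argument is~(4). By~(\ref{normes equivalentes}) it suffices to bound $|\langle\chi_\alpha,\mu(\varphi(\gamma))-\sum_{i=0}^n\mu(\varphi(\gamma_i))\rangle|$ for each $\alpha\in\Delta_L$. I would first split off $\varphi(\gamma_0)$: since $\gamma_0\in F'_\delta$ is in a finite set and $\mathcal{U}'_\delta$ is controlled by~(3), the quantity $\Vert\mu(\varphi(\gamma_0))\Vert$ is uniformly bounded, and~(\ref{inegalite fine pour mu}) then bounds $\Vert\mu(\varphi(\gamma))-\mu(\varphi(\gamma_1)\ldots\varphi(\gamma_n))\Vert$ by an absolute constant that can be absorbed into $C'_\delta$. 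It remains to apply Proposition~\ref{mu d'un produit transverse selon alpha} to the $n$ elements $\varphi(\gamma_1),\ldots,\varphi(\gamma_n)$. The contractivity hypothesis $\langle\alpha,\mu(\varphi(\gamma_i))\rangle\geq R_\alpha$ follows from~(\ref{definition t_alpha}): $\langle\alpha,\mu(\gamma_i)\rangle=t_\alpha^{\pm}|\mu_L^{\R}(\gamma_i)|\geq t_\alpha^{\pm}(R-D)$ with $t_\alpha^{\pm}>0$ because $\alpha\in\Delta_L$, and~(3) transports this bound to $\varphi(\gamma_i)$ for $R$ large. The transversality hypothesis is where Lemma~\ref{P_L and P_alpha} enters: the inclusion $\mathcal{C}_L P_L\subset N_L^- P_L\subset N_\alpha^- P_\alpha$ lets us choose $\mathcal{C}_\alpha\subset N_\alpha^-$ to be (a compact neighborhood of) the projection of $\mathcal{C}_L P_L$ in the direction $N_\alpha^-$. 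Proposition~\ref{mu d'un produit transverse selon alpha} then yields $|\langle\chi_\alpha,\mu(\varphi(\gamma_1)\ldots\varphi(\gamma_n))-\sum_{i=1}^n\mu(\varphi(\gamma_i))\rangle|\leq n r_\alpha$. Summing over $\alpha\in\Delta_L$ via~(\ref{normes equivalentes}) gives a bound of the form $Mn+C$ on the seminorm, and combining with a sharpened version of~(1) (obtained by enlarging $R$ so that $1/(R-D)\leq\delta/M$) yields~(4).

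The main obstacle I anticipate is the transversality step for the \emph{deformed} elements: the Cartan decomposition $g=k_g z_g \ell_g$ is not canonical, and the $K$-factors are only determined modulo the centralizer of $z_g$ in $K$, so one cannot na\"\i vely speak of continuity of $g\mapsto(k_g,\ell_g)$. The saving point is that once $\langle\alpha,\mu(g)\rangle$ is sufficiently large, the coset $k_g P_\alpha\in G/P_\alpha$ is in fact well-defined (it coincides with the attracting fixed flag of the proximal element $\rho_\alpha(g)$ coming from Lemma~\ref{remarques V_alpha}) and depends continuously on $g$; dually for $\ell_g^{-1}$. Applied uniformly over the finite set $F'_\delta$ and for $\varphi$ close enough to the natural inclusion, this persistence of proximal dynamics guarantees $\ell_{\varphi(\gamma_i)}k_{\varphi(\gamma_{i+1})}\in\mathcal{C}_\alpha P_\alpha$ after enlarging $\mathcal{C}_\alpha$ slightly, completing the verification.
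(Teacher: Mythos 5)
Your proposal is correct and follows essentially the same route as the paper: apply Proposition~\ref{produit d'elements transverses} at a scale $R$ large in terms of $\delta$, transport the transversality condition from $L$ to $G$ via Lemma~\ref{P_L and P_alpha}, and control $\langle\chi_\alpha,\cdot\rangle$ for $\alpha\in\Delta_L$ using Proposition~\ref{mu d'un produit transverse selon alpha}, concluding with the norm equivalence~(\ref{normes equivalentes}). The subtlety you flag about the non-canonicity of $(k_g,\ell_g)$ is genuine and worth raising; the paper handles it implicitly by requiring $\mathcal{C}_L\subset\Int(\mathcal{C}_\alpha)P_\alpha$ (note the interior) and then asserting that the finitely many transversality constraints $\ell_{\varphi(\gamma)}k_{\varphi(\gamma')}\in\mathcal{C}_\alpha P_\alpha$ cut out a neighborhood in $\Hom(\Gamma,G)$, whereas your reformulation via the continuity of the attracting flag $k_g P_\alpha\in G/P_\alpha$ when $\langle\alpha,\mu(g)\rangle$ is large makes the openness more transparent.
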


To prove Proposition~\ref{lemme majoration E_Delta_L}, we use Propositions~\ref{mu d'un produit transverse selon alpha} and~\ref{produit d'elements transverses}, together with Lemma~\ref{P_L and P_alpha}.

\begin{proof}
Let $D>0$ be the constant and $\mathcal{C}_L$ the compact subset of~$N_L^-$ given by Proposition~\ref{produit d'elements transverses}.
By Lemma~\ref{P_L and P_alpha}, for any $\alpha\in\Delta_L$, the set~$\mathcal{C}_L$ is contained in $\Int(\mathcal{C}_{\alpha})P_{\alpha}$ for some compact subset~$\mathcal{C}_{\alpha}$ of~$N_{\alpha}^-$, where $\Int(\mathcal{C}_{\alpha})$ denotes the interior of~$\mathcal{C}_{\alpha}$.
Let $r_{\alpha},R_{\alpha}>0$ be the corresponding constants given by Proposition~\ref{mu d'un produit transverse selon alpha}.
Fix $\delta>0$ and choose $R>D$ large enough so that $\frac{1}{R-D} \leq \delta$ and $\min(t_{\alpha}^+,t_{\alpha}^-)(R-D) - 1\ \geq\ R_{\alpha}$ for all~$\alpha\in\Delta_L$, where $t_{\alpha}^+$ and~$t_{\alpha}^-$ are defined by~(\ref{definition t_alpha}).
Let $F'_{\delta}$ be the set of elements~$\gamma\in\Gamma$ such that $|\mu_L^{\R}(\gamma)|\leq R+D$, and $F''_{\delta}$ the subset of elements $\gamma\in F'_{\delta}$ such that $|\mu_L^{\R}(\gamma)|\geq R-D$.
Note that $F'_{\delta}$ et~$F''_{\delta}$ are finite since $\mu_L^{\R}$ is a proper map and $\Gamma$ is discrete in~$L$.
Let $\mathcal{U}'_{\delta}\subset\Hom(\Gamma,G)$ be the neighborhood of the natural inclusion whose elements~$\varphi$ satisfy the following two conditions:
\begin{itemize}
	\item $\Vert\mu(\varphi(\gamma))-\mu(\gamma)\Vert\leq 1$ and $|\langle\alpha,\mu(\varphi(\gamma))-\mu(\gamma)\rangle|\leq 1$ for all~$\gamma\in F'_{\delta}$ and all~$\alpha\in\Delta_L$,
	\item $\ell_{\varphi(\gamma)}k_{\varphi(\gamma')}\in \mathcal{C}_{\alpha} P_{\alpha}$ for all~$\gamma,\gamma'\in F''_{\delta}$ with $\ell_{\gamma}k_{\gamma'}\in \mathcal{C}_L P_L$ and all~$\alpha\in\nolinebreak\Delta_L$,
\end{itemize}
where for $g\in G$ we write $g = k_g z_g \ell_g$ with $k_g,\ell_g\in K$ and $z_g\in Z^+$.
We claim that $F'_{\delta}$ and $\mathcal{U}'_{\delta}$ satisfy the conclusions of Proposition~\ref{lemme majoration E_Delta_L} for some constant~$C'_{\delta}$.
Indeed, let~$\gamma\in\Gamma$.
By Proposition~\ref{produit d'elements transverses}, we may write $\gamma=\gamma_0\ldots\gamma_n$ for some elements $\gamma_0\in F'_{\delta}$ and $\gamma_1,\ldots,\gamma_n\in F''_{\delta}$ such that
\begin{itemize}
	\item $\ell_{\gamma_i} k_{\gamma_{i+1}} \in \mathcal{C}_L P_L$ for all $1\leq i\leq n-1$,
	\item $\mu_L^{\R}(\gamma_1),\ldots,\mu_L^{\R}(\gamma_n)$ are all~$\geq 0$ or all~$\leq 0$.
\end{itemize}
This last condition implies that $\mu_L^{\R}(\gamma_1),\ldots,\mu_L^{\R}(\gamma_n)$ all belong to the same half-line in~$E^+$, hence
$$\sum_{i=1}^n \Vert\mu(\gamma_i)\Vert = \Big\Vert\sum_{i=1}^n \mu(\gamma_i)\Big\Vert.$$
Moreover, since $\gamma_1,\ldots,\gamma_n\in F''_{\delta}$, we have
$$n \,\leq\, \frac{1}{R-D}\cdot\sum_{i=0}^n |\mu_L^{\R}(\gamma_i)| \,\leq\, \delta\,\sum_{i=0}^n \Vert\mu(\gamma_i)\Vert.$$
Let~$\varphi\in\mathcal{U}'_{\delta}$.
According to~(\ref{normes equivalentes}), in order to prove Condition~(4) it is sufficient to bound
$$\bigg|\Big\langle\chi_{\alpha},\mu(\varphi(\gamma)) - \sum_{i=0}^n \mu(\varphi(\gamma_i))\Big\rangle\bigg|$$
for all~$\alpha\in\Delta_L$.
For~$\alpha\in\Delta_L$ and $1\leq i\leq n$ we have
\begin{eqnarray*}
\langle\alpha,\mu(\varphi(\gamma_i))\rangle & \geq & \langle\alpha,\mu(\gamma_i)\rangle - 1\\
& \geq & \min(t_{\alpha}^+,t_{\alpha}^-)\,|\mu_L^{\R}(\gamma_i)| - 1\\
& \geq & \min(t_{\alpha}^+,t_{\alpha}^-)(R-D) - 1\ \geq\ R_{\alpha}
\end{eqnarray*}
and $\ell_{\varphi(\gamma_i)} k_{\varphi(\gamma_{i+1})} \in \mathcal{C}_{\alpha} P_{\alpha}$.
Proposition~\ref{mu d'un produit transverse selon alpha} thus implies that
\begin{eqnarray*}
\bigg|\Big\langle\chi_{\alpha},\mu(\varphi(\gamma_1\ldots\gamma_n)) - \sum_{i=1}^n \mu(\varphi(\gamma_i))\Big\rangle\bigg| & \leq & nr_{\alpha}\\
& \leq & \frac{r_{\alpha}}{R-D}\,\sum_{i=0}^n \Vert\mu(\gamma_i)\Vert.
\end{eqnarray*}
On the other hand, by (\ref{normes equivalentes}) and~(\ref{inegalite fine pour mu}),
\begin{eqnarray*}
\big|\big\langle\chi_{\alpha},\mu(\varphi(\gamma))-\mu(\varphi(\gamma_1\ldots\gamma_n))\big\rangle\big|
& \leq & c\,\Vert\mu(\varphi(\gamma_0))\Vert\\
& \leq & c \cdot \max_{f\in F'_{\delta}} \big(\Vert\mu(f)\Vert + 1\big).
\end{eqnarray*}
By the triangular inequality, we finally get
$$\bigg|\Big\langle\chi_{\alpha},\mu(\varphi(\gamma)) - \sum_{i=0}^n \mu(\varphi(\gamma_i))\Big\rangle\bigg| \leq \frac{r_{\alpha}}{R-D}\,\Big(\sum_{i=0}^n \Vert\mu(\gamma_i)\Vert\Big) + C'_{\delta}$$
with
$$C'_{\delta} = c \cdot \max_{f\in F'_{\delta}} \big(\Vert\mu(f)\Vert + |\langle\chi_{\alpha},\mu(f)\rangle| + 2\big).$$
By~(\ref{normes equivalentes}), this implies Condition~(4) whenever
$$\sum_{\alpha\in\Delta_L} \frac{c\,r_{\alpha}}{R-D}\ \leq\ \delta,$$
which holds for $R$ large enough.
\end{proof}

\subsection{Proof of Proposition~\ref{mu d'un produit et somme des mu}}\label{Demonstration de la proposition sur mu}

Proposition~\ref{mu d'un produit et somme des mu} follows from Proposition~\ref{lemme majoration E_Delta_L} and from the following general observation.

\begin{lem}\label{remarque formelle}
Let $(E,\Vert\cdot\Vert)$ be a Euclidean space, with corresponding scalar product~$\langle\cdot,\cdot\rangle$, and $E_1$ a subspace of~$E$.
For any~$x\in E$, let $|x|_1=\Vert\pr_1(x)\Vert$, where $\pr_1 : E\rightarrow E_1$ denotes the orthogonal projection on~$E_1$.
For any~$\delta>0$, fix $C''_{\delta}\geq 0$ and let~$\mathcal{I}_{\delta}$ be the set of pairs $(x,x')\in E^2$ such that
\begin{enumerate}
  \item $x\in E_1$,
	\item $|x'-x|_1\leq 2\delta\,\Vert x\Vert+C''_{\delta}$,
	\item $\Vert x'\Vert\leq (1+\delta)\,\Vert x\Vert+C''_{\delta}$.
\end{enumerate}
Then
$$\sup_{(x,x')\in\mathcal{I}_{\delta},\ x\neq 0} \frac{\Vert x'-x\Vert - 4C''_{\delta}/\delta}{\Vert x\Vert} \longrightarrow_{\delta\rightarrow 0} 0.$$
\end{lem}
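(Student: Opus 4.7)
The plan is to exploit the hypothesis $x \in E_1$ via the orthogonal decomposition $E = E_1 \oplus E_1^{\perp}$. Writing $x' = y + z$ with $y = \pr_1(x') \in E_1$ and $z \in E_1^{\perp}$, the difference $x' - x$ splits orthogonally as $(y - x) + z$, so Pythagoras yields
$$\Vert x' - x\Vert^2 = \Vert y - x\Vert^2 + \Vert z\Vert^2 \quad \text{and} \quad \Vert x'\Vert^2 = \Vert y\Vert^2 + \Vert z\Vert^2.$$
In these terms, condition~(2) reads $\Vert y - x\Vert \leq 2\delta \Vert x\Vert + C''_{\delta}$, and condition~(3) reads $\Vert y\Vert^2 + \Vert z\Vert^2 \leq \bigl((1+\delta)\Vert x\Vert + C''_{\delta}\bigr)^2$. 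I would then split into two regimes according to the size of $\Vert x\Vert$ relative to the threshold $C''_{\delta}/\delta$.

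In the \emph{large} regime $\Vert x\Vert \geq C''_{\delta}/\delta$, the bounds sharpen to $\Vert y - x\Vert \leq 3\delta\,\Vert x\Vert$ and $\Vert y\Vert^2 + \Vert z\Vert^2 \leq (1+2\delta)^2 \Vert x\Vert^2$; since $\Vert y\Vert \geq \Vert x\Vert - \Vert y - x\Vert \geq (1 - 3\delta)\Vert x\Vert$, subtracting the two estimates yields
$$\Vert z\Vert^2 \,\leq\, \bigl[(1+2\delta)^2 - (1-3\delta)^2\bigr] \Vert x\Vert^2 \,=\, O(\delta)\,\Vert x\Vert^2,$$
and hence $\Vert x' - x\Vert \leq \Vert y - x\Vert + \Vert z\Vert \leq \bigl(3\delta + O(\sqrt{\delta})\bigr)\Vert x\Vert$. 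This is exactly an $o(1)$ bound on $(\Vert x' - x\Vert - 4C''_{\delta}/\delta)/\Vert x\Vert$ as $\delta \to 0$.

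In the \emph{small} regime $\Vert x\Vert < C''_{\delta}/\delta$, the crude triangle inequality
$$\Vert x' - x\Vert \,\leq\, \Vert x\Vert + \Vert x'\Vert \,\leq\, (2+\delta)\Vert x\Vert + C''_{\delta}$$
combined with $\Vert x\Vert < C''_{\delta}/\delta$ gives $\Vert x' - x\Vert < (2+\delta)C''_{\delta}/\delta + C''_{\delta} \leq 4 C''_{\delta}/\delta$ for $\delta \leq 1$, so the numerator $\Vert x' - x\Vert - 4 C''_{\delta}/\delta$ is non-positive and does not contribute to the supremum. The role of the additive term $4 C''_{\delta}/\delta$ in the statement is precisely to absorb this regime; the constant $4$ is tuned to the threshold $C''_{\delta}/\delta$ used to separate the two cases. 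The only real work is the Pythagorean estimate in the large regime; after that, the two cases combine to give the claimed convergence.
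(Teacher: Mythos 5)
Your proof is correct and follows the same overall structure as the paper's: the same threshold $\Vert x\Vert \lessgtr C''_{\delta}/\delta$, the same crude triangle-inequality bound in the small regime, and the same sharpened estimates $\Vert y-x\Vert \leq 3\delta\Vert x\Vert$, $\Vert x'\Vert \leq (1+2\delta)\Vert x\Vert$ in the large regime. The only genuine difference is how the large regime is closed out. The paper lower-bounds $\langle x, x'\rangle = \langle x, \pr_1(x')\rangle \geq (1-3\delta)\Vert x\Vert^2$ via Cauchy--Schwarz and then appeals to the geometric observation that the set $\{y' : 1-3\delta \leq \langle y,y'\rangle \leq \Vert y'\Vert \leq 1+2\delta\}$ (for unit $y$) has diameter tending uniformly to $0$. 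You instead decompose $x' = y + z$ orthogonally, apply Pythagoras, and subtract the estimates to get an explicit bound $\Vert z\Vert^2 \leq \bigl[(1+2\delta)^2-(1-3\delta)^2\bigr]\Vert x\Vert^2 = O(\delta)\Vert x\Vert^2$, hence $\Vert x'-x\Vert = O(\sqrt{\delta})\Vert x\Vert$. Your route is a bit more hands-on and produces an explicit $O(\sqrt{\delta})$ rate where the paper is content with a qualitative $o(1)$; the underlying geometry is the same.
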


\begin{proof}
Let $\delta\in ]0,1]$ and $(x,x')\in\mathcal{I}_{\delta}$.
If $\Vert x\Vert\leq C''_{\delta}/\delta$, then
$$\Vert x'-x\Vert \leq \Vert x'\Vert + \Vert x\Vert \leq (2+\delta)\,\Vert x\Vert + C''_{\delta} \leq \frac{4C''_{\delta}}{\delta}.$$
If $\Vert x\Vert\geq C''_{\delta}/\delta$, then
$$\Vert x'\Vert \leq (1+2\delta)\,\Vert x\Vert$$
and
$$\langle x,x'\rangle = \langle x,\pr_1(x')\rangle \,\geq\, \Vert x\Vert^2 - \Vert x\Vert\,|x-x'|_1 \,\geq\, (1-3\delta)\,\Vert x\Vert^2.$$
To conclude, note that for $y\in E$ with $\Vert y\Vert=1$, the diameter of the set
$$\big\{ y'\in E,\quad 1-3\delta \,\leq\, \langle y,y'\rangle \,\leq\, \Vert y'\Vert \,\leq\, 1+2\delta\big\} $$
uniformly tends to~$0$ with~$\delta$.
\end{proof}

\medskip

\begin{proof}[Proof of Proposition~\ref{mu d'un produit et somme des mu}]
Fix~$\varepsilon\in ]0,1]$.
For $\delta\in ]0,1]$, let $F'_{\delta}$, $\mathcal{U}'_{\delta}$, and~$C'_{\delta}$ be given by Proposition~\ref{lemme majoration E_Delta_L}.
Let $\gamma\in\Gamma$.
By Proposition~\ref{lemme majoration E_Delta_L}, we may write $\gamma=\gamma_0\ldots\gamma_n$ for some $\gamma_0,\ldots,\gamma_n\in F'_{\delta}$ such that
\begin{enumerate}
  \item $n\leq\delta\sum_{i=0}^n \Vert\mu(\gamma_i)\Vert$,
	\item $\sum_{i=1}^n \Vert\mu(\gamma_i)\Vert = \Vert\sum_{i=1}^n \mu(\gamma_i)\Vert$,
	\item $\Vert\mu(\varphi(\gamma_i))-\mu(\gamma_i)\Vert\leq 1$ for all~$\varphi\in\mathcal{U}'_{\delta}$ and $0\leq i\leq n$,
	\item for all~$\varphi\in\mathcal{U}'_{\delta}$,
	$$\Big|\mu(\varphi(\gamma)) - \sum_{i=0}^n \mu(\varphi(\gamma_i))\Big|_{E_{\Delta_L}} \leq \delta\,\Big(\sum_{i=0}^n \Vert\mu(\gamma_i)\Vert\Big) + C'_{\delta}.$$
\end{enumerate}
Conditions (1), (3), and~(4), together with~(\ref{inegalite triangulaire pour mu}) and the triangular inequality, imply that for all~$\varphi\in\mathcal{U}'_{\delta}$,
\begin{eqnarray*}
\Big|\mu(\varphi(\gamma)) - \sum_{i=0}^n \mu(\gamma_i)\Big|_{E_{\Delta_L}} & \leq & \Big|\mu(\varphi(\gamma)) - \sum_{i=0}^n \mu(\varphi(\gamma_i))\Big|_{E_{\Delta_L}} + \sum_{i=0}^n \Vert\mu(\varphi(\gamma_i))-\mu(\gamma_i)\Vert\\
& \leq & 2\delta\,\Big(\sum_{i=0}^n \Vert\mu(\gamma_i)\Vert\Big) + C'_{\delta} + 1
\end{eqnarray*}
and
$$\Vert\mu(\varphi(\gamma))\Vert \,\leq\, \sum_{i=0}^n \Vert\mu(\varphi(\gamma_i))\Vert \,\leq\, (1+\delta)\,\Big(\sum_{i=0}^n \Vert\mu(\gamma_i)\Vert\Big) + 1.$$
Moreover, Condition~(2) implies that
\begin{equation}\label{norme de la somme des mu(gamma_i)}
\sum_{i=0}^n \Vert\mu(\gamma_i)\Vert \,\leq\, \Big\Vert\sum_{i=0}^n \mu(\gamma_i)\Big\Vert + 2\,\max_{g\in F'_{\delta}} \Vert\mu(g)\Vert.
\end{equation}
Therefore, for $\varphi\in\mathcal{U}'_{\delta}$, Lemma~\ref{remarque formelle} applies to
\begin{eqnarray*}
E_1 & = & E_{\Delta_L},\\
C''_{\delta} & = & C'_{\delta} + 1 + 4\max_{g\in F'_{\delta}} \Vert\mu(g)\Vert,\\
x & = & \sum_{i=0}^n \mu(\gamma_i),\\
x' & = & \mu(\varphi(\gamma)):
\end{eqnarray*}
we obtain that if $\delta$ is small enough, then
\begin{equation}\label{mu(varphi(gamma)) et somme des mu(gamma_i)}
\Big\Vert\mu(\varphi(\gamma)) - \sum_{i=0}^n \mu(\gamma_i)\Big\Vert \,\leq\, \frac{\varepsilon}{4}\ \Big\Vert\sum_{i=0}^n \mu(\gamma_i)\Big\Vert + \frac{4C''_{\delta}}{\delta}
\end{equation}
for all $\varphi\in\mathcal{U}'_{\delta}$.
Now Conditions (1) and~(3), together with the triangular inequality and~(\ref{norme de la somme des mu(gamma_i)}), imply that
\begin{eqnarray*}
\Big\Vert\Big(\sum_{i=0}^n \mu(\varphi(\gamma_i))\Big) - \Big(\sum_{i=0}^n \mu(\gamma_i)\Big)\Big\Vert & \leq & \delta\,\Big(\sum_{i=0}^n \Vert\mu(\gamma_i)\Vert\Big) + 1,\\
& \leq & \delta\,\Big\Vert\sum_{i=0}^n \mu(\gamma_i)\Big\Vert + 2\delta\,\max_{g\in F'_{\delta}} \Vert\mu(g)\Vert + 1.
\end{eqnarray*}
Therefore, if $\delta$ is small enough, then
$$\Big\Vert\mu(\varphi(\gamma)) - \sum_{i=0}^n \mu(\varphi(\gamma_i))\Big\Vert \,\leq\, \frac{\varepsilon}{2}\ \Big\Vert\sum_{i=0}^n \mu(\gamma_i)\Big\Vert + C'''_{\delta}$$
for all~$\varphi\in\mathcal{U}'_{\delta}$, where
$$C'''_{\delta} = \frac{4C''_{\delta}}{\delta} + 2\delta\,\max_{g\in F'_{\delta}} \Vert\mu(g)\Vert + 1.$$
In particular, taking $\varphi$ to be the natural inclusion of~$\Gamma$ in~$G$, we get
\begin{equation}\label{mu(gamma) et somme des mu(gamma_i)}
\Big\Vert\sum_{i=0}^n \mu(\gamma_i)\Big\Vert \,\leq\, \frac{1}{1-\frac{\varepsilon}{2}}\,\big(\Vert\mu(\gamma)\Vert + C'''_{\delta}\big) \,\leq\, 2\,\Vert\mu(\gamma)\Vert + 2C'''_{\delta},
\end{equation}
hence
$$\Big\Vert\mu(\varphi(\gamma)) - \sum_{i=0}^n \mu(\varphi(\gamma_i))\Big\Vert \leq \varepsilon\,\Vert\mu(\gamma)\Vert + 3C'''_{\delta}$$
for all~$\varphi\in\mathcal{U}'_{\delta}$ whenever $\delta$ is small enough.
Finally, Condition~(1), together with (\ref{norme de la somme des mu(gamma_i)}) and~(\ref{mu(gamma) et somme des mu(gamma_i)}), implies that
$$n \,\leq\, 2\delta\,\Vert\mu(\gamma)\Vert + 3C'''_{\delta}.$$
Thus the triple $(F_{\varepsilon},\mathcal{U}_{\varepsilon},C_{\varepsilon})=(F'_{\delta},\mathcal{U}'_{\delta},3C'''_{\delta})$ satisfies the conclusions of Proposition~\ref{mu d'un produit et somme des mu} for $\delta$ small enough.
\end{proof}

\subsection{Properness and deformation}\label{Proprete and deformations}

Let us now briefly explain how to deduce Theorems~\ref{proprete, groupes de Lie} and~\ref{proprete, groupes algebriques} from Theorem~\ref{varphi ne change pas beaucoup mu}.

Theorem~\ref{proprete, groupes algebriques} follows from Theorem~\ref{varphi ne change pas beaucoup mu} and from the \emph{properness criterion} of Benoist (\cite{ben96}, Cor.~5.2) and Kobayashi (\cite{kob96}, Th.~1.1).
Under the assumptions of Theorem~\ref{proprete, groupes algebriques}, this criterion states that a subgroup~$\Gamma$ of~$G$ acts properly on~$G/H$ if and only if the set $\mu(\Gamma)\cap(\mu(H)+\mathcal{C})$ is bounded for any compact subset~$\mathcal{C}$ of~$E$.
This condition means that the set~$\mu(\Gamma)$ ``gets away from~$\mu(H)$ at infinity''.

\begin{proof}[Proof of Theorem~\ref{proprete, groupes algebriques}]
We may assume that $\mathbf{G}$, $\mathbf{H}$, and $\mathbf{L}$ are all connected.
Let $G=KA^+K$ or $G=KZ^+K$ be a Cartan decomposition of~$G$, and let $\mu : G\rightarrow E^+$ be the corresponding Cartan projection.
Endow~$E$ with a \linebreak $W$-invariant norm~$\Vert\cdot\Vert$ as in Section~\ref{Preliminaires}.
By Remark~\ref{decompositions de Cartan compatibles}, we may assume that $L$ admits a Cartan decomposition $L=K_LA_L^+K_L$ or $L=K_LZ_L^+K_L$ with $K_L\subset K$ and $\mathbf{A_L}\subset\mathbf{A}$.
If $\mu_L : L\rightarrow E_L^+$ denotes the corresponding Cartan projection, then $E_L$ is naturally seen as a line in~$E$ and
$$\mu(\ell) = E^+\cap (W\cdot\mu_L(\ell))$$
for all~$\ell\in L$.
Thus $\mu(L)$ is contained in the union~$U_L$ of two half-lines of~$E^+$.
Using Remark~\ref{decompositions de Cartan compatibles} again, there is an element~$g\in G$ such that $gHg^{-1}$ admits a Cartan decomposition $gHg^{-1}=K_HA_H^+K_H$ or $gHg^{-1}=K_HZ_H^+K_H$ with $K_H\subset K$ and $\mathbf{A_H}\subset\mathbf{A}$.
The set $\mu(gHg^{-1})$ is contained in a finite union~$U_H$ of subspaces of~$E$ intersected with~$E^+$, parametrized by the Weyl group~$W$.
By~(\ref{inegalite fine pour mu}), the Hausdorff distance between $\mu(gHg^{-1})$ and~$\mu(H)$ is $\leq 2\,\Vert\mu(g)\Vert$.
Therefore $U_L\cap U_H=\{ 0\} $ by the properness criterion, and there is a constant $\varepsilon>0$ such that
$$d(\mu(\ell),\mu(H))\geq 2\varepsilon\,\Vert\mu(\ell)\Vert-2\,\Vert\mu(g)\Vert$$
for all~$\ell\in L$.
By Theorem~\ref{varphi ne change pas beaucoup mu}, there is a neighborhood $\mathcal{U}_{\varepsilon}\subset\Hom(\Gamma,G)$ of the natural inclusion and a constant $C_{\varepsilon}\geq 0$ such that
$$\Vert\mu(\varphi(\gamma))-\mu(\gamma)\Vert \leq \varepsilon\,\Vert\mu(\gamma)\Vert + C_{\varepsilon}$$
for all $\varphi\in\mathcal{U}_{\varepsilon}$ and $\gamma\in\Gamma$.
Fix $\varphi\in\mathcal{U}_{\varepsilon}$.
For all~$\gamma\in\Gamma$,
\begin{eqnarray*}
d(\mu(\varphi(\gamma)),\mu(H)) & \geq & d(\mu(\gamma),\mu(H)) - \Vert\mu(\varphi(\gamma))-\mu(\gamma)\Vert\\
& \geq & \varepsilon\,\Vert\mu(\gamma)\Vert - C_{\varepsilon} - 2\,\Vert\mu(g)\Vert.
\end{eqnarray*}
Therefore, using the fact that $\Gamma$ is discrete in~$G$ and $\mu$ is a proper map, we get that $\mu(\varphi(\Gamma))\cap(\mu(H)+\mathcal{C})$ is finite for any compact subset~$\mathcal{C}$ of~$E$.
By the properness criterion, this implies that $\varphi(\Gamma)$ acts properly on~$G/H$.
It also implies that $\varphi(\Gamma)$ is discrete in~$G$ and that the kernel of~$\varphi$ is finite.
Since $\Gamma$ is torsion-free, $\varphi$ is injective.
\end{proof}

\begin{proof}[Proof of Theorem~\ref{proprete, groupes de Lie}]
By Theorem~\ref{proprete, groupes algebriques}, there is a neighborhood $\mathcal{U}\subset\linebreak\Hom(\Gamma,G)$ of the natural inclusion such that any $\varphi\in\mathcal{U}$ is injective and $\varphi(\Gamma)$ is discrete in~$G$, acting properly discontinuously on~$G/H$.
Since $\varphi\in\mathcal{U}$ is injective, $\varphi(\Gamma)$ has the same cohomological dimension as~$\Gamma$.
We conclude using the fact, due to Kobayashi (\cite{kob89}, Cor.~5.5), that a torsion-free discrete subgroup of~$G$ acts cocompactly sur~$G/H$ if and only if its cohomological dimension is $d(G)-d(H)$, where $d(G)$ (resp.~$d(H)$) denotes the dimension of the symmetric space of~$G$ (resp.\ of~$H$).
\end{proof}

\section{Application to the compact quotients of $\SO(2n,2)/\U(n,1)$}\label{quotients compacts Zariski-denses}

Fix an integer~$n\geq 1$.
Note that $\U(n,1)$ naturally embeds in~$\SO(2n,2)$ by identifying the Hermitian form $|z_1|^2+\ldots+|z_n|^2-|z_{n+1}|^2$ on~$\C^{n+1}$ with the quadratic form $x_1^2+\ldots+x_{2n}^2-x_{2n+1}^2-x_{2n+2}^2$ on~$\R^{2n+2}$.
As Kulkarni~\cite{kul} pointed out, $\U(n,1)$, seen as a subgroup of~$\SO(2n,2)$, acts transitively on the anti-de Sitter space
\begin{eqnarray*}
\mathrm{AdS}^{2n+1} & = & \big\{ (x_1,\ldots,x_{2n+2})\in\R^{2n+2},\ x_1^2 + \ldots + x_{2n}^2 - x_{2n+1}^2 - x_{2n+2}^2 = -1\big\} \\
& \simeq & \SO(2n,2)/\SO(2n,1).
\end{eqnarray*}
The stabilizer of $(0,\ldots,0,1)$ is the compact subgroup~$\U(n)$, hence $\mathrm{AdS}^{2n+1}$ identifies with $\U(n,1)/\U(n)$ and the action of~$\U(n,1)$ on $\SO(2n,2)/\SO(2n,1)$ is proper.
By duality, the action of $\SO(2n,1)$ on $\SO(2n,2)/\U(n,1)$ is proper and transitive.
In particular, any uniform lattice~$\Gamma$ of~$\SO(2n,1)$ provides a standard compact quotient $\Gamma\backslash\SO(2n,2)/\U(n,1)$ of $\SO(2n,2)/\U(n,1)$.

Corollary~\ref{quotients compacts de SO(2n,2)/SU(n,1)} follows from Theorem~\ref{proprete, groupes de Lie} and from the existence of Zariski-dense deformations in~$\SO(m,2)$ of certain uniform lattices of~$\SO(m,1)$.
Such deformations can be obtained by a bending construction due to Johnson and Millson.
This construction is presented in~\cite{jm} for deformations in \linebreak $\SO(m+1,1)$ or in~$\PGL_{m+1}(\R)$, but not in~$\SO(m,2)$.
For the reader's convenience, we shall describe Johnson and Millson's construction in the latter case, and check that the deformations obtained is this way are indeed Zariski-dense in~$\SO(m,2)$.

From now on we use Gothic letters to denote the Lie algebras of real Lie groups (\textit{e.g.} $\g$ for~$G$).

\subsection{Uniform arithmetic lattices of~$\SO(m,1)$}

Fix~$m\geq 2$.
The uniform lattices of~$\SO(m,1)$ considered by Johnson and Millson are obtained in the following classical way.
Fix a square-free integer~$r\geq 2$ and identify $\SO(m,1)$ with the special orthogonal group of the quadratic form
$$x_1^2+\ldots+x_m^2-\sqrt{r}x_{m+1}^2$$
on~$\R^{m+1}$.
Let $\mathcal{O}_r$ denote the ring of integers of the quadratic field~$\Q(\sqrt{r})$.
The group $\Gamma=\SO(m,1)\cap\M_{m+1}(\mathcal{O}_r)$ is a uniform lattice in~$\SO(m,1)$ (see \cite{bor63} for instance).
For any ideal~$I$ of~$\mathcal{O}_r$, the congruence subgroup $\Gamma\cap (1+\M_{m+1}(I))$ has finite index in~$\Gamma$, hence is a uniform lattice in~$\SO(m,1)$.
By~\cite{mr}, after replacing~$\Gamma$ by such a congruence subgroup, we may assume that it is torsion-free.
Then $M=\Gamma\backslash\mathbb{H}^m$ is a $m$-dimensional compact hyperbolic manifold whose fundamental group identifies with~$\Gamma$.
By~\cite{jm}, Lem.~7.1 \& Th.~7.2, after possibly replacing~$\Gamma$ again by some congruence subgroup, we may assume that $N=\Gamma_0\backslash\HH^{m-1}$ is a connected, orientable, totally geodesic hypersurface of~$M$, where
$$\Gamma_0=\Gamma\cap\SO(m-1,1)$$
and where
$$\HH^{m-1} \simeq \big\{ (x_2,\ldots,x_{m+1})\in\R^m,\ x_2^2+\ldots+x_m^2-\sqrt{r}x_{m+1}^2=-1\ \mathrm{and}\ x_{m+1}>0\big\} $$
is embedded in
$$\HH^m \simeq \{ (x_1,\ldots,x_{m+1})\in\R^{m+1},\ x_1^2+\ldots+x_m^2-\sqrt{r}x_{m+1}^2=-1\ \mathrm{and}\ x_{m+1}>0\} $$
in the natural way.
Since the centralizer of~$\Gamma_0$ in~$\SO(m,2)$ contains a subgroup isomorphic to~$\SO(1,1)\simeq\R^{\ast}$, the idea of the bending construction is to deform~$\Gamma$ ``along this centralizer'', as we shall now explain.

\subsection{Deformations in the separating case}

Assume that $N$ separates~$M$ into two components $M_1$ and~$M_2$, and let $\Gamma_1$ (resp.~$\Gamma_2$) denote the fundamental group of~$M_1$ (resp.\ of~$M_2$).
By van Kampen's theorem, $\Gamma$ is the amalgamated product $\Gamma_1\ast_{\Gamma_0}\Gamma_2$.
Fix an element $Y\in\so(m,2)\smallsetminus\so(m,1)$ that belongs to the Lie algebra of the centralizer of~$\Gamma_0$ in~$\SO(m,2)$.
Following Johnson and Millson, we consider the deformations of~$\Gamma$ in~$\SO(m,2)$ that are given, for $t\in\R$, by
\begin{equation*}
  \varphi_t(\gamma) =
     \begin{cases}
        \ \ \,\,\,\,\,\gamma & \text{for $\gamma\in\Gamma_1$,}\\
        e^{tY}\gamma e^{-tY} & \text{for $\gamma\in\Gamma_2$.}
     \end{cases}
\end{equation*}
Note that $\varphi_t : \Gamma\rightarrow\SO(m,2)$ is well defined since $e^{tY}$ centralizes~$\Gamma_0$.
Moreover, it is injective and $\varphi_t(\Gamma)$ is discrete in~$\SO(m,2)$.
We now check Zariski-density.

\begin{lem}\label{lemme cas separant}
For~$t\neq 0$ small enough, $\varphi_t(\Gamma)$ is Zariski-dense in~$\SO(m,2)$.
\end{lem}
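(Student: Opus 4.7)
The plan is to let $H_t \subset \SO(m,2)$ denote the Zariski closure of $\varphi_t(\Gamma)$, and to show that, for $t$ small and nonzero, the Lie algebra $\mathfrak{h}_t$ of~$H_t$ equals all of $\so(m,2)$. Once this is established, $H_t$ will contain $\SO(m,2)^{\circ}$, and since $\varphi_t(\Gamma) \supset \Gamma_1$ already meets whichever non-identity components of $\SO(m,2)$ that $\Gamma$ meets inside $\SO(m,1)$, we conclude $H_t = \SO(m,2)$.

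The main step, and the principal obstacle, is to prove that for $i \in \{1,2\}$ the Zariski closure $\overline{\Gamma_i}^{\mathrm{Zar}}$ of $\Gamma_i$ in $\SO(m,1)$ contains $\SO(m,1)^{\circ}$. Since $\Gamma_0 \subset \Gamma_i$ is a uniform lattice in $\SO(m-1,1)$, Borel's density theorem gives $\mathrm{Lie}(\overline{\Gamma_i}^{\mathrm{Zar}}) \supseteq \so(m-1,1)$. Now $\so(m-1,1)$ is maximal in $\so(m,1)$: it is the stabilizer of a spacelike vector in the standard representation, and the quotient $\so(m,1)/\so(m-1,1) \cong \R^{m-1,1}$ is irreducible as an $\so(m-1,1)$-module (for $m \geq 2$). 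So $\mathrm{Lie}(\overline{\Gamma_i}^{\mathrm{Zar}})$ equals either $\so(m-1,1)$ or $\so(m,1)$. In the first case, $\Gamma_i$ would be virtually contained in the normalizer of $\SO(m-1,1)^{\circ}$ in $\SO(m,1)$, forcing $\Gamma_0$ to have finite index in $\Gamma_i$; this is ruled out geometrically, since the compact hyperbolic $m$-manifold $M_i$ with totally geodesic boundary $N$ has strictly positive volume, hence is not a collar on~$N$, so $\pi_1(N) \hookrightarrow \pi_1(M_i)$ has infinite index.

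Granted this, the Zariski closure $H_t$ contains $\varphi_t(\Gamma_1) = \Gamma_1$ and $\varphi_t(\Gamma_2) = e^{tY} \Gamma_2 e^{-tY}$, so
\[
\mathfrak{h}_t \,\supseteq\, \so(m,1) + \mathrm{Ad}(e^{tY})\bigl(\so(m,1)\bigr).
\]
Exactly the same irreducibility argument applied to the $\so(m,1)$-module $\so(m,2)/\so(m,1) \cong \R^{m,1}$ shows that $\so(m,1)$ is both self-normalizing and maximal in $\so(m,2)$. Since $Y \notin \so(m,1)$, self-normalizingness gives $\mathrm{Ad}(e^{tY})(\so(m,1)) \neq \so(m,1)$ for small $t \neq 0$, so the sum above strictly contains $\so(m,1)$; by maximality it then equals $\so(m,2)$, completing the proof.
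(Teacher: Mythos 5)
Your strategy is the same as the paper's: show the Zariski closure $H_t$ of $\varphi_t(\Gamma)$ contains both $\SO(m,1)$ and $e^{tY}\SO(m,1)e^{-tY}$, then use that $\so(m,1)$ is maximal and self-normalizing in $\so(m,2)$ (the paper's Remark~\ref{remarque algebres de Lie}, which you re-derive via the irreducibility of the $\so(m,1)$-module $\so(m,2)/\so(m,1)$) to conclude $\mathfrak{h}_t=\so(m,2)$. The final contradiction ($Y\notin\so(m,1)$ forces $\mathrm{Ad}(e^{tY})\so(m,1)\neq\so(m,1)$ for small $t\neq 0$) is the paper's exponential-map argument, just compressed.

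Two issues. First, the component discussion at the start is unnecessary and slightly confused: $\SO(m,2)$ is Zariski-connected as an algebraic group (the paper says this up front), so once $\mathfrak{h}_t=\so(m,2)$ you are done; there is no extra work matching non-identity components. Second, and more seriously, where the paper simply cites~\cite{jm}, Lem.~5.9 for the Zariski density of $\Gamma_1,\Gamma_2$ in $\SO(m,1)$, you attempt a self-contained proof, and it has a real gap at $m=2$. For $m=2$ the group $\SO(1,1)$ is abelian, so Borel density does not apply in the form you invoke, and $\so(1,1)\subset\so(2,1)\simeq\mathfrak{sl}_2(\R)$ is \emph{not} maximal (it sits inside a Borel subalgebra), so the dichotomy ``equals $\so(m-1,1)$ or $\so(m,1)$'' fails. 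Since the lemma and Corollary~\ref{quotients compacts de SO(2n,2)/SU(n,1)} are stated for all $m\geq 2$ (i.e.\ $n\geq 1$), this case matters. Also, ``positive volume, hence not a collar'' is not a valid implication as written --- a collar $N\times[0,1]$ also has positive volume; the argument you want is that $\overline{M_i}$ is its own convex core and is $m$-dimensional, so the limit set of $\Gamma_i$ cannot lie in a codimension-one sphere, hence $\Gamma_i$ is not virtually contained in a conjugate of $\SO(m-1,1)$. The cleanest fix is to do what the paper does and cite~\cite{jm}.
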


We need the following remark.

\begin{rema}\label{remarque algebres de Lie}
For~$m\geq 2$, the only Lie subalgebra of~$\so(m,2)$ that strictly contains~$\so(m,1)$ is~$\so(m,2)$.
\end{rema}

\noindent
Indeed, $\so(m,2)$ decomposes uniquely into a direct sum $\so(m,1)\oplus\nolinebreak\R^{m+1}$ of irreducible $\SO(m,1)$-modules, where $\SO(m,1)$ acts on~$\so(m,1)$ (resp.\ on~$\R^{m+1}$) by the adjoint (resp.\ natural) action.

\begin{proof}[Proof of Lemma~\ref{lemme cas separant}]
Recall that~$\SO(m,2)$ is Zariski-connected.
Therefore, in order to prove that $\varphi_t(\Gamma)$ is Zariski-dense in~$\SO(m,2)$, it is sufficient to prove that the Lie algebra of~$\overline{\varphi_t(\Gamma)}$ is~$\so(m,2)$, where $\overline{\varphi_t(\Gamma)}$ denotes the Zariski closure of~$\varphi_t(\Gamma)$ in~$\SO(m,2)$.

By~\cite{jm}, Lem.~5.9, the groups $\Gamma_1$ and~$\Gamma_2$ are Zariski-dense in~$\SO(m,1)$.
By \cite{jm}, Cor.~5.3, and \cite{ser77}, \S~I.5.2, Cor.~1, they naturally embed in~$\Gamma$.
Therefore $\overline{\varphi_t(\Gamma)}$ contains both $\SO(m,1)$ and~$e^{tY}\SO(m,1) e^{-tY}$, and the Lie algebra of~$\overline{\varphi_t(\Gamma)}$ contains both $\so(m,1)$ and the Lie algebra of $e^{tY}\SO(m,1) e^{-tY}$.
By Remark~\ref{remarque algebres de Lie}, in order to prove that $\varphi_t(\Gamma)$ is Zariski-dense in~$\SO(m,2)$, it is sufficient to prove that the Lie algebra of $e^{tY}\SO(m,1) e^{-tY}$ is not~$\so(m,1)$.

But if the Lie algebra of $e^{tY}\SO(m,1) e^{-tY}$ were~$\so(m,1)$, then we would have $e^{tY}\SO(m,1)^{\circ}e^{tY}=\SO(m,1)^{\circ}$, \textit{i.e.}, $e^{tY}$ would belong to the normalizer $N_{\SO(m,2)}(\SO(m,1)^{\circ})$ of the identity component~$\SO(m,1)^{\circ}$ of~$\SO(m,1)$.
Recall that the exponential map induces a diffeomorphism between a neighborhood~$\mathcal{U}$ of~$0$ in~$\so(m,2)$ and a neighborhood~$\mathcal{V}$ of~$1$ in~$\SO(m,2)$, which itself induces a one-to-one correspondence between $\mathcal{U}\cap\n_{\so(m,2)}(\so(m,1))$ and $\mathcal{V}\cap N_{\SO(m,2)}(\SO(m,1)^{\circ})$.
Therefore, if we had $e^{tY}\in N_{\SO(m,2)}(\SO(m,1)^{\circ})$ for some $t\neq 0$ small enough, then we would have
$$Y \in \n_{\so(m,2)}(\so(m,1)) = \{ X\in\so(m,2),\ \ad(X)(\so(m,1))=\so(m,1)\} .$$
But Remark~\ref{remarque algebres de Lie} implies that $\n_{\so(m,2)}(\so(m,1))$ is equal to~$\so(m,1)$, since it contains~$\so(m,1)$ and is different from~$\so(m,2)$.
Thus we would have $Y\in\so(m,1)$, which would contradict our choice of~$Y$.
\end{proof}

\subsection{Deformations in the nonseparating case}

We now assume that $S=M\smallsetminus N$ is connected.
Let $j_1:\Gamma_0\rightarrow\pi_1(S)$ and $j_2:\Gamma_0\rightarrow\pi_1(S)$ denote the inclusions in~$\pi_1(S)$ of the fundamental groups of the two sides of~$N$.
The group~$\Gamma$ is a HNN extension of~$\pi_1(S)$, \textit{i.e.}, it is generated by~$\pi_1(S)$ and by some element $\nu\in\Gamma$ such that
$$\nu\,j_1(\gamma)\,\nu^{-1} = j_2(\gamma)$$
for all $\gamma\in\Gamma_0$.
Fix an element $Y\in\so(m,2)\smallsetminus\so(m,1)$ that belongs to the Lie algebra of the centralizer of~$j_1(\Gamma_0)$ in~$\SO(m,2)$.
Following Johnson and Millson, we consider the deformations of~$\Gamma$ in~$\SO(m,2)$ that are given, for $t\in\R$, by
$$\left \{
\begin{array}{c @{\ =\ } l l}
    \varphi_t(\gamma) & \ \,\gamma & \mathrm{for}\ \gamma\in\pi_1(S),\\
    \varphi_t(\nu) & \nu e^{tY}.
\end{array}
\right.$$
Note that $\varphi_t : \Gamma\rightarrow\SO(m,2)$ is well defined since $e^{tY}$ centralizes~$j_1(\Gamma_0)$.
Moreover, it is injective and $\varphi_t(\Gamma)$ is discrete in~$\SO(m,2)$.

\begin{lem}
For $t\neq 0$ small enough, $\varphi_t(\Gamma)$ is Zariski-dense in~$\SO(m,2)$.
\end{lem}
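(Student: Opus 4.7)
The plan is to reduce to essentially the same argument as in the separating case (Lemma~\ref{lemme cas separant}), by observing that $e^{tY}$ itself lies in the Zariski closure $\overline{\varphi_t(\Gamma)}$.

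First, I would establish that $\pi_1(S)$, viewed as a subgroup of~$\Gamma\subset\SO(m,1)$ via the identification coming from $S\subset M$, is Zariski-dense in~$\SO(m,1)$. This follows by the same argument as in \cite{jm}, Lem.~5.9 invoked in the separating case. Since $\varphi_t$ restricts to the identity on~$\pi_1(S)$, the Zariski closure $\overline{\varphi_t(\Gamma)}$ therefore contains $\SO(m,1)$. Next, note that $\nu\in\Gamma\subset\SO(m,1)\subset\overline{\varphi_t(\Gamma)}$, so
$$e^{tY} \,=\, \nu^{-1}\,\varphi_t(\nu) \,\in\, \overline{\varphi_t(\Gamma)}.$$
Thus $\overline{\varphi_t(\Gamma)}$ contains both $\SO(m,1)$ and $e^{tY}$.

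Let $\mathfrak{h}$ denote the Lie algebra of~$\overline{\varphi_t(\Gamma)}$, which contains~$\so(m,1)$. By Remark~\ref{remarque algebres de Lie}, either $\mathfrak{h}=\so(m,2)$---in which case $\overline{\varphi_t(\Gamma)}=\SO(m,2)$ by Zariski connectedness of~$\SO(m,2)$, and we are done---or $\mathfrak{h}=\so(m,1)$. In the latter case, the identity component of~$\overline{\varphi_t(\Gamma)}$ is~$\SO(m,1)^{\circ}$, and since this identity component is normal in $\overline{\varphi_t(\Gamma)}$, the element $e^{tY}$ lies in $N_{\SO(m,2)}(\SO(m,1)^{\circ})$.

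Now the final step is identical to the end of the proof of Lemma~\ref{lemme cas separant}: the exponential map being a local diffeomorphism identifying the normalizer of~$\SO(m,1)^{\circ}$ with~$\n_{\so(m,2)}(\so(m,1))$ near the identity, for $t\neq 0$ small enough we would deduce $Y\in\n_{\so(m,2)}(\so(m,1))$. But Remark~\ref{remarque algebres de Lie} gives $\n_{\so(m,2)}(\so(m,1))=\so(m,1)$, contradicting our choice of~$Y\in\so(m,2)\smallsetminus\so(m,1)$. The only potential obstacle is verifying the Zariski-density of $\pi_1(S)$ in $\SO(m,1)$; once that is in hand, the argument is a routine variant of the separating case.
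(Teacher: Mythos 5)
Your proof is correct and takes essentially the same route as the paper: both show that $\SO(m,1)$ and $e^{tY}$ lie in $\overline{\varphi_t(\Gamma)}$ via Zariski-density of $\pi_1(S)$ (from \cite{jm}, Lem.~5.9) and the relation $e^{tY}=\nu^{-1}\varphi_t(\nu)$, and both finish with the exponential-map argument near the identity. The only stylistic difference is that the paper then observes $e^{tY}\SO(m,1)e^{-tY}\subset\overline{\varphi_t(\Gamma)}$ and refers back verbatim to the end of Lemma~\ref{lemme cas separant}, whereas you invoke normality of the identity component of $\overline{\varphi_t(\Gamma)}$ directly to conclude $e^{tY}\in N_{\SO(m,2)}(\SO(m,1)^{\circ})$ --- a marginally cleaner phrasing of the same contradiction.
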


\begin{proof}
Let $\overline{\varphi_t(\Gamma)}$ denote the Zariski closure of~$\varphi_t(\Gamma)$ in~$\SO(m,2)$.
By~\cite{jm}, Lem.~5.9, the group~$\pi_1(S)$ is Zariski-dense in~$\SO(m,1)$, hence $\overline{\varphi_t(\Gamma)}$ contains both~$\SO(m,1)$ and~$\nu e^{tY}$.
But $\nu\in\SO(m,1)$, hence $e^{tY}\in\overline{\varphi_t(\Gamma)}$.
Therefore $\overline{\varphi_t(\Gamma)}$ contains both $\SO(m,1)$ and $e^{tY}\SO(m,1)e^{-tY}$, and we may conclude as in the proof of Lemma~\ref{lemme cas separant}.
\end{proof}


\vspace{0.5cm}

\end{document}